\icmltitlerunning{Safeguarded Stochastic Polyak Step Sizes for Non-smooth Optimization}
\newcommand{\R}{\mathbb{R}}
\newcommand{\E}{\mathbb{E}}
\definecolor{bgcolor}{rgb}{0.8,1,1}
\definecolor{bgcolor2}{rgb}{0.8,1,0.8}
\definecolor{niceblue}{rgb}{0.0,0.19,0.56}
\definecolor{shadecolor}{gray}{0.9}
\declaretheoremstyle[
headfont=\normalfont\bfseries,
notefont=\mdseries, notebraces={(}{)},
bodyfont=\normalfont,
postheadspace=0.5em,
spaceabove=1pt,
mdframed={
  skipabove=8pt,
  skipbelow=8pt,
  hidealllines=true,
  backgroundcolor={shadecolor},
  innerleftmargin=4pt,
  innerrightmargin=4pt}
]{shaded}
\declaretheorem[style=shaded,within=section]{definition}
\declaretheorem[style=shaded,sibling=definition]{theorem}
\declaretheorem[style=shaded,sibling=definition]{proposition}
\declaretheorem[style=shaded,sibling=definition]{corollary}
\declaretheorem[style=shaded,sibling=definition]{lemma}
\declaretheorem[style=shaded,sibling=definition]{remark}
\newcommand{\cmark}{\ding{51}}%
\newcommand{\xmark}{\ding{55}}%
\renewcommand{\b}{\beta}
\newcommand{\g}{\gamma}
\renewcommand{\d}{\delta}
\newcommand{\h}{\eta}
\renewcommand{\l}{\lambda}
\newcommand{\s}{\sigma}
\renewcommand{\t}{\tau}
\newcommand{\D}{\Delta}
\newcommand{\tn}{\textnormal}
\newcommand{\q}{\quad}
\newcommand\numberthis{\addtocounter{equation}{1}\tag{\theequation}}
\begin{document}

\twocolumn[
  \icmltitle{Safeguarded Stochastic Polyak Step Sizes for Non-smooth Optimization:\\ Robust Performance Without Small (Sub)Gradients}
  \icmlsetsymbol{equal}{*}

  \begin{icmlauthorlist}
    \icmlauthor{Dimitris Oikonomou}{minds,cs}
    \icmlauthor{Nicolas Loizou}{minds,ams}
  \end{icmlauthorlist}

  \icmlaffiliation{minds}{Mathematical Institute for Data Science (MINDS), Johns Hopkins University, Baltimore, MD, USA}
  \icmlaffiliation{cs}{Department of Computer Science, Johns Hopkins University, Baltimore, MD, USA}
  \icmlaffiliation{ams}{Department of Applied Mathematics and Statistics, Johns Hopkins University, Baltimore, MD, USA}

  \icmlcorrespondingauthor{Dimitris Oikonomou, Nicolas Loizou}{doikono1@jh.edu, nloizou@jhu.edu}

  % You may provide any keywords that you find helpful for describing your
  % paper; these are used to populate the "keywords" metadata in the PDF but
  % will not be shown in the document
  % \icmlkeywords{Machine Learning, ICML}

  \vskip 0.3in
]

% Use ONE of the following lines. DO NOT remove the command.
% If you have no special notice, KEEP empty braces:
\printAffiliationsAndNotice{}  % no special notice (required even if empty)
% Or, if applicable, use the standard equal contribution text:
% \printAffiliationsAndNotice{\icmlEqualContribution}

\begin{abstract}
    The stochastic Polyak step size (SPS) has proven to be a promising choice for stochastic gradient descent (SGD), delivering competitive performance relative to state-of-the-art methods on smooth convex and non-convex optimization problems, including deep neural network training. However, extensions of this approach to non-smooth settings remain in their early stages, often relying on interpolation assumptions or requiring knowledge of the optimal solution. In this work, we propose a novel SPS variant, Safeguarded SPS (SPS$_{safe}$), for the stochastic subgradient method, and provide rigorous convergence guarantees for non-smooth convex optimization with no need for strong assumptions. We further incorporate momentum into the update rule, yielding equally tight theoretical results. Comprehensive experiments on convex benchmarks and deep neural networks corroborate our theory: the proposed step size achieves competitive performance to existing adaptive baselines and exhibits stable behavior across a wide range of problem settings. Finally, in the context of deep neural network training, the gradient norms under our step size do not collapse to (near) zero, indicating robustness to vanishing gradients. 
\end{abstract}

\section{Introduction}
\label{sec:intro}

Adaptive optimization methods have become fundamental tools in machine learning, offering robustness and eliminating the need for manual learning rate tuning. Among the most prominent are AdaGrad and Adam. AdaGrad \citep{duchi2011adaptive, choudhury2024remove} adapts the learning rate individually for each parameter by accumulating past squared gradients, making it well-suited for sparse features but often suffering from decreasing learning rates over time. Adam \citep{kingma2014adam} extends this idea by maintaining exponential moving averages of both the gradient and its squared values, and correcting for initialization bias. As a result, Adam has demonstrated strong empirical performance and has become a standard optimizer in deep learning applications due to its stability and ease of use, \citep{vaswani2017attention,guo2022attention,peebles2023scalable}.

In a different direction, adaptive optimization algorithms using Polyak-type step sizes have started gaining recognition for their simplicity and strong practical performance. Unlike traditional adaptive methods that rely solely on gradient information, these approaches determine the learning rate using function values. The classical Polyak step size (PS), originally introduced by \citet{polyak1987introduction}, was proposed as an efficient rule for step size selection in gradient descent for solving convex optimization problems. Although rooted in early optimization literature, these ideas have recently seen a resurgence, particularly in machine learning applications. The Polyak step size was recently extended to stochastic settings. \citet{loizou2021stochastic} proposed and analyzed stochastic gradient descent (SGD) with Stochastic Polyak Step size (SPS) and demonstrated convergence guarantees for convex and non-convex problems while retaining the simplicity of the original rule. The proposed SPS comes with strong convergence guarantees and competitive performance in training DNNs, and it is particularly useful when training over-parameterized models \citep{loizou2021stochastic}. In the last few years, many other works have explored the use of stochastic Polyak step sizes in different training algorithms, including SGD \citep{garrigos2023function,orvieto2022dynamics, jiang2023adaptive,gower2021sgd, orabona2025new}, Stochastic Mirror Descent \citep{d2023stochastic}, Local SGD \citep{mukherjee2024locally}, and SGD with Momentum \citep{wang2023generalized,schaipp2024momo,oikonomou2025stochastic,gower2025analysis}. Nevertheless, nearly all existing analyses assume \emph{convexity and smoothness} while robust guarantees in the non-smooth regime remain scarce \citep{orabona2025new}. The understanding of practical and efficient stochastic Polyak-type step sizes in the arguably more challenging non-smooth regime is precisely the main focus of our work.

\textbf{Problem Setup and Main Algorithms.} 
We focus on the unconstrained finite–sum optimization problem  
\begin{equation}
    \label{eq:main-problem}
    \min_{x\in\R^d}\left[f(x)=\frac{1}{n}\sum_{i=1}^nf_i(x)\right],
\end{equation}
where each component function $f_i:\R^d\to\R$ is \emph{convex, Lipschitz and \textbf{non-smooth}} as well as lower bounded by $\ell_i^*$. Let $X^*$ denote the set of minimizers of (\ref{eq:main-problem}). We assume that $X^*\neq\emptyset$. This problem is the cornerstone of many machine learning tasks, \citep{hastie2009elements}, where the vector $x$ represents the model parameters, $f_i(x)$ is the loss related to the training point $i$, and the goal is to minimize the empirical risk $f(x)$ across all training points. 

Two widely used algorithms for solving stochastic, non-smooth convex optimization problems of the form (\ref{eq:main-problem}) are (i) the \emph{Stochastic Subgradient Method} (SSM) and (ii) the more recent \emph{Iterate Moving Average} (IMA), an equivalent algorithm to the stochastic subgradient method with momentum \citep{sebbouh2021almost}.

SSM, tracing back to the seminal work of \citet{robbins1951stochastic} and later formalized for convex objectives by \citet{shor1985minimization} and \citet{nedic2001incremental}. It has the following update rule 
\begin{align}
    \label{eq:ssm}
    x^{t+1}=x^t-\g_tg_i^t, \tag{SSM}
\end{align}
where $g_i^t\in\partial f_i(x^t)$ is the stochastic subgradient, $i$ is uniformly drawn from $\{1,\ldots,n\}$,  and a  $\g_t>0$ is the step size of the method. 

IMA extends \ref{eq:ssm} by adding a new iterate $(z^t)$: Each iteration first performs a subgradient step on $z^t$ and then averages the result with the previous iterate $x^t$. The update rule is given by:
\begin{align}
    \label{eq:ima}
    &z^{t+1}=z^t-\h_tg_i^t,\q\q \text{where}\q\q g_i^t\in\partial f_i(x^t)\nonumber\\
    &x^{t+1}=\frac{\l_{t+1}}{\l_{t+1}+1}x^t+\frac{1}{\l_{t+1}+1}z^{t+1}.\tag{IMA}
\end{align}
\citet{defazio2021power} shows that this two‑sequence scheme is \emph{algebraically equivalent} to the more familiar Stochastic Heavy Ball (SHB) momentum update rule $x^{t+1}=x^t-\hat{\g}_tg_i^t+\b(x^t-x^{t-1})$ with $1+\l_{t+1}=\l_t\b_t$ and $\h_t=(1+\l_{t+1})\hat{\g}_t$, \citep{ma2018quasi,kidambi2018insufficiency,liu2020improved,sebbouh2021almost, oikonomou2025stochastic}. 

In our work, we focus on adaptive variants of both \ref{eq:ssm} and \ref{eq:ima}, and provide Polyak‑type step‑sizes $\gamma_t$ for solving convex non-smooth optimization problems. 

\begin{table*}[t]
    \centering
    \resizebox{\textwidth}{!}{%
    \begin{tabular}{llccc}
        \toprule
        \textbf{Work} & \textbf{Step Size} & \textbf{No Interpolation?} & \textbf{No $f_i(x^*)$?} & \textbf{Rate}\\
        \midrule
        \multicolumn{5}{l}{\textit{Stochastic Subgradient Method}} \\
        \midrule
        \citet{loizou2021stochastic} & $\g_t=\frac{f_i(x^t)-f_i^*}{\|g_i^t\|^2}$ & \xmark & \cmark & $\mathcal{O}(1/\sqrt{T})$ \\
        \citet{garrigos2023function} & $\g_t=\frac{[f_i(x^t)-f_i(x^*)]_+}{\|g_i^t\|^2}$ & \cmark & \xmark & $\mathcal{O}(1/\sqrt{T})$ \\
        \rowcolor{green!30}\Cref{thm:sgd-M-lip} & $\g_t=\frac{f_i(x^t)-\ell_i^*}{\max\{\|g_i^t\|^2,M\}}$ & \cmark & \cmark & $\mathcal{O}(1/\sqrt{T}+\s^2)$ \\
        \midrule
        \multicolumn{5}{l}{\textit{IMA (Momentum)}} \\
        \midrule
        \citet{gower2025analysis} & $\h_t=\frac{[f_i(x^t)-f_i(x^*)+\l_t\langle g_i^t,x^t-x^{t-1}\rangle]_+}{\|g_i^t\|^2}$ & \cmark & \xmark & $\mathcal{O}(1/\sqrt{T})$ \\
        \rowcolor{green!30}\Cref{thm:ima-M-lip}, \Cref{thm:ima-M-lip-last} & $\h_t=\frac{[f_i(x^t)-\ell_i^*+\l_t\langle g_i^t,x^t-x^{t-1}\rangle]_+}{\max\{\|g_i^t\|^2,M\}}$ & \cmark & \cmark & $\mathcal{O}(1/\sqrt{T}+\s^2)$ \\
        \bottomrule
    \end{tabular}%
    }
    \caption{Overview of methods with Polyak‑type step sizes analyzed in the convex non-smooth stochastic setting. For every method we list the explicit rule, indicate whether the theory (i) holds \emph{without} the interpolation assumption and (ii) avoids the oracle values $f_i(x^\ast)$, and report the proven convergence rate on convex–Lipschitz objectives. Rows shaded in green are the new contributions of this work. The constant $M$ in our step sizes is the safeguard constant, for more details see \Cref{sec:spsm} and the variance $\s^2$ is defined in (\ref{eq:variance}). }
    \label{tab:results}
\end{table*}

\textbf{Prior non-smooth Polyak-type Results.}
For the Stochastic Subgradient Method (SSM) on convex and Lipschitz objectives, \citet{loizou2021stochastic} proved that \ref{eq:ssm} with the step size $\g_t=\frac{f_i(x^t)-f_i^*}{\|g_i^t\|^2}$, where $f_i^*=\inf_{x\in\R^d}f_i(x)$, converges at a rate of $\mathcal{O}(T^{-1/2})$ \emph{only} under the strong \emph{interpolation} assumption (i.e. there exists $x^*\in X^*$ such that $f_i(x^*)=f_i^*$ for all $i$). \citet{garrigos2023function} later proposed the step size 
\begin{align}
    \label{eq:sps*}
    \g_t=\frac{[f_i(x^t)-f_i(x^*)]_+}{\|g_i^t\|^2},\tag{SPS$^*$}
\end{align}
where $[z]_+=\max\{z,0\}$, obtaining the same $\mathcal{O}(T^{-1/2})$ bound without interpolation, but at the cost of requiring knowledge of each individual optimal loss value (i.e., $f_i(x^*)$). 

In the momentum setting, \citet{gower2025analysis} extended this idea to the Iterate Moving Average (IMA) update rule, showing that the \ref{eq:ima} method with the step size
\begin{align}
    \label{eq:ima-sps}
    \h_t=\frac{[f_i(x^t)-f_i(x^*)+\l_t\langle g_i^t,x^t-x^{t-1}\rangle]_+}{\|g_i^t\|^2},\tag{IMA-SPS}
\end{align}
achieves the same rate for both the Cesàro average and the last iterate, again assuming access to $f_i(x^*)$. For a summary of these results, we refer to \Cref{tab:results}. 

More recently, the concurrent work \citet{orabona2025new}, proves that several Polyak-type step-sizes for \ref{eq:ssm} can be interpreted as a gradient descent in a carefully selected surrogate loss.  As a special case of their approach, they proved that the original SPS$_{max}$ update rule of \citet{loizou2021stochastic}, $\g_t=\min \{ \frac{f_i(x^t)-f_i^*}{\|g_i^t\|^2}, \gamma_b \}$ where $\gamma_b>0$, works without any modification in the non-smooth regime. As we explain in \Cref{sec:practical_considerations}, even if SPS$_{max}$ comes with nice convergence guarantees for different classes of problems through the approach of  \citet{orabona2025new}, this rule could be ineffective in ML tasks without the smoothing trick of the upper bound $\gamma_b$ of \citet{loizou2021stochastic}. In practice, $\gamma_b$ is actually selected in the majority of iterations, making the impact of the actual Polyak step-size selection unclear.  On the other hand, our proposed \emph{Safeguarded} Polyak step sizes do not suffer from this issue; by construction, they avoid the use of a bound on the step-size selection and instead use a bound on the norm of the gradients. The results of \citet{orabona2025new} hold only for \ref{eq:ssm} and cannot trivially be extended to the momentum variants (\ref{eq:ima}), which we also handle in our work.

\textbf{Limitations and our remedy.}
Taken together, the existing variants of Polyak-type algorithms either (i) converge only under interpolation, or (ii) rely on oracle information such as $f_i(x^*)$ that is unavailable in practice or (iii) in practice heavily depend on the selection of the upper bound $\gamma_b$. The step sizes we introduce in this work eliminate these drawbacks: they require only a lower bound and a single safeguard parameter $M$, and match the $\mathcal{O}(T^{-1/2})$ rate for convex, Lipschitz objectives (up to a neighborhood), in both plain \ref{eq:ssm} and its momentum variant \ref{eq:ima}.

\subsection{Main Contributions}
\label{sec:main_contr}

Our main contributions are summarized below:

\textbf{$\diamond$ Safeguarded Polyak Step size for SSM.}
We design a new Polyak‑type rule, named safeguarded SPS (\ref{eq:sgd-sps-M}) for \ref{eq:ssm}, that prevents the denominator of the stochastic Polyak step size from becoming small. This single safeguard removes the need for any oracle information (e.g. $f_i(x^*)$) and attains $\mathcal{O}(T^{-1/2})$ convergence to a neighborhood of the solution, for stochastic, convex, and Lipschitz objectives \emph{without} the interpolation assumption.

\textbf{$\diamond$ An in-depth understanding of \ref{eq:sgd-sps-M}.} We explain the benefits of \ref{eq:sgd-sps-M} in terms of theory and experiments compared to prior works on Polyak-type step sizes. The proposed rule is, to our knowledge, the first Polyak‑type step size for \ref{eq:ssm} that remains is \emph{not hard-capped by a constant step-size ceiling}: it never reduces to a constant update, irrespective of the chosen safeguard threshold. Earlier variants \citep{loizou2021stochastic,wang2023generalized,zhang2025adaptive} can yield a \emph{constant} step size once a user‑specified upper bound on the step size is small enough. In addition, we establish a connection between the proposed step size rule and the clipping mechanism used extensively in modern DNN training. The safeguarding mechanism can be interpreted as an \emph{in‑step} gradient‑clipping operation, thereby supplying the first theoretical guarantees for Polyak‑style \emph{clipped} SSM, an update rule widely used to stabilise training in DNNs.

\textbf{$\diamond$ Safeguarded Polyak-type step size for IMA and last‑iterate convergence. }
We extend the safeguarded step size ideas to the Iterate Moving Average (\ref{eq:ima}) update rule, yielding the Safeguarded Polyak-type step size \ref{eq:ima-sps-M}. For this step size selection, we prove $\mathcal{O}(T^{-1/2})$ convergence for \ref{eq:ima} in terms of \emph{both} the Cesàro average and the last-iterate. Our proposed analysis provides the first convergence guarantees for an adaptive momentum method (through the equivalence of \ref{eq:ima} and \ref{eq:ssm} with heavy ball momentum) that does not require any strong assumption (e.g., the knowledge of $f_i(x^*)$, \citep{gower2025analysis}). 

\textbf{$\diamond$ Numerical Evaluation. }
In \Cref{sec:exps}, we present an extensive empirical study of our step size, including sensitivity analysis of our step size and comparison with other Polyak step sizes in the non-smooth setting. We also assess the performance of \ref{eq:sgd-sps-M} and \ref{eq:ima-sps-M} in training deep neural networks for multi-class image classification problems and track the gradient norms under \ref{eq:sgd-sps-M}, observing that they remain larger than those of the smoothed Polyak baseline.

\section{Safeguarded Stochastic Polyak Step sizes}
\label{sec:spsm}

This section introduces the \emph{Safeguarded} Polyak step sizes for \ref{eq:ssm} and \ref{eq:ima} and we explain how our theory differs from previous works. Next, we show how the safeguarded step sizes improve practical behaviour in deep‑network training. Finally, we show that the safeguard can be interpreted as an adaptive form of gradient clipping, thereby combining Polyak updates with clipped‑SSM techniques. 

\textbf{SPS$_{safe}$ for the Stochastic Subgradient Method.}
To stabilise Polyak step sizes in the non-smooth setting we introduce
\begin{align}
    \label{eq:sgd-sps-M}
    \g_t=\frac{f_i(x^t)-\ell_i^*}{\max\{\|g_i^t\|^2,M\}},\tag{SPS$_{safe}$}
\end{align}
where $g_i^t\in\partial f_i(x^t)$ and $M>0$ is a user–chosen safeguard.  The $\max\{\cdot,M\}$ term prevents the denominator from approaching zero, thereby avoiding the \emph{exploding step size} problem that arises when $\|g_i^t\|\to0$ in deep neural networks. Earlier work of \citet{loizou2021stochastic} controlled the same phenomenon by clipping the \emph{whole} polyak step size, taking $\g_t=\min\{\tn{Polyak\ step},\g_b\}$ with a fixed upper bound $\g_b$. By contrast, \ref{eq:sgd-sps-M} keeps the numerator intact and instead caps the denominator from below, preventing the reciprocal $1/\|g_i^t\|^2$ from blowing up when $\|g_i^t\|\to0$; this empirically produces smoother step sizes and tight theoretical bounds (see \Cref{thm:sgd-M-lip}).

In essence, the single hyper‑parameter $M$ replaces both the clipping constant $\g_b$ of SPS$_{\max}$ and the oracle values $f_i(x^*)$ required by \ref{eq:sps*}/\ref{eq:ima-sps}, yielding an adaptive step‑size family for non-smooth optimization that requires only a lower bound and a safeguard parameter $M$.

\textbf{IMA-SPS$_{safe}$ for momentum.}
The Iterate‑Moving‑Average (IMA) framework of \citet{gower2025analysis} selects $\h_t=[f_i(x^t)-f_i(x^*)+\lambda_t\langle g_i^t,x^t-x^{t-1}\rangle]_+/\|g_i^t\|^2$, but requires the unknown optimal loss $f_i(x^*)$. We remove this oracle dependence and simultaneously safeguard against vanishing gradients with
\begin{align}
    \label{eq:ima-sps-M}
    \h_t=\frac{[f_i(x^t)-\ell_i^*+\l_t\langle g_i^t,x^t-x^{t-1}\rangle]_+}
    {\max\{\|g_i^t\|^2,M\}},\tag{IMA-SPS$_{safe}$}
\end{align}
where $\l_t\geq0$ is the usual IMA momentum parameter. When $\l_t=0$, this reduces to \ref{eq:sgd-sps-M}, while for $\l_t>0$ it becomes a safeguarded analogue of stochastic heavy ball that enjoys last‑iterate and Cesàro guarantees (\Cref{thm:ima-M-lip,thm:ima-M-lip-last}) without any knowledge of $f_i(x^*)$.

\subsection{Theoretical Comparison with Closely Related Works}
\label{sec:theoretical_benefit}

\textbf{Comparison with Prior Results on Stochastic Subgradient Method.} To the best of our knowledge, the literature contained until recently two theoretical guarantees for Polyak-type step sizes in the convex–Lipschitz regime (non-smooth regime) \citep{loizou2021stochastic,garrigos2023function}, both of which require strong, often impractical, assumptions. More recently, in concurrent work, \citet{orabona2025new} proved that the SPS$_{\max}$ rule of \citet{loizou2021stochastic} can also be effective in the non-smooth regime.

Let $f_i$ be convex and $G$-Lipschitz functions and let $\overline{x}^T=\frac{1}{T}\sum_{t=0}^{T-1}x^t$. Then the above three papers provide the below convergence guarantees:
\begin{itemize}[leftmargin=*]
    \setlength{\itemsep}{0pt}
    \item \citep{loizou2021stochastic}: Assume that \emph{interpolation} condition holds. Consider the iterates of \ref{eq:ssm} with the step size given by $\g_t=\frac{f_i(x^t)-f_i^*}{\|g_i^t\|^2}$. Then $\E[f(\overline{x}^T)-f(x^*)]\leq\frac{G\|x^0-x^*\|}{\sqrt{T}}$. 
    \item \citep{garrigos2023function}: Consider the iterates of \ref{eq:ssm} with the step size given by $\g_t=\frac{[f_i(x^t)-f_i(x^*)]_+}{\|g_i^t\|^2}$. Then $\E[f(\overline{x}^T)-f(x^*)]\leq\frac{G\|x^0-x^*\|}{\sqrt{T}}$. The use of $[z]_+$ is needed to enforce the step size to be positive. 
   \item \citep{orabona2025new}: Consider the iterates of \ref{eq:ssm} with the step size given by $\g_t=\min\left\{\frac{f_i(x^t)-f_i^*}{\|g_i^t\|^2},\g_b\right\}$. Then $\E[f(\overline{x}^T)-f(x^*)]\leq\frac{G\|x^0-x^*\|}{\sqrt{T}}+\frac{\|x^0-x^*\|^2}{\g_bT}+2\hat{\s}^2+G\sqrt{2\g_b\hat{\s}^2}$, where $\hat{\s}^2=\E[f_i(x^*)-f_i^*]$. 
\end{itemize}
The first two papers above rely on conditions rarely met in practice: either (i) exact interpolation or (ii) full knowledge of $f_i(x^*)$ for all $i$. Note also that when interpolation is assumed, $f_i^*=f_i(x^*)$, making the step size in both papers coincide. Using our proposed safeguarded step size \ref{eq:sgd-sps-M}, we achieve the same $\mathcal{O}(T^{-1/2})$ convergence to a neighborhood of the solution, and the results hold \emph{without} assuming interpolation and \emph{without} oracle information. The final bound has the familiar smooth-setting structure: it converges to a neighborhood whose radius scales with the gradient variance and collapses to zero in the interpolated case, thereby recovering \citet{loizou2021stochastic} as a special instance. 

The concurrent work \citet{orabona2025new} managed to establish $\mathcal{O}(T^{-1/2})$ convergence to a neighborhood for stochastic, convex non-smooth objectives \emph{without} interpolation by directly employing the SPS$_{\max}$. The SPS$_{\max}$ choice for the step-size, as we explain in the next subsection, has nice theoretical properties but could suffer in practical scenarios as the choice of the upper bound $\gamma_b$ dictates the step-size selection ($\gamma_b$ is selected in the majority of iterations).  \ref{eq:sgd-sps-M} by construction, avoid the use of such a bound in step-size selection and instead use what we consider a more natural bound in the norm of gradients (part of the denominator in our step-size selection).

\textbf{Comparison with Prior Results on Iterative Moving Averaging (IMA/Momentum).}
The only work that managed to have a Polyak-type stepsize for \ref{eq:ima} for solving convex non-smooth problems is \citet{gower2025analysis}, which proposes using $\h_t=[f_i(x^t)-f_i(x^*)+\lambda_t\langle g_i^t,x^t-x^{t-1}\rangle]_+/\|g_i^t\|^2$. This step-size selection, similar to \citet{garrigos2023function}, requires a priori knowledge of the optimal loss values $f_i(x^*)$. For this step-size selection \citet{gower2025analysis} proved $\mathcal{O}(1/\sqrt{T})$ convergence to the exact solution for both the average point $\overline{x}^T=\frac{1}{T}\sum_{t=0}^{T-1}x^t$ and the last-iterate. In our work, we provide similar convergence guarantees (see \Cref{thm:ima-M-lip} and \Cref{thm:ima-M-lip-last}) to a neighborhood of the solution without requiring the knowledge of $f_i(x^*)$.

\subsection{Practical considerations in training of DNNs}
\label{sec:practical_considerations}
\begin{figure*}[ht]
	\centering
    \begin{subfigure}{0.75\columnwidth}
        \includegraphics[width=\textwidth]{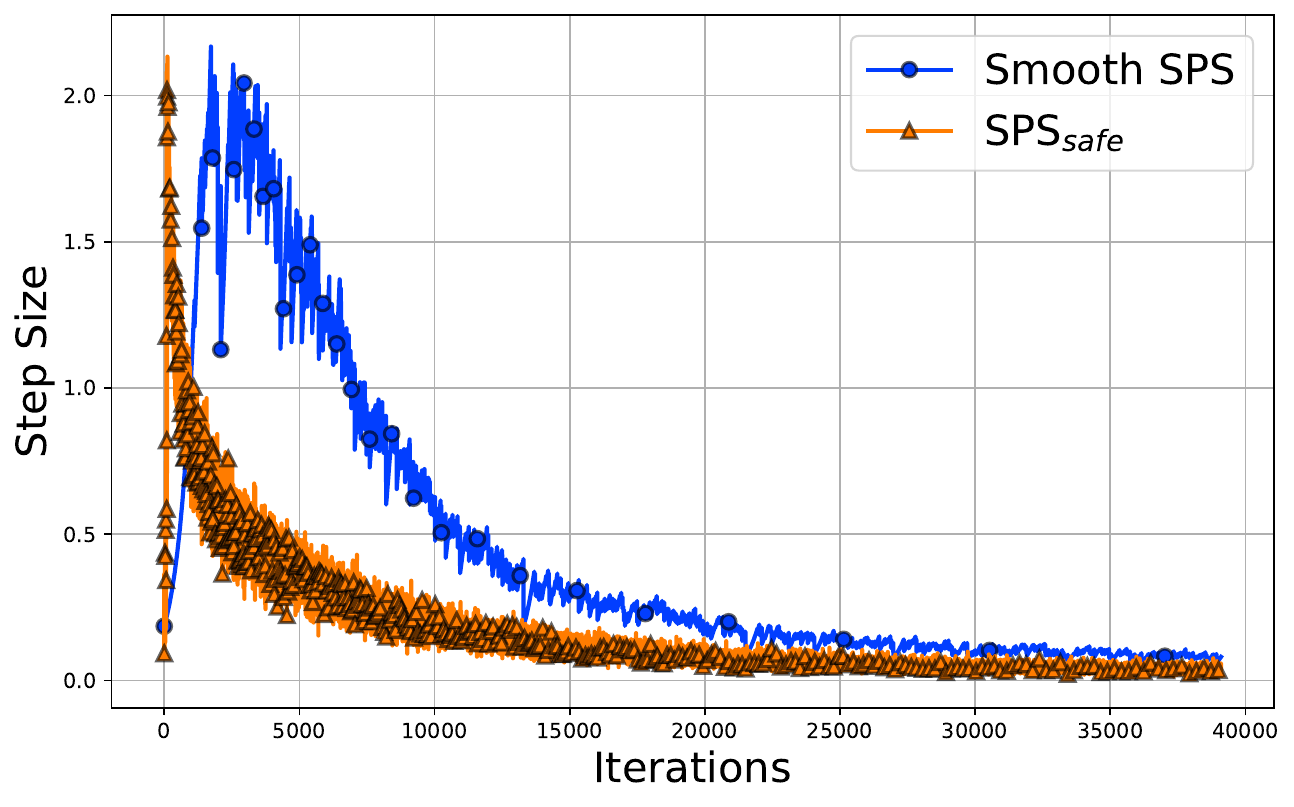}
    \end{subfigure}
    ~
    \begin{subfigure}{0.75\columnwidth}
        \includegraphics[width=\textwidth]{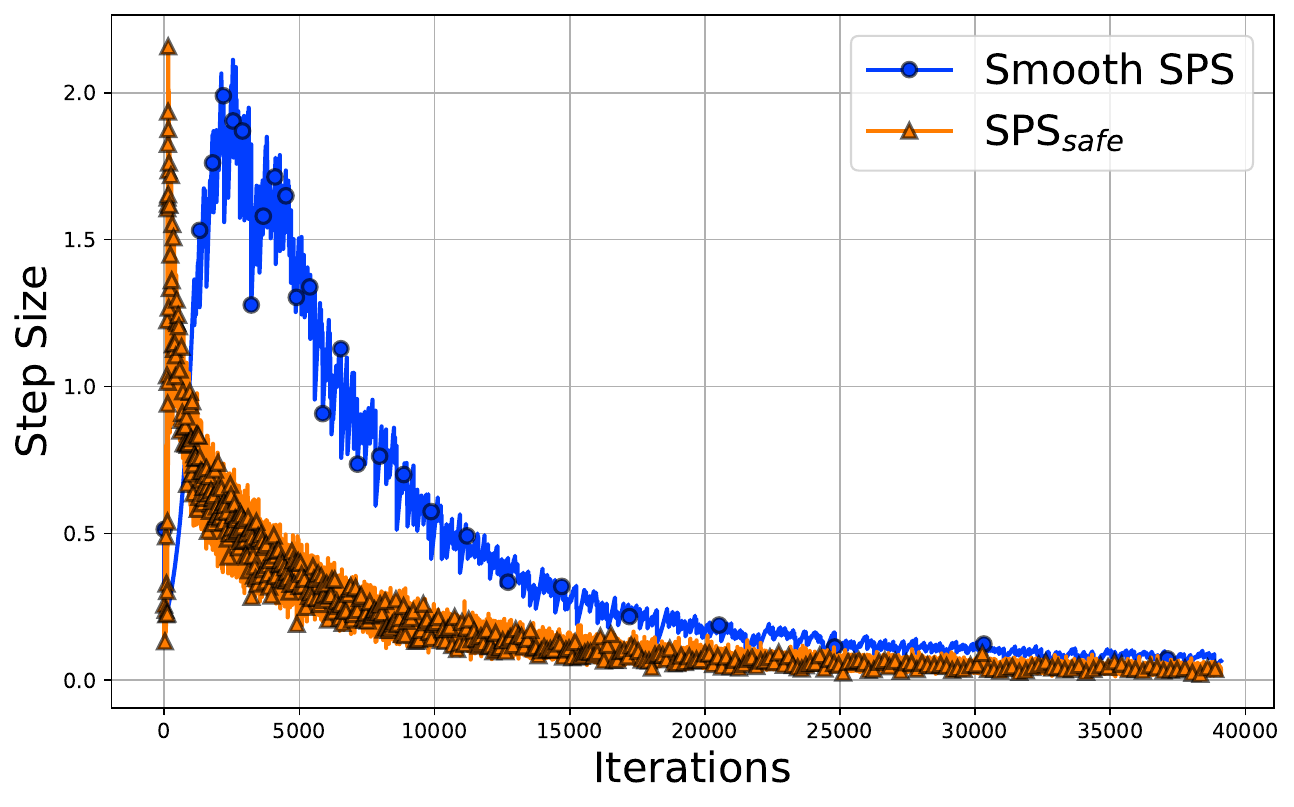}
    \end{subfigure}
    \caption{Comparison of \ref{eq:spsmax-smooth} and \ref{eq:sgd-sps-M} in the training of ResNet20 in CIFAR-10 (left plot) and CIFAR-100 (right plot). }
    \label{fig:ss}
\end{figure*}
\textbf{How often is a Polyak rule actually used?}
Polyak step sizes are praised for being \emph{adaptive}, yet in deep neural networks experiments, they can end up acting like fixed learning rates. To investigate this phenomenon we run ResNet‑20 \citep{he2016deep} on CIFAR‑10 dataset \citep{krizhevsky2009learning}. Recall, \citet{loizou2021stochastic} proposed the following step size
\begin{align}
    \label{eq:spsmax}
    \g_t^{\tn{SPS}_{\max}}=\min\left\{\g_t^{\tn{SPS}},\g_b\right\},\g_t^{\tn{SPS}}=\frac{f_i(x^t)-\ell_i^*}{c\|g_i^t\|^2}.\tag{SPS$_{\max}$}
\end{align}
For \ref{eq:spsmax}, we swept the constant $c$ over $c\in\{0.1,0.2,\dots,1.0\}$, set $\gamma_b=1$ and $\ell_i^\ast=0$, and trained for 100 epochs. The best accuracy, which reached $87.88\%$, occurred at $c=0.4$.   However, a counter revealed that in \textbf{31.8\%} of the iterations the algorithm selected the constant value $\gamma_b$ rather than the "true" Polyak step $\g_t^{\tn{SPS}}$. Thus almost one‑third of the updates were effectively constant.

In order to increase performance, \citet{loizou2021stochastic} use the following smoothing rule for the clipping hyper-parameter\footnote{Similar smoothing procedures have been used in \citet{tan2016barzilai,vaswani2019painless}} $\g_b^t=\t^{b/n}\g_{t-1}$ with $\t=2$, batch size $b$, and dataset size $n$. The step size then takes the following form:
\begin{align}
    \g_t^{\tn{Smooth SPS}_{\max}}
    &=\min\left\{\g_t^{\tn{SPS}},\g_b^t\right\}\\
    &=\min\left\{\g_t^{\tn{SPS}},\t^{b/n}\g_{t-1}^{\tn{Smooth SPS}_{\max}}\right\}.\label{eq:spsmax-smooth}\tag{Smooth SPS$_{\max}$}
\end{align}
Adopting this rule and retuning $c$ over $c\in\{0.1,0.2,\dots,1.0\}$, lifts accuracy to $89.79\%$ for $c=0.5$, but at a hidden cost: \textbf{98.45\%} of the steps now use $\g_t=\g_b^t$.  In practice, the method behaves almost like a decreasing learning‑rate scheme, the step size is plotted in \Cref{fig:ss}.

\textbf{Safeguarded Polyak.}
Replacing \ref{eq:spsmax} with our safeguarded step \ref{eq:sgd-sps-M} yields $90.39\%$ accuracy, competitive with the smoothed baseline, while \emph{never} collapsing to a constant value. \ref{eq:sgd-sps-M} also exhibits a smoothing behaviour similar to \ref{eq:spsmax-smooth}, see \Cref{fig:ss} for a direct comparison between the two step sizes. Crucially, this behaviour is backed by explicit convergence guarantees, whereas the smoothed heuristic offers none.

\subsection{Connections with Clipping: Adaptive Clipped SSM via \ref{eq:sgd-sps-M}}
\label{sec:clipping}

A convenient way to stabilise stochastic subgradient methods is to clip each individual gradient before applying the update. Given a threshold $c>0$, define the clipping operator
\begin{align}
    \tn{clip}_c(g):=\min\left\{1,\frac{c}{\|g\|}\right\}g,
\end{align}
which leaves gradients with $\|g\|\leq c$ unchanged and rescales larger ones to have norm exactly $c$. The \emph{clipped} stochastic subgradient method (clipped SSM) therefore has the following update rule: 
\begin{align}
    \label{eq:clipped-ssm}
    x^{t+1}=x^t-\tilde{\g}_t\tn{clip}_{c}\left(g_i^t\right)=x^t-\tilde{\g}_t\min\left\{1,\frac{c}{\|g_i^t\|}\right\}g_i^t,\tag{Clipped SSM}
\end{align}
with step size $\tilde{\g}_t>0$. By capping overly large gradients, clipping protects against the exploding-gradient spikes while gradients with $\|g\|\leq c$ are left unchanged.

The following proposition explains how \ref{eq:sgd-sps-M} can be modified for \ref{eq:clipped-ssm}, thus providing an adaptive step size for \ref{eq:clipped-ssm}. For the proof we refer to the appendix. 
\begin{proposition}
    \label{prop:clip-equiv}
    \ref{eq:ssm} with \ref{eq:sgd-sps-M} and $M=c^2$ is algebraically equivalent to \ref{eq:clipped-ssm} with the \emph{adaptive} step size $\tilde{\g}_t=\frac{f_i(x^t)-\ell_i^*}{c\max\{c,\|g_i^t\|\}}$. 
\end{proposition}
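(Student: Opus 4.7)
The plan is to verify the claim by showing that, for each iteration $t$, the scalar coefficient multiplying $g_i^t$ in the \ref{eq:ssm}+\ref{eq:sgd-sps-M} update coincides with the scalar coefficient multiplying $g_i^t$ in the \ref{eq:clipped-ssm} update with the proposed adaptive $\tilde{\g}_t$. Since both updates have the same starting iterate $x^t$ and the same subgradient $g_i^t\in\partial f_i(x^t)$, equality of the two scalar multipliers is sufficient for the iterates to agree.

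Concretely, I would first substitute $M=c^2$ into \ref{eq:sgd-sps-M}, so that the SPS$_{safe}$ step applied to $g_i^t$ contributes a factor $\frac{f_i(x^t)-\ell_i^*}{\max\{\|g_i^t\|^2,c^2\}}$. On the clipped side, the coefficient of $g_i^t$ is $\tilde{\g}_t\min\{1,c/\|g_i^t\|\}=\frac{f_i(x^t)-\ell_i^*}{c\max\{c,\|g_i^t\|\}}\min\{1,c/\|g_i^t\|\}$. After canceling the common numerator $f_i(x^t)-\ell_i^*$, the claim reduces to the purely algebraic identity
\begin{equation*}
\frac{1}{\max\{\|g_i^t\|^2,c^2\}}=\frac{1}{c\,\max\{c,\|g_i^t\|\}}\cdot\min\left\{1,\frac{c}{\|g_i^t\|}\right\}.
\end{equation*}

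To prove this identity I would perform the natural case split on $\|g_i^t\|$ relative to $c$. If $\|g_i^t\|\leq c$, then the denominator on the left equals $c^2$, while on the right $\max\{c,\|g_i^t\|\}=c$ and the clipping factor is $1$, yielding $1/c^2$ on both sides. If $\|g_i^t\|>c$, then the left-hand denominator is $\|g_i^t\|^2$, while on the right $\max\{c,\|g_i^t\|\}=\|g_i^t\|$ and the clipping factor is $c/\|g_i^t\|$, so the product becomes $\frac{1}{c\|g_i^t\|}\cdot\frac{c}{\|g_i^t\|}=\frac{1}{\|g_i^t\|^2}$. Both cases agree, establishing the identity and hence the equivalence of the updates.

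There is no serious obstacle here: the argument is a one-line algebraic identity verified by a two-case analysis. The only subtle point worth flagging in the write-up is a degenerate case where $\|g_i^t\|=0$; in that regime the clipping factor $\min\{1,c/\|g_i^t\|\}$ is conventionally interpreted so that $\tn{clip}_c(0)=0$, and on the SPS$_{safe}$ side the safeguard $M=c^2$ ensures the denominator stays strictly positive, so both updates simply reduce to $x^{t+1}=x^t$ and the equivalence continues to hold.
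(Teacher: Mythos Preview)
Your proposal is correct and follows essentially the same route as the paper: both arguments reduce the equivalence to the algebraic identity $\min\{1,c/\|g_i^t\|\}\cdot\max\{c^2,\|g_i^t\|^2\}=c\max\{c,\|g_i^t\|\}$ (you phrase it in reciprocal form) and verify it by the two-case split on whether $\|g_i^t\|\le c$ or $\|g_i^t\|>c$. Your remark on the degenerate case $g_i^t=0$ is a nice addition not made explicit in the paper.
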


\textbf{Small subgradients.}
Gradient clipping augments \ref{eq:ssm} with a simple safety mechanism: whenever a stochastic gradient's norm exceeds a user‑set bound $c$, it is rescaled to have norm exactly $c$, while gradients with $\|g\|\leq c$ are left untouched. By capping very large updates, it prevents the exploding gradient spikes that can derail optimization. In a related but distinct spirit, the safeguard in our step‑size formula \ref{eq:sgd-sps-M} prevents the \emph{learning rate} from blowing up: rather than rescaling the subgradient, it keeps the reciprocal normalisation $1/\|g_i^t\|^2$ bounded when subgradients become extremely small. There is a growing deterministic literature that combines Polyak step sizes with \ref{eq:clipped-ssm} in the non‑smooth setting, notably the recent analyses of \citet{gorbunov2025methods} and \citet{takezawa2024parameter}. These results, however, target $(L_0,L_1)$‑smooth objectives, whereas we focus on globally Lipschitz, but otherwise non-smooth, functions and operates in the \emph{stochastic} regime.

\section{Convergence Analysis}
\label{sec:convergence}

This section states the convergence guarantees for \ref{eq:sgd-sps-M} and \ref{eq:ima-sps-M}; full proofs (and a convergence result for a time-varying safeguard $M_t$, \Cref{thm:sgd-variable-M}) are deferred to the appendix. Throughout, each loss $f_i$ is convex and $G$-Lipschitz.

\textbf{Variance measure.}
To quantify gradient noise in the non-smooth setting we use
\begin{align}
    \label{eq:variance}
    \s^2:=\left(\E_i\left[\left(f_i(x^*)-\ell_i^*\right)^2\right]\right)^{\frac{1}{2}}.
\end{align}
The standard definition of the variance in the Stochastic Polyak step sizes literature \citep{loizou2021stochastic,oikonomou2025stochastic,wang2023generalized,zhang2025adaptive} is given by $\hat{\s}^2:=\E_i\left[f_i(x^*)-\ell_i^*\right]$. Note that by Jensen's inequality we have that $\hat{\s}^2\leq\s^2$. Moreover, $\s^2<\infty$ whenever each $f_i$ is lower‑bounded. When problem (\ref{eq:main-problem}) is interpolated, i.e. there exists $x^*\in X^*$ such that $f_i(x^*)=f_i^*$, then choosing $\ell_i^*=f_i^*$ we get $\s^2=0$. Many modern machine learning models satisfy this condition. Examples include non-parametric regression \citep{liang2020just} and over-parameterized deep neural networks \citep{zhang2021understanding,ma2018power}.

\subsection{Stochastic Subgradient Method}

Firstly, we focus on the stochastic subgradient method, where we have the following theorem. 
\begin{theorem}
    \label{thm:sgd-M-lip}
    Consider the iterates of \ref{eq:ssm} with the step size \ref{eq:sgd-sps-M}. Let $\overline{x}^T=\frac{1}{T}\sum_{t=0}^{T-1}x^t$. Then 
    \begin{align*}
        \E[f(\overline{x}^T)-f(x^*)]
        &\leq\frac{\sqrt{\max\{G^2,M\}}\|x^0-x^*\|}{\sqrt{T}}\\
        &\q\q+\sqrt{\frac{\max\{G^2,M\}}{M}}\s^2.
    \end{align*}
\end{theorem}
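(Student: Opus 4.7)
The plan is to derive a one-step recursion on $\|x^t-x^*\|^2$ that isolates a clean $(f_i(x^t)-f_i(x^*))^2$ decrease, telescope it, and then transfer the bound on the squared function-value gaps to $\bar{x}^T$ by a sequence of Jensen/Cauchy--Schwarz inequalities, in the spirit of the standard Polyak step-size proof but with careful bookkeeping of the safeguard $M$.

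First I would expand $\|x^{t+1}-x^*\|^2 = \|x^t-x^*\|^2 - 2\gamma_t\langle g_i^t, x^t-x^*\rangle + \gamma_t^2\|g_i^t\|^2$ and apply convexity of $f_i$ to obtain $\langle g_i^t, x^t-x^*\rangle \geq f_i(x^t)-f_i(x^*)$. Introduce the shorthand $D_t := \max\{\|g_i^t\|^2, M\}$, $a_t := f_i(x^t)-\ell_i^*$ and $b_t := f_i(x^*)-\ell_i^* \geq 0$, so that $\gamma_t = a_t/D_t$ and $f_i(x^t)-f_i(x^*) = a_t - b_t$. The quadratic term obeys $\gamma_t^2\|g_i^t\|^2 \leq a_t^2/D_t$ since $\|g_i^t\|^2 \leq D_t$, and the algebraic identity $-2a_t(a_t-b_t)+a_t^2 = -(a_t-b_t)^2+b_t^2$ yields the key per-step inequality
\[
\|x^{t+1}-x^*\|^2 \;\leq\; \|x^t-x^*\|^2 - \frac{(f_i(x^t)-f_i(x^*))^2}{D_t} + \frac{(f_i(x^*)-\ell_i^*)^2}{D_t}.
\]

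Next, I would use $G$-Lipschitz continuity to get $D_t \leq \max\{G^2,M\}$ (which \emph{tightens} the negative term, since $-1/D_t$ increases in $D_t$) and the safeguard $D_t \geq M$ (which relaxes the noise term). Taking conditional expectation in $i$, applying Jensen's inequality to pass $\E_i[(f_i(x^t)-f_i(x^*))^2] \geq (f(x^t)-f(x^*))^2$, and using $\E_i[(f_i(x^*)-\ell_i^*)^2]=\sigma^4$, I obtain
\[
\E[(f(x^t)-f(x^*))^2] \;\leq\; \max\{G^2,M\}\bigl(\E\|x^t-x^*\|^2-\E\|x^{t+1}-x^*\|^2\bigr) + \frac{\max\{G^2,M\}}{M}\sigma^4.
\]
Telescoping from $t=0$ to $T-1$, dividing by $T$, and chaining convexity of $f$ (so $\E[f(\bar x^T)-f(x^*)] \leq \frac{1}{T}\sum_t \E[f(x^t)-f(x^*)]$) with Cauchy--Schwarz ($\frac{1}{T}\sum_t c_t \leq (\frac{1}{T}\sum_t c_t^2)^{1/2}$) and Jensen ($\E[c_t]^2\leq \E[c_t^2]$) gives $\E[f(\bar x^T)-f(x^*)] \leq \bigl(\max\{G^2,M\}\|x^0-x^*\|^2/T + (\max\{G^2,M\}/M)\sigma^4\bigr)^{1/2}$, and splitting via $\sqrt{a+b}\leq\sqrt a+\sqrt b$ delivers the stated two-term bound.

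The only delicate step is the completing-the-square identity that converts the Polyak-substituted terms into the clean decomposition $-(f_i(x^t)-f_i(x^*))^2/D_t + (f_i(x^*)-\ell_i^*)^2/D_t$; without it, one is stuck with cross terms of the form $(f_i(x^t)-\ell_i^*)(f_i(x^*)-\ell_i^*)$ that cannot be handled unless $f_i(x^*)$ is known (as in \ref{eq:sps*}) or interpolation holds. Everything else -- the two one-sided bounds on $D_t$, the telescoping, and the repeated Jensen applications -- is routine bookkeeping.
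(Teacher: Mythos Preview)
Your proposal is correct and follows essentially the same route as the paper's proof: the same one-step expansion, the same use of $\|g_i^t\|^2/D_t\le1$, the identical completing-the-square identity (the paper writes it as $-2xy+y^2=(x-y)^2-x^2$ with $x=f_i(x^t)-f_i(x^*)$, $y=f_i(x^t)-\ell_i^*$, which is your $-2a_t(a_t-b_t)+a_t^2=-(a_t-b_t)^2+b_t^2$), the same two one-sided bounds on $D_t$, the same telescoping, and the same Jensen chain at the end (the paper collapses your Cauchy--Schwarz + per-term Jensen into a single Jensen over the joint $(t,\omega)$ average). The only differences are purely notational.
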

\Cref{thm:sgd-M-lip} eliminates two issues that limit the best previous bounds of \citet{loizou2021stochastic} and \citet{garrigos2023function}: it needs neither the interpolation condition nor oracle access to the values $f_i(x^*)$ and still achieves the rate $\mathcal{O}(T^{-1/2})$. Because the safeguard acts solely through the denominator, the same result, via the equivalence in \Cref{prop:clip-equiv}, yields the first stochastic guarantee for the clipped‑SSM update \eqref{eq:clipped-ssm}.

\textbf{On the $\s^2$-neighborhood term.}
The bound of \Cref{thm:sgd-M-lip} has the familiar two-term structure of constant-step-size stochastic methods: an $\mathcal{O}(T^{-1/2})$ optimization term plus a $\s^2$-dependent neighborhood term. Such a neighborhood is not specific to our analysis; it is an inherent feature of any non-decreasing, non-variance-reduced stochastic method, and analogous terms appear in constant-step-size SGD \citep{garrigos2023handbook}, its momentum variants \citep{sebbouh2021almost,liu2020improved}, and all prior Polyak-type analyses \citep{loizou2021stochastic,oikonomou2025stochastic,orabona2025new}. In general, the coefficient $\sqrt{\max\{G^2,M\}/M}$ of $\s^2$ is decreasing in $M$, so a larger safeguard tightens the neighborhood, a trend we confirm empirically in \Cref{sec:numpy}.

A natural attempt to remove the neighborhood is to decay the safeguard $M$ towards zero over time. This does not work: once $M<G^2$, the neighborhood coefficient $\sqrt{\max\{G^2,M\}/M}=\sqrt{G^2/M}$ \emph{diverges} as $M\to0$, so the neighborhood term blows up rather than vanishing. Hence the radius of the neighborhood cannot be driven to zero through $M$ alone. As a partial result in terms of varying $M$, \Cref{thm:sgd-variable-M} in \Cref{sec:smoothing_trick} shows that the safeguard may be replaced by a \emph{time-varying} sequence $M_t$ without losing the $\mathcal{O}(T^{-1/2})$ rate, provided $M_t$ stays bounded away from zero and from above.

Now consider two notable specializations of \Cref{thm:sgd-M-lip}. First, in the \emph{interpolated} regime, where every sample can be fitted exactly, we choose the lower bounds $\ell_i^*=f_i^*$. The variance term then disappears and we obtain an exact convergence rate.
\begin{corollary}[Interpolation]
    \label{cor:sgd-M-lip-inter}
    Under interpolation with $\ell_i^*=f_i^*$,
    \begin{align*}
        \E[f(\overline{x}^T)-f(x^*)]\leq\frac{\sqrt{\max\{G^2,M\}}\|x^0-x^*\|}{\sqrt{T}}.
    \end{align*}
\end{corollary}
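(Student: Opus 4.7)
My plan is straightforward: I would apply \Cref{thm:sgd-M-lip} directly and verify that, under the interpolation assumption with the specific choice $\ell_i^* = f_i^*$, the variance term $\s^2$ defined in \eqref{eq:variance} vanishes. Once $\s^2 = 0$, the bound from \Cref{thm:sgd-M-lip} collapses exactly to the stated inequality, so no new argument is needed beyond this single identity.

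To verify that the variance vanishes, I would invoke the interpolation assumption, which states that there exists $x^* \in X^*$ with $f_i(x^*) = f_i^*$ for every $i \in \{1,\ldots,n\}$. With the prescribed choice $\ell_i^* = f_i^*$, we therefore have $f_i(x^*) - \ell_i^* = 0$ for all indices $i$, giving
\begin{align*}
\s^2 = \left(\E_i\left[(f_i(x^*) - \ell_i^*)^2\right]\right)^{1/2} = 0.
\end{align*}
Substituting this into the bound of \Cref{thm:sgd-M-lip} eliminates the additive neighborhood term $\sqrt{\max\{G^2,M\}/M}\,\s^2$, leaving precisely the stated rate.

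Since the corollary is a direct specialization, there is no real obstacle to overcome; this is why I expect the proof to fit in a couple of lines. The only subtlety worth highlighting is conceptual rather than technical: the choice $\ell_i^* = f_i^*$ makes the numerator of \ref{eq:sgd-sps-M} coincide with the original Polyak-style numerator of \citet{loizou2021stochastic}, so the corollary recovers an exact $\mathcal{O}(T^{-1/2})$ rate to the minimizer and matches the classical interpolation guarantee as a special case of the more general \Cref{thm:sgd-M-lip}. No lower-boundedness beyond $\ell_i^* = f_i^*$ itself needs to be checked, since the existence of this value is guaranteed by the standing assumption that each $f_i$ is lower bounded by $\ell_i^*$.
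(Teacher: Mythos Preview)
Your proposal is correct and matches the paper's approach: the corollary is stated as an immediate specialization of \Cref{thm:sgd-M-lip}, obtained by observing that under interpolation with $\ell_i^*=f_i^*$ the variance $\s^2$ defined in \eqref{eq:variance} vanishes, which is exactly the computation you carry out. No further argument is given in the paper, so your write-up is complete.
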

When $M=0$ this reproduces the step size and rate of \citet{loizou2021stochastic}, as seen in \Cref{sec:theoretical_benefit}, thus recovering earlier results as a special case of the safeguarded framework. Next, we concentrate on the deterministic or full batch regime.
\begin{corollary}[Deterministic SSM]
    \label{cor:sgd-M-lip-det}
    In the deterministic regime ($i=[n]$), assume $\ell^*=f^*$. Then $\s^2=0$, so \Cref{thm:sgd-M-lip} suggests
    \begin{align*}
        \min_{t\in[T]}\{f(x^t)-f(x^*)\}\leq\frac{\sqrt{\max\{G^2,M\}}\|x^0-x^*\|}{\sqrt{T}}.
    \end{align*}
\end{corollary}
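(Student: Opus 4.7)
The plan is to obtain the deterministic guarantee as a refinement of the argument behind Theorem~\ref{thm:sgd-M-lip} rather than as an immediate consequence of its statement. In the deterministic (full-batch) regime only the full objective $f$ is queried at each iteration; taking the lower bound $\ell^{*}=f^{*}=f(x^{*})$ in the definition of the variance gives
\begin{align*}
\s^{2}=\sqrt{\E\!\left[(f(x^{*})-f^{*})^{2}\right]}=0,
\end{align*}
so the noise term in Theorem~\ref{thm:sgd-M-lip} vanishes and only the $\mathcal{O}(T^{-1/2})$ term survives.

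First I would revisit the core argument behind Theorem~\ref{thm:sgd-M-lip}: expanding $\|x^{t+1}-x^{*}\|^{2}$ and using convexity of $f$ yields the standard descent inequality
\begin{align*}
\|x^{t+1}-x^{*}\|^{2}\leq\|x^{t}-x^{*}\|^{2}-2\g_{t}\bigl(f(x^{t})-f(x^{*})\bigr)+\g_{t}^{2}\|g^{t}\|^{2}.
\end{align*}
Substituting \ref{eq:sgd-sps-M} with $\g_{t}=(f(x^{t})-f^{*})/\max\{\|g^{t}\|^{2},M\}$, and using $\|g^{t}\|^{2}/\max\{\|g^{t}\|^{2},M\}\leq 1$, collapses the right-hand side to $\|x^{t}-x^{*}\|^{2}-(f(x^{t})-f(x^{*}))^{2}/\max\{\|g^{t}\|^{2},M\}$. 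Applying the Lipschitz bound $\|g^{t}\|\leq G$ to replace the denominator by $\max\{G^{2},M\}$, telescoping over $t=0,\dots,T-1$, and then invoking Cauchy--Schwarz to pass from $\sum_{t}(f(x^{t})-f^{*})^{2}$ to $\sum_{t}(f(x^{t})-f^{*})$ yields
\begin{align*}
\frac{1}{T}\sum_{t=0}^{T-1}\bigl(f(x^{t})-f(x^{*})\bigr)\leq\frac{\sqrt{\max\{G^{2},M\}}\,\|x^{0}-x^{*}\|}{\sqrt{T}}.
\end{align*}
In the proof of Theorem~\ref{thm:sgd-M-lip} this average-gap bound is transformed into a bound on $f(\overline{x}^{T})$ via Jensen's inequality; for the corollary I would instead apply the elementary observation $\min_{t\in[T]}\{f(x^{t})-f(x^{*})\}\leq\frac{1}{T}\sum_{t=0}^{T-1}(f(x^{t})-f(x^{*}))$, which immediately gives the claim.

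The only subtle point, and the reason I cannot simply quote Theorem~\ref{thm:sgd-M-lip} as a black box, is that the minimum gap and the Cesàro gap are in general incomparable: Jensen's inequality provides $f(\overline{x}^{T})\leq\frac{1}{T}\sum_{t}f(x^{t})$, not the reverse, and the minimum iterate can be either above or below $f(\overline{x}^{T})$. I therefore need to expose the intermediate per-iterate-sum bound produced inside the proof of Theorem~\ref{thm:sgd-M-lip} and pair it with the trivial min-average inequality. Aside from this bookkeeping step there is no genuine technical obstacle, since the descent, telescoping, and Cauchy--Schwarz calculation is already performed in the proof of Theorem~\ref{thm:sgd-M-lip}.
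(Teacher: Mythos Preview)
Your proposal is correct and matches the paper's implicit approach: the paper does not give a separate proof of the corollary, but its proof of \Cref{thm:sgd-M-lip} already establishes the intermediate bound $\frac{1}{T}\sum_{t=0}^{T-1}(f(x^{t})-f(x^{*}))\leq\sqrt{\max\{G^{2},M\}}\,\|x^{0}-x^{*}\|/\sqrt{T}$ (deterministic case, $\sigma^{2}=0$), from which your min--average step immediately yields the claim. You are also right to flag that the corollary cannot be read off the \emph{statement} of \Cref{thm:sgd-M-lip} alone, since $f(\overline{x}^{T})$ and $\min_{t}f(x^{t})$ are incomparable; the paper's wording ``suggests'' quietly acknowledges this, and your re-opening of the proof is exactly what is needed.
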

There is a plethora of Polyak step size analyses in the deterministic regime assuming non-smoothness. Polyak’s seminal work in \citet{polyak1964some} on deterministic subgradient descent with the Polyak step size already established an $\mathcal{O}(G\|x^0-x^*\|/\sqrt{T})$ bound for nonsmooth convex objectives. Subsequent studies strengthened the guarantees under additional structure: \citet{davis2018subgradient} proved \emph{linear} convergence for the same step size when the objective is weakly convex and sharp, while \citet{hazan2019revisiting} obtained an $\mathcal{O}(1/T)$ rate for strongly convex, Lipschitz functions.

\subsection{Iterate Moving Average (Momentum)}

In this section we focus on \ref{eq:ima}. Since the Bregman divergence of a non-smooth function depends on the chosen subgradient, we first fix this choice.
\begin{remark}[Choice of subgradient]
    \label{rem:subgradient}
    Throughout the momentum analysis we use the specific subgradient
    \begin{align*}
        g^t:=\E_i[g_i^t\mid x^t],
    \end{align*}
    the conditional expectation of the sampled stochastic subgradient $g_i^t\in\partial f_i(x^t)$. By Lemma~9.5 of \citet{garrigos2023handbook}, $g^t\in\partial f(x^t)$, so $g^t$ is a valid subgradient of $f$ at $x^t$. Accordingly, the Bregman divergence is taken with respect to this subgradient,
    \begin{align*}
        B_f(x^{t-1},x^t):=f(x^{t-1})-f(x^t)-\langle g^t,x^{t-1}-x^t\rangle\ \geq\ 0,
    \end{align*}
    where non-negativity follows from the subgradient inequality. This is purely a notational clarification and does not affect the theorem statements or the proofs.
\end{remark}
\begin{theorem}
    \label{thm:ima-M-lip}
    Consider the iterates of \ref{eq:ima} with the step size (\ref{eq:ima-sps-M}) and let $(\l_t)_{t>0}$ be a non-increasing sequence of nonnegative reals. Then
    \begin{align*}
        \E&[f(\overline{x}^T)-f(x^*)]+\sum_{t=0}^{T-1}\frac{\l_t}{T}\E[B_f(x^{t-1},x^t)]\\
        &\leq\frac{\sqrt{\max\{G^2,M\}}\|x^0-x^*\|}{\sqrt{T}}+\sqrt{\frac{\max\{G^2,M\}}{M}}\s^2,
    \end{align*}
    where $\overline{x}^T=\frac{1}{T}\sum_{t=0}^{T-1}x^t$ and $B_f$ is the Bregman divergence defined in \Cref{rem:subgradient}.
\end{theorem}
The bound mirrors that of \Cref{thm:sgd-M-lip}, but with an additional non‑negative Bregman divergence term. The most common scenario for the sequence $\l_t$ is being held fixed ($\l_t=\l$). When $\l=0$ we recover exactly \Cref{thm:sgd-M-lip}. However, when $\l>0$ we have an extra non-negative term on the left hand side. This suggests, but does not force, a speed‑up over the plain subgradient method. 

So far we have only provided guarantees for the Cesaro average. In the next theorem we prove convergence of the last iterate.
\begin{theorem}
    \label{thm:ima-M-lip-last}
    Consider the iterates of \ref{eq:ima} with the step size (\ref{eq:ima-sps-M}) and let $\l_t=t$. Then 
    \begin{align*}
        \textstyle
        \E&[f(x^{T-1})-f(x^*)]+\frac{1}{T}\sum_{t=0}^{T-1}t\E[B_f(x^{t-1},x^t)]\\
        &\leq\frac{\sqrt{\max\{G^2,M\}}\|x^0-x^*\|}{\sqrt{T}}+\sqrt{\frac{\max\{G^2,M\}}{M}}\s^2.
    \end{align*}
\end{theorem}
This result provides an explicit guarantee for the \emph{last} iterate, often the quantity of practical interest, while retaining the same rate as the Cesàro bound. Similar remarks as in the previous section hold about the neighborhood in this regime, namely the neighborhood is decreasing with respect to safeguard $M$ (see also \Cref{sec:numpy}). 

Fewer results exist for Polyak‑type step sizes paired with momentum than for their momentum‑free counterparts. \citet{wang2023generalized}, treats a heavy‑ball variant under \emph{smooth} convex losses and achieves an $\mathcal{O}(1/T)$ rate. \citet{oikonomou2025stochastic} study the Stochastic Heavy Ball momentum via \ref{eq:ima}, again assuming smooth objectives. The only work that drops smoothness is the analysis of \citet{gower2025analysis}, which attains the $\mathcal{O}(T^{-1/2})$ rate in the convex, Lipschitz setting, but at the cost of requiring the quantities $f_i(x^*)$. Our safeguarded momentum rule removes this dependence on $f_i(x^*)$ dependence while preserving the same rate up to a neighborhood.

\section{Numerical Experiments}
\label{sec:exps}

We now examine the empirical behaviour of the safeguarded step sizes \ref{eq:sgd-sps-M} and \ref{eq:ima-sps-M}. The first series of experiments targets convex and nonconvex \emph{non-smooth} objectives, complementing the analysis of \Cref{sec:convergence}. The second series moves to deep‐learning benchmarks, measuring the impact of the step sizes on classification accuracy. We provide the code for all of our experiments at \url{https://github.com/dimitris-oik/sps_safe}.

\subsection{Evaluation on Non-smooth Objectives: \\Support Vector Machines and Phase Retrieval}
\label{sec:numpy}

\begin{figure*}[t]
	\centering
    \begin{subfigure}{0.45\columnwidth}
        \includegraphics[width=\textwidth]{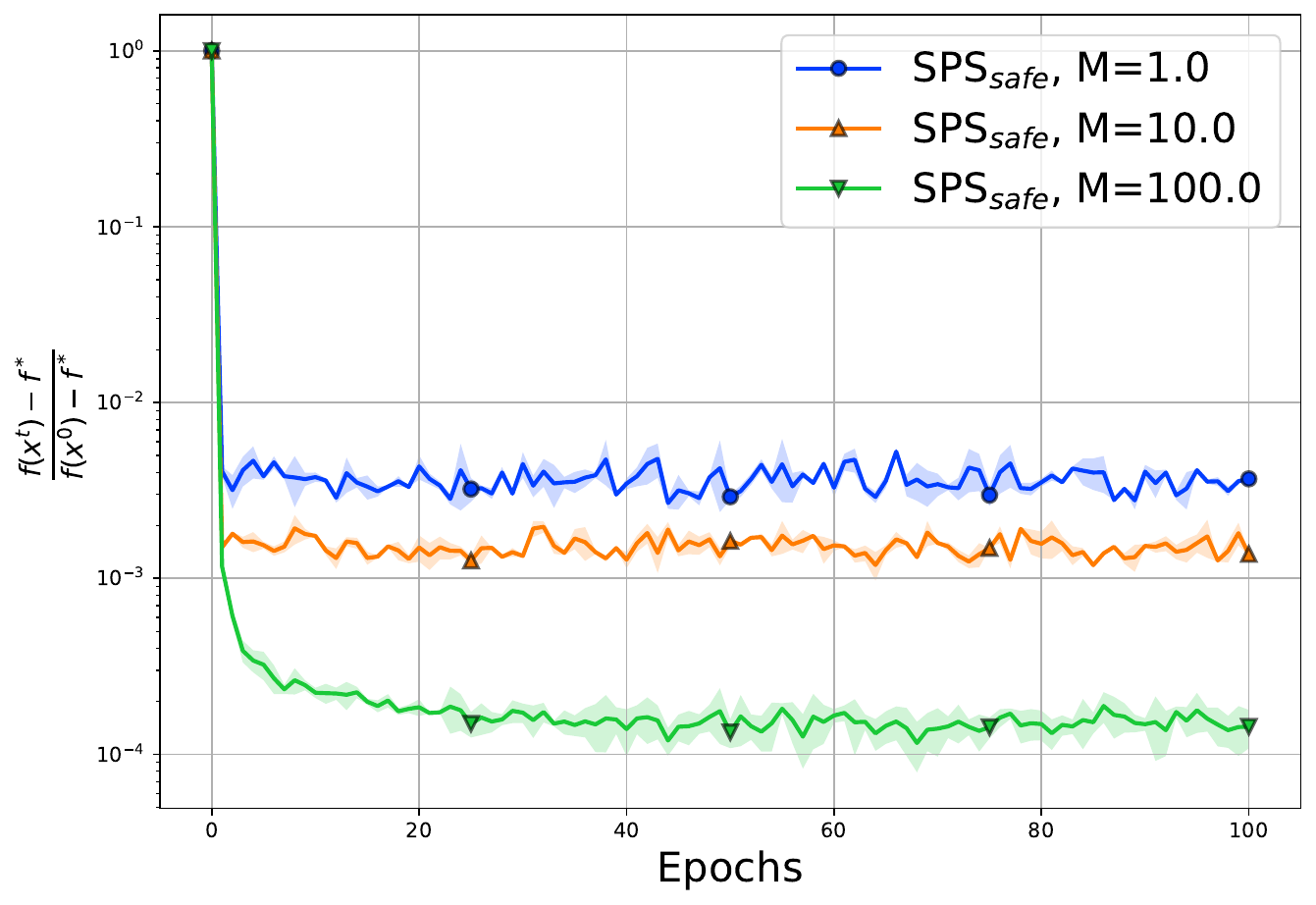}
        \caption{\tiny\ref{eq:ssm} with \ref{eq:sgd-sps-M}.}
        \label{subfig:pr_sgd_sens}
    \end{subfigure}
    ~
    \begin{subfigure}{0.45\columnwidth}
        \includegraphics[width=\textwidth]{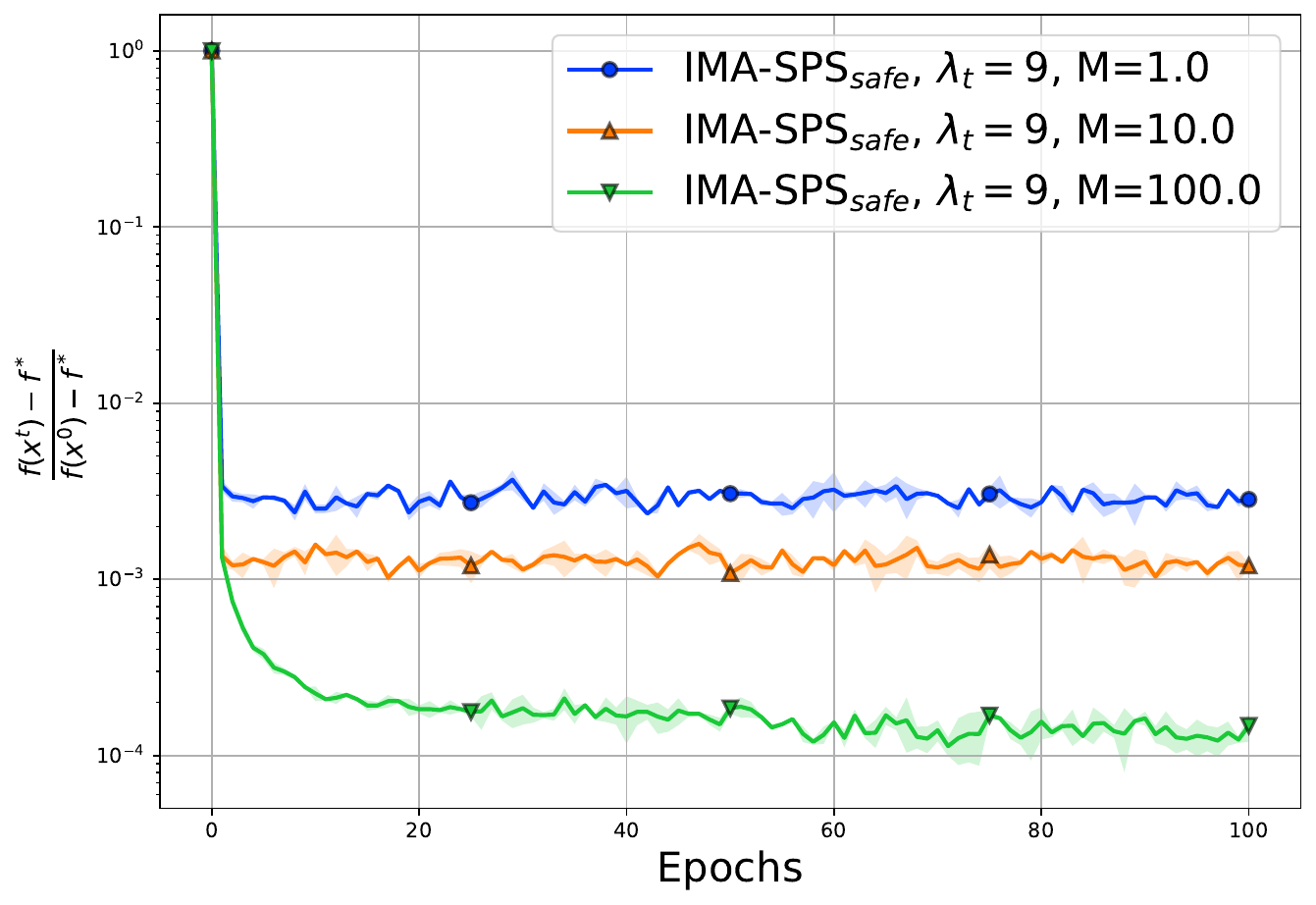}
        \caption{\tiny\ref{eq:ima} with \ref{eq:ima-sps-M}.}
        \label{subfig:pr_ima_sens}
    \end{subfigure}
    ~
    \begin{subfigure}{0.45\columnwidth}
        \includegraphics[width=\textwidth]{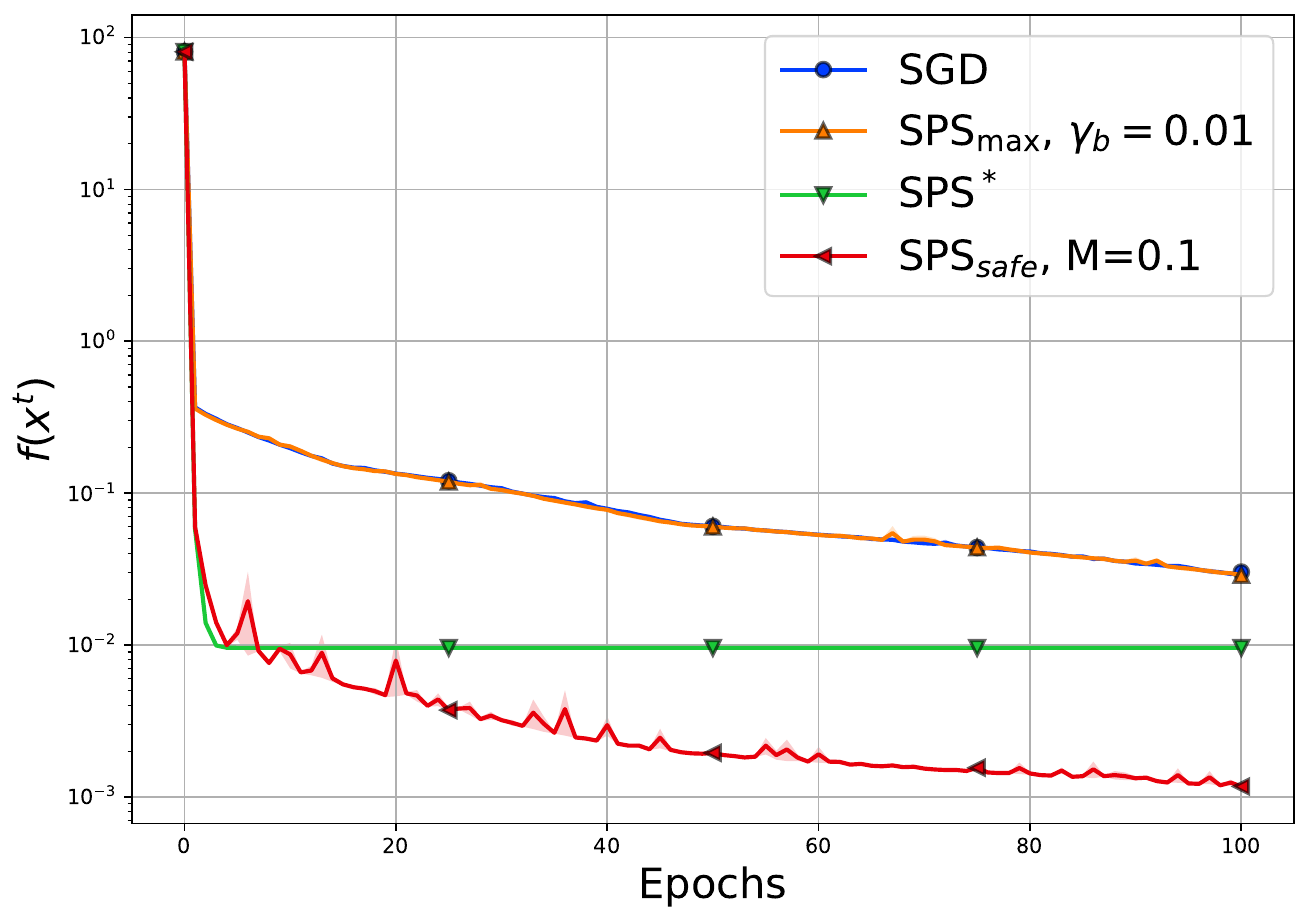}
        \caption{\tiny Plain \ref{eq:ssm} variants.}
        \label{subfig:svm_sgd_comp}
    \end{subfigure}
    ~
    \begin{subfigure}{0.45\columnwidth}
        \includegraphics[width=\textwidth]{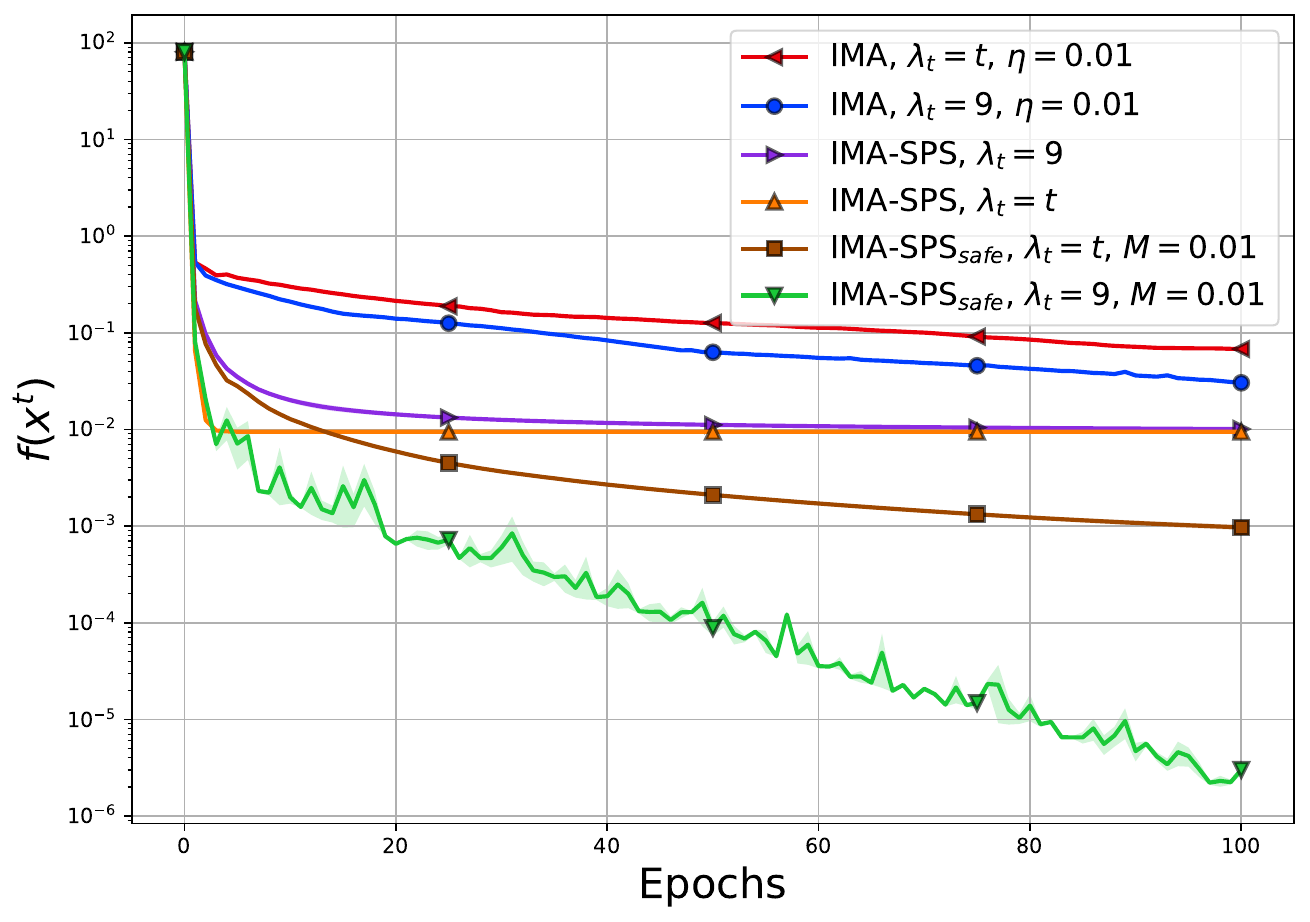}
        \caption{\tiny Momentum-\ref{eq:ima} variants.}
        \label{subfig:svm_ima_comp}
    \end{subfigure}
    
    \caption{Sensitivity analysis of the safeguarded Polyak step size to the threshold $M$ (Panels \subref{subfig:pr_sgd_sens}-\subref{subfig:pr_ima_sens} for Phase Retrieval) and comparison against SPS variants (Panels \subref{subfig:svm_sgd_comp}–\subref{subfig:svm_ima_comp} for SVM).}
    \label{fig:sens_and_comp}
\end{figure*}

In this part, we evaluate the empirical behaviour of \ref{eq:sgd-sps-M} and \ref{eq:ima-sps-M}. We focus on two synthetic non-smooth problems: a Support Vector Machine (SVM) and a Phase Retrieval problem. The SVM hinge loss is convex and $G$-Lipschitz and thus directly matches the assumptions of \Cref{sec:convergence}; the phase-retrieval objective is nonconvex and non-smooth, and together they cover two non-smooth regimes of interest for Polyak-type step sizes.

\textbf{SVM.} The individual loss and subgradient are given by
\begin{align*}
    f_i(x)&=\max\left(0,1-b_i\langle A_i,x\rangle\right)\\
    \partial f_i(x)&=-\d_{b_i\langle A_i,x\rangle\leq1}b_iA_i,
\end{align*}
where $\d_X=1$ if condition $X$ holds, and $\d_X=0$ otherwise. 

\textbf{Phase Retrieval.}
The individual loss and subgradient are given by
\begin{align*}
    f_i(x)&=\left|\langle A_i,x\rangle^2-b_i\right|\\
    \partial f_i(x)&=2\langle A_i,x\rangle\textnormal{sgn}\left(\langle A_i,x\rangle^2-b_i\right)A_i,
\end{align*}
where $\textnormal{sgn}(\cdot)$ denotes the sign function. 

\textbf{Sensitivity to the safeguard $M$.}
We study how the choice of the threshold $M$ influences both the plain and momentum variants of our proposed step sizes. The experiment is a synthetic phase‑retrieval task with $n=300$ samples and dimension $d=10$; rows of $\boldsymbol{A}$ and the vector $b$ are drawn i.i.d. from $\mathcal{N}(0,1)$. We run \ref{eq:ssm} equipped with \ref{eq:sgd-sps-M} and \ref{eq:ima} equipped with \ref{eq:ima-sps-M} for $100$ epochs, using a batch size of $n/10=30$. Three values of the safeguard are tested, $M\in\{1,10,100\}$, and for the momentum experiment, we set $\l_t=9$, which is equivalent to the heavy‑ball parameter $\b=0.9$. Each configuration is averaged over three independent trials; mean curves and one‑standard‑deviation bands are reported in \Cref{subfig:pr_sgd_sens,subfig:pr_ima_sens}. Both algorithms converge to progressively lower error plateaus as $M$ grows, confirming that the safeguard controls the size of the limiting neighborhood.

\textbf{Comparison with existing Polyak step sizes.}
We next benchmark the safeguarded rules against their best‑tuned classical counterparts. The task is a synthetic SVM with $n=300$ samples and dimension $d=100$; both the feature matrix $\boldsymbol{A}$ and the label vector $b$ are drawn from $\mathcal{N}(0,1)$ as in the previous section. For \ref{eq:sgd-sps-M} and \ref{eq:sgd-sps-M} we sweep the safeguard over $M\in\{0.01, 0.1, 1.0, 1.0, 10.0, 100.0\}$. The plain \ref{eq:ssm} is tuned over four learning rates $\g\in\{10^{-4},10^{-3},10^{-2},10^{-1}\}$, while the constant step size \ref{eq:ima} baseline is tuned over $\h\in\{10^{-4},10^{-3},10^{-2},10^{-1}\}$ for two momentum choices: $\l_t=9$ (equivalent to $\b=0.9$) and $\l_t=t$. \ref{eq:sps*} \citep{garrigos2023function} and \ref{eq:ima-sps} \citep{gower2025analysis} require the exact optimal values $f_i(x^*)$. To approximate these quantities we run deterministic (full‑batch) subgradient descent for $50,000$ iterations and treat the final iterate as $x^*$. All methods are trained for $100$ epochs with batch size $n/10=30$. Every experiment is repeated three times with independent data draws; mean trajectories and one‑standard‑deviation bands are plotted in \Cref{subfig:svm_sgd_comp,subfig:svm_ima_comp}.

The safeguarded Polyak rules dominate the competition on this non-smooth problem. In the plain‑\ref{eq:ssm} setting (left panel) the best tuned \ref{eq:sgd-sps-M} consistently outperforms both tuned fixed‑step size \ref{eq:ssm} and \ref{eq:sps*}, achieving lower final error and faster early progress. The \ref{eq:ima} experiments (right panel) paint the same picture: \ref{eq:ima-sps-M} is superior to \ref{eq:ima-sps}, and the momentum coefficient $\l_t=9$ is clearly preferable to $\l_t=t$. These results confirm that the safeguard delivers practical gains in addition to its theoretical advantages.

\subsection{Applications on DNNs}
\label{sec:exps_dnn}

\textbf{Comparison with other optimizers.}
We assess the safeguarded step sizes on image‑classification benchmarks. ResNet‑20/32 \citep{he2016deep} models are trained on CIFAR‑10/100 \citep{krizhevsky2009learning} with standard augmentation (random crop, horizontal flip, channel‑wise normalisation \citep{devries2017improved}). All runs are executed on NVIDIA RTX 6000 Ada GPUs for $100$ epochs. Baselines include tuned \ref{eq:ssm}, tuned \ref{eq:ima} with $\l_t=9$, Adam \citep{kingma2014adam}, and \ref{eq:spsmax} and \ref{eq:ima-sps}. We compare these against the proposed safeguarded rules \ref{eq:sgd-sps-M} and \ref{eq:ima-sps-M}. Unless stated otherwise, all deep-learning experiments (\Cref{fig:dnn_comp,fig:grad_20} and \Cref{fig:20_100,fig:32_10,fig:32_100}) use a \emph{fixed} safeguard $M=1$ and lower bounds $\ell_i^*=0$, the latter being a valid choice since the cross-entropy training loss is nonnegative. We recommend $M=1$ as a robust default for deep-learning practitioners (see the sensitivity study in \Cref{sec:extra_sens}). The tuning-free smoothed variant $M_t$ is evaluated separately in \Cref{sec:smoothing_trick}. For more details and more experiments we refer to the appendix. In \Cref{fig:dnn_comp}, we observe that in both \ref{eq:ssm}-based and \ref{eq:ima}-based methods our proposed safeguarded step sizes have superior generalization performance.

\begin{figure}[ht]
	\centering
    \begin{subfigure}{0.45\columnwidth}
        \includegraphics[width=\textwidth]{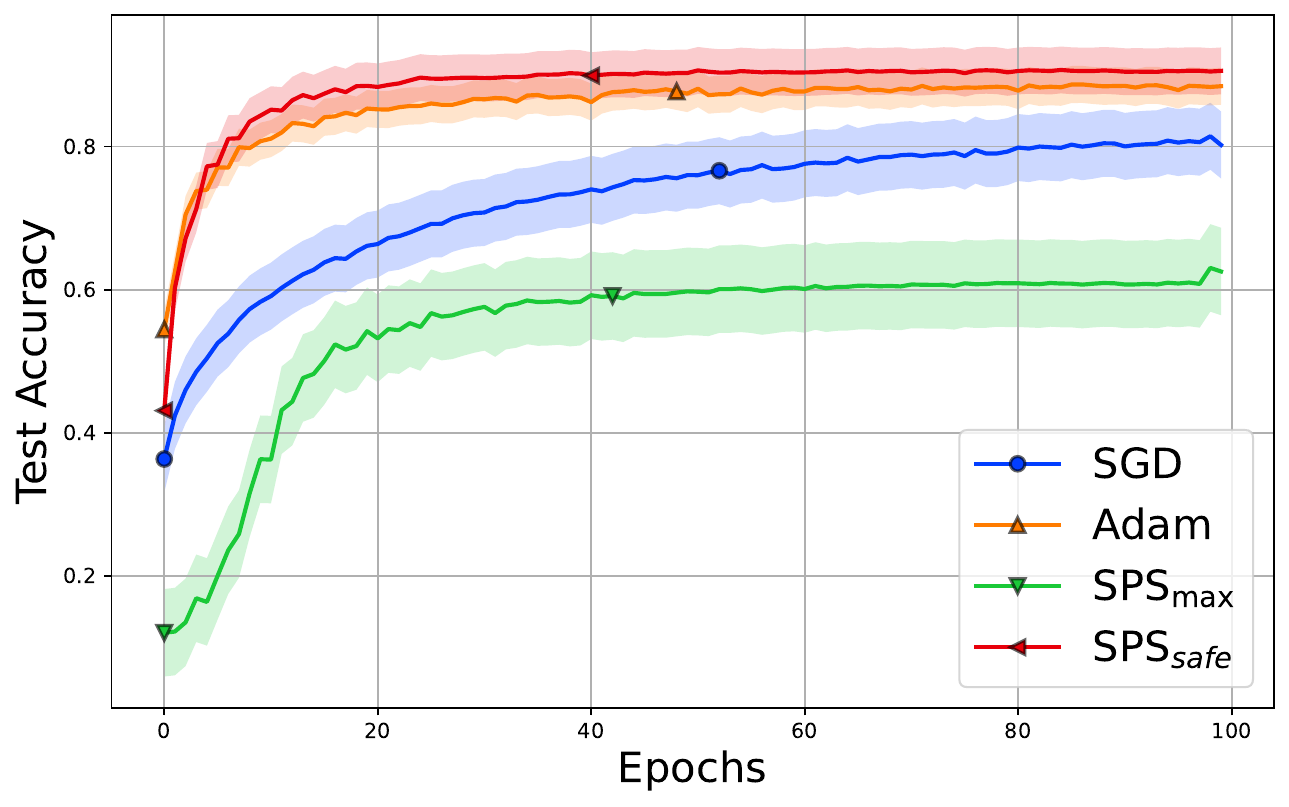}
    \end{subfigure}
    ~
    \begin{subfigure}{0.45\columnwidth}
        \includegraphics[width=\textwidth]{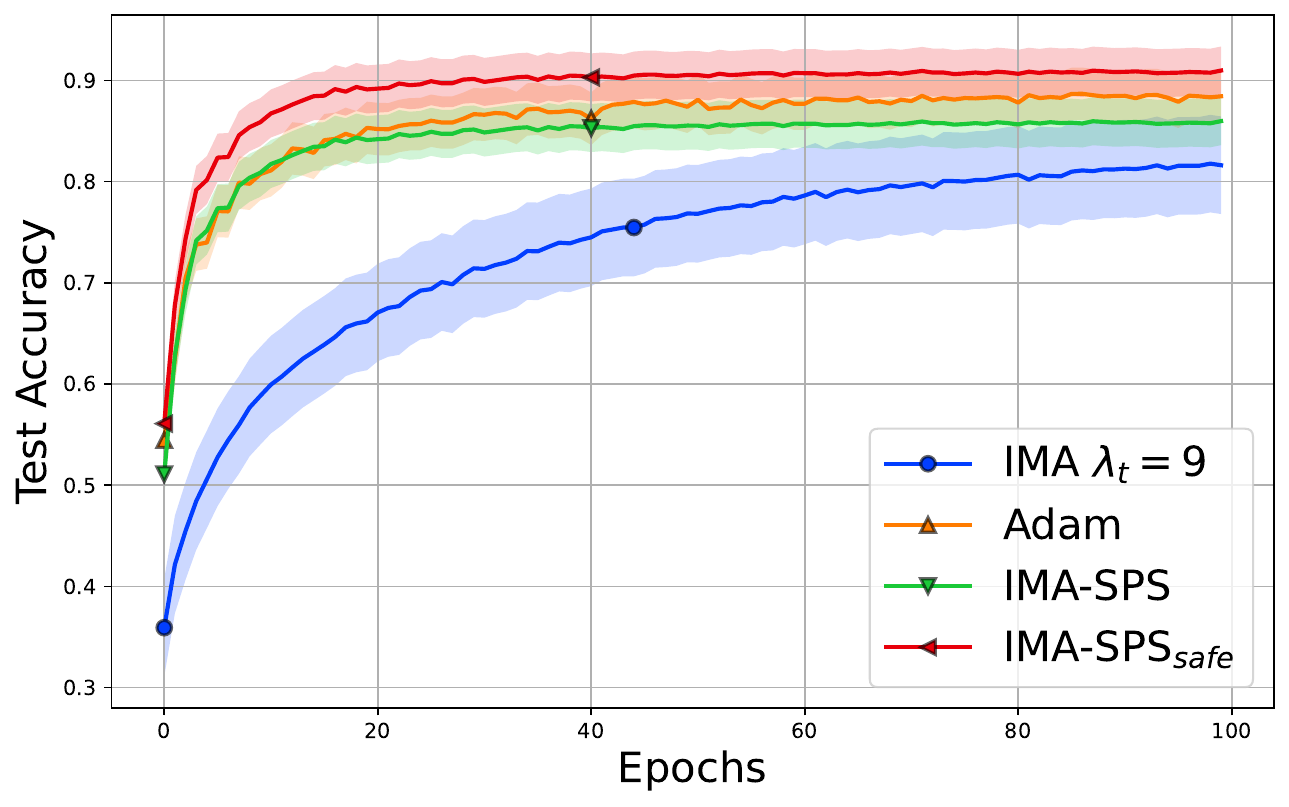}
    \end{subfigure}
    \caption{Test accuracy of ResNet20 on CIFAR-10. \textbf{Left:} \ref{eq:ssm}-based methods. \textbf{Right:} \ref{eq:ima}-based methods. }
    \label{fig:dnn_comp}
\end{figure}

\begin{figure}[ht]
	\centering
    \begin{subfigure}{0.45\columnwidth}
        \includegraphics[width=\textwidth]{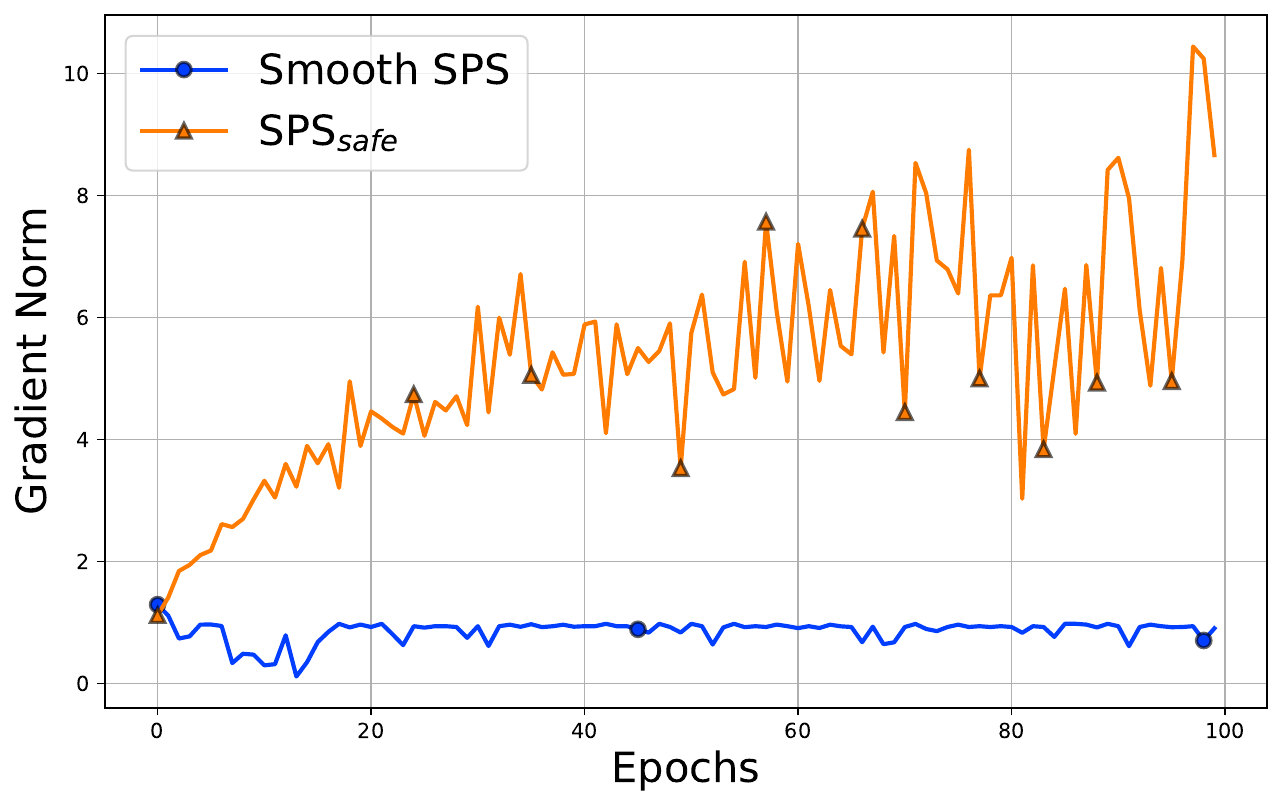}
    \end{subfigure}
    ~
    \begin{subfigure}{0.45\columnwidth}
        \includegraphics[width=\textwidth]{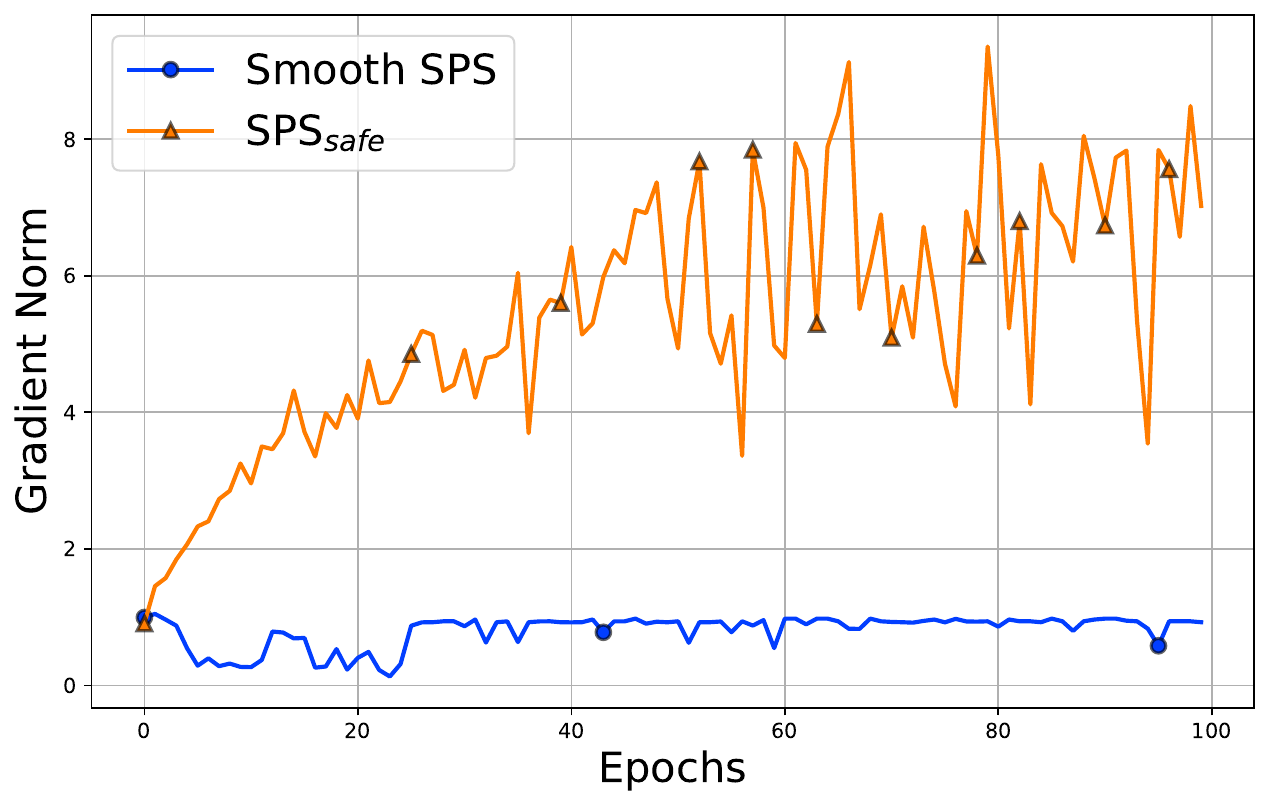}
    \end{subfigure}
    \caption{Gradient Norms during training of ResNet20. \textbf{Left:} Trained on CIFAR-10. \textbf{Right:} Trained on CIFAR-100. }
    \label{fig:grad_20}
\end{figure}

\textbf{Comparison of Gradient Norms.}
In \Cref{fig:grad_20}, we track the subgradient magnitude $\|g_i^t\|$ at the end of each epoch when training with \ref{eq:spsmax-smooth} versus \ref{eq:sgd-sps-M} for ResNet-20 in CIFAR-10/100. Empirically, \ref{eq:spsmax-smooth} drives (sub)gradients to very small values, whereas under \ref{eq:sgd-sps-M} we observe noticeably larger norms. While \ref{eq:sgd-sps-M} prevents division by vanishing subgradients in its denominator \emph{by construction}, here we additionally observe \emph{empirically} that the subgradient norms themselves remain larger than under \ref{eq:spsmax-smooth}, instead of collapsing towards zero. 

The gradient norms increase during the training run. However, this increase does not lead to a loss in generalization performance. This observation has recently been studied in \citet{defazio2025gradients} where the author provides a theoretical explanation for this phenomenon. Here we provide evidence that this phenomenon appears in the safeguarded version of SPS. 

\section{Conclusion and Future Directions}
\label{sec:conc}

In this work, we introduced \emph{safeguarded} Polyak step sizes and established an $\mathcal{O}(T^{-1/2})$ convergence rate for stochastic convex Lipschitz objectives, without assuming interpolation or requiring knowledge of the component-wise optimal values $f_i(x^*)$. To the best of our knowledge, this provides one of the first Polyak-type rules with such guarantees in this general setting. We further extended this idea to \ref{eq:ima}, proving convergence guarantees for both the Cesàro average and the \emph{last} iterate.

We emphasize that our theory is developed for convex Lipschitz objectives, while our deep-learning experiments involve highly non-convex models. Nevertheless, this connection is consistent with the recent findings of \citet{schaipp2025surprising}, who show that convergence bounds for possibly non-smooth convex Lipschitz objectives can closely track the empirical loss curves observed in large non-convex models.

Several promising directions remain open. On the theoretical side, extending the analysis to structured non-convex settings, such as weakly convex or Polyak-Lojasiewicz objectives~\citep{karimi2016linear}, as well as to the $(L_0,L_1)$-smooth regime~\citep{zhanggradient}, would help bridge the gap between the current convex theory and modern non-convex training. On the algorithmic side, combining \ref{eq:sgd-sps-M} with recent training paradigms, particularly schedule-free optimization \citep{defazio2024road}, is a natural next step. Finally, on the applications side, evaluating safeguarded Polyak step sizes in distributed and federated optimization, such as Local SGD~\citep{koloskova2020unified}, and in the training of LLMs is especially compelling, as these settings combine large-scale stochasticity, sensitivity to learning-rate schedules, and regimes in which automatic step-size adaptation may offer substantial practical benefits.

\section*{Impact Statement}
This paper presents work whose goal is to advance the field of Machine Learning. There are many potential societal consequences of our work, none of which we feel must be specifically highlighted here.

\bibliography{ref}

\begin{thebibliography}{60}
\providecommand{\natexlab}[1]{#1}
\providecommand{\url}[1]{\texttt{#1}}
\expandafter\ifx\csname urlstyle\endcsname\relax
  \providecommand{\doi}[1]{doi: #1}\else
  \providecommand{\doi}{doi: \begingroup \urlstyle{rm}\Url}\fi

\bibitem[Abdukhakimov et~al.(2024)Abdukhakimov, Xiang, Kamzolov, and Tak{\'a}{\v{c}}]{abdukhakimov2024stochastic}
Abdukhakimov, F., Xiang, C., Kamzolov, D., and Tak{\'a}{\v{c}}, M.
\newblock Stochastic gradient descent with preconditioned polyak step-size.
\newblock \emph{Computational Mathematics and Mathematical Physics}, 64\penalty0 (4):\penalty0 621--634, 2024.

\bibitem[Choudhury et~al.(2024)Choudhury, Tupitsa, Loizou, Horv{\'a}th, Takac, and Gorbunov]{choudhury2024remove}
Choudhury, S., Tupitsa, N., Loizou, N., Horv{\'a}th, S., Takac, M., and Gorbunov, E.
\newblock Remove that square root: A new efficient scale-invariant version of adagrad.
\newblock \emph{NeurIPS}, 2024.

\bibitem[Davis et~al.(2018)Davis, Drusvyatskiy, MacPhee, and Paquette]{davis2018subgradient}
Davis, D., Drusvyatskiy, D., MacPhee, K.~J., and Paquette, C.
\newblock Subgradient methods for sharp weakly convex functions.
\newblock \emph{Journal of Optimization Theory and Applications}, 179\penalty0 (3):\penalty0 962--982, 2018.

\bibitem[Defazio(2025)]{defazio2025gradients}
Defazio, A.
\newblock Why gradients rapidly increase near the end of training.
\newblock \emph{arXiv preprint arXiv:2506.02285}, 2025.

\bibitem[Defazio \& Gower(2021)Defazio and Gower]{defazio2021power}
Defazio, A. and Gower, R.~M.
\newblock The power of factorial powers: New parameter settings for (stochastic) optimization.
\newblock In \emph{Asian Conference on Machine Learning}, pp.\  49--64. PMLR, 2021.

\bibitem[Defazio et~al.(2024)Defazio, Yang, Khaled, Mishchenko, Mehta, and Cutkosky]{defazio2024road}
Defazio, A., Yang, X., Khaled, A., Mishchenko, K., Mehta, H., and Cutkosky, A.
\newblock The road less scheduled.
\newblock In \emph{NeurIPS}, 2024.

\bibitem[DeVries(2017)]{devries2017improved}
DeVries, T.
\newblock Improved regularization of convolutional neural networks with cutout.
\newblock \emph{arXiv preprint arXiv:1708.04552}, 2017.

\bibitem[D'Orazio et~al.(2023)D'Orazio, Loizou, Laradji, and Mitliagkas]{d2023stochastic}
D'Orazio, R., Loizou, N., Laradji, I.~H., and Mitliagkas, I.
\newblock Stochastic mirror descent: Convergence analysis and adaptive variants via the mirror stochastic polyak stepsize.
\newblock \emph{Transactions on Machine Learning Research}, 2023.

\bibitem[Duchi et~al.(2011)Duchi, Hazan, and Singer]{duchi2011adaptive}
Duchi, J., Hazan, E., and Singer, Y.
\newblock Adaptive subgradient methods for online learning and stochastic optimization.
\newblock \emph{Journal of Machine Learning Research}, 12\penalty0 (7), 2011.

\bibitem[Garrigos \& Gower(2023)Garrigos and Gower]{garrigos2023handbook}
Garrigos, G. and Gower, R.~M.
\newblock Handbook of convergence theorems for (stochastic) gradient methods.
\newblock \emph{arXiv preprint arXiv:2301.11235}, 2023.

\bibitem[Garrigos et~al.(2023)Garrigos, Gower, and Schaipp]{garrigos2023function}
Garrigos, G., Gower, R.~M., and Schaipp, F.
\newblock Function value learning: Adaptive learning rates based on the polyak stepsize and function splitting in erm.
\newblock \emph{arXiv preprint arXiv:2307.14528}, 2023.

\bibitem[Gorbunov et~al.(2025)Gorbunov, Tupitsa, Choudhury, Aliev, Richt{\'a}rik, Horv{\'a}th, and Tak{\'a}{\v{c}}]{gorbunov2025methods}
Gorbunov, E., Tupitsa, N., Choudhury, S., Aliev, A., Richt{\'a}rik, P., Horv{\'a}th, S., and Tak{\'a}{\v{c}}, M.
\newblock Methods for convex $(l\_0, l\_1) $-smooth optimization: Clipping, acceleration, and adaptivity.
\newblock In \emph{ICLR}, 2025.

\bibitem[Gower et~al.(2021)Gower, Sebbouh, and Loizou]{gower2021sgd}
Gower, R., Sebbouh, O., and Loizou, N.
\newblock {SGD} for structured nonconvex functions: Learning rates, minibatching and interpolation.
\newblock In \emph{{AISTATS}}, 2021.

\bibitem[Gower et~al.(2022)Gower, Blondel, Gazagnadou, and Pedregosa]{gower2022cutting}
Gower, R.~M., Blondel, M., Gazagnadou, N., and Pedregosa, F.
\newblock Cutting some slack for {SGD} with adaptive polyak stepsizes.
\newblock \emph{arXiv preprint arXiv:2202.12328}, 2022.

\bibitem[Gower et~al.(2025)Gower, Garrigos, Loizou, Oikonomou, Mishchenko, and Schaipp]{gower2025analysis}
Gower, R.~M., Garrigos, G., Loizou, N., Oikonomou, D., Mishchenko, K., and Schaipp, F.
\newblock Analysis of an idealized stochastic polyak method and its application to black-box model distillation.
\newblock \emph{arXiv preprint arXiv:2504.01898}, 2025.

\bibitem[Guo et~al.(2022)Guo, Xu, Liu, Liu, Jiang, Mu, Zhang, Martin, Cheng, and Hu]{guo2022attention}
Guo, M.-H., Xu, T.-X., Liu, J.-J., Liu, Z.-N., Jiang, P.-T., Mu, T.-J., Zhang, S.-H., Martin, R.~R., Cheng, M.-M., and Hu, S.-M.
\newblock Attention mechanisms in computer vision: A survey.
\newblock \emph{Computational visual media}, 8\penalty0 (3):\penalty0 331--368, 2022.

\bibitem[Hastie et~al.(2009)Hastie, Tibshirani, and Friedman]{hastie2009elements}
Hastie, T., Tibshirani, R., and Friedman, J.~H.
\newblock \emph{The Elements of Statistical Learning: Data Mining, Inference, and Prediction}, volume~2.
\newblock Springer, 2009.

\bibitem[Hazan \& Kakade(2019)Hazan and Kakade]{hazan2019revisiting}
Hazan, E. and Kakade, S.
\newblock Revisiting the polyak step size.
\newblock \emph{arXiv preprint arXiv:1905.00313}, 2019.

\bibitem[He et~al.(2016)He, Zhang, Ren, and Sun]{he2016deep}
He, K., Zhang, X., Ren, S., and Sun, J.
\newblock Deep residual learning for image recognition.
\newblock In \emph{CVPR}, 2016.

\bibitem[Horv{\'a}th et~al.(2022)Horv{\'a}th, Mishchenko, and Richt{\'a}rik]{horvath2022adaptive}
Horv{\'a}th, S., Mishchenko, K., and Richt{\'a}rik, P.
\newblock Adaptive learning rates for faster stochastic gradient methods.
\newblock \emph{arXiv preprint arXiv:2208.05287}, 2022.

\bibitem[Islamov et~al.(2024)Islamov, Ajroldi, Orvieto, and Lucchi]{islamov2024loss}
Islamov, R., Ajroldi, N., Orvieto, A., and Lucchi, A.
\newblock Loss landscape characterization of neural networks without over-parametrization.
\newblock In \emph{NeurIPS}, 2024.

\bibitem[Jiang \& Stich(2023)Jiang and Stich]{jiang2023adaptive}
Jiang, X. and Stich, S.~U.
\newblock Adaptive {SGD} with polyak stepsize and line-search: Robust convergence and variance reduction.
\newblock In \emph{{NeurIPS}}, 2023.

\bibitem[Karimi et~al.(2016)Karimi, Nutini, and Schmidt]{karimi2016linear}
Karimi, H., Nutini, J., and Schmidt, M.
\newblock Linear convergence of gradient and proximal-gradient methods under the polyak-lojasiewicz condition.
\newblock In \emph{ECML PKDD}, 2016.

\bibitem[Kidambi et~al.(2018)Kidambi, Netrapalli, Jain, and Kakade]{kidambi2018insufficiency}
Kidambi, R., Netrapalli, P., Jain, P., and Kakade, S.
\newblock On the insufficiency of existing momentum schemes for stochastic optimization.
\newblock In \emph{Information Theory and Applications Workshop (ITA)}, pp.\  1--9. IEEE, 2018.

\bibitem[Kingma \& Ba(2015)Kingma and Ba]{kingma2014adam}
Kingma, D. and Ba, J.
\newblock Adam: A method for stochastic optimization.
\newblock In \emph{{ICLR}}, 2015.

\bibitem[Koloskova et~al.(2020)Koloskova, Loizou, Boreiri, Jaggi, and Stich]{koloskova2020unified}
Koloskova, A., Loizou, N., Boreiri, S., Jaggi, M., and Stich, S.
\newblock A unified theory of decentralized sgd with changing topology and local updates.
\newblock In \emph{{ICML}}, 2020.

\bibitem[Krizhevsky et~al.(2009)]{krizhevsky2009learning}
Krizhevsky, A. et~al.
\newblock {Learning Multiple Layers of Features from Tiny Images}.
\newblock Technical report, University of Toronto, 2009.

\bibitem[Li et~al.(2024)Li, Ma, Sun, Zhang, and Wen]{li2024stochastic}
Li, C., Ma, Z., Sun, D., Zhang, G., and Wen, J.
\newblock Stochastic iht with stochastic polyak step-size for sparse signal recovery.
\newblock \emph{IEEE Signal Processing Letters}, 2024.

\bibitem[Liang \& Rakhlin(2020)Liang and Rakhlin]{liang2020just}
Liang, T. and Rakhlin, A.
\newblock Just iterpolate: Kernel “ridgeless” regression can generalize.
\newblock \emph{The Annals of Statistics}, 48\penalty0 (3):\penalty0 1329--1347, 2020.

\bibitem[Liu \& Huang(2025)Liu and Huang]{liu2025stochastic}
Liu, C. and Huang, Y.
\newblock Stochastic polyak stepsize for sgd with variance reduction and momentum.
\newblock \emph{Optimization Letters}, pp.\  1--24, 2025.

\bibitem[Liu et~al.(2020)Liu, Gao, and Yin]{liu2020improved}
Liu, Y., Gao, Y., and Yin, W.
\newblock An improved analysis of stochastic gradient descent with momentum.
\newblock In \emph{{NeurIPS}}, 2020.

\bibitem[Loizou et~al.(2021)Loizou, Vaswani, Laradji, and Lacoste-Julien]{loizou2021stochastic}
Loizou, N., Vaswani, S., Laradji, I.~H., and Lacoste-Julien, S.
\newblock Stochastic polyak step-size for {SGD}: An adaptive learning rate for fast convergence.
\newblock In \emph{{AISTATS}}, 2021.

\bibitem[Ma \& Yarats(2019)Ma and Yarats]{ma2018quasi}
Ma, J. and Yarats, D.
\newblock Quasi-hyperbolic momentum and adam for deep learning.
\newblock In \emph{{ICLR}}, 2019.

\bibitem[Ma et~al.(2018)Ma, Bassily, and Belkin]{ma2018power}
Ma, S., Bassily, R., and Belkin, M.
\newblock The power of interpolation: Understanding the effectiveness of {SGD} in modern over-parametrized learning.
\newblock In \emph{{ICML}}, 2018.

\bibitem[Mukherjee et~al.(2024)Mukherjee, Loizou, and Stich]{mukherjee2024locally}
Mukherjee, S., Loizou, N., and Stich, S.~U.
\newblock Locally adaptive federated learning.
\newblock \emph{Transactions on Machine Learning Research}, 2024.

\bibitem[Nedi{\'c} \& Bertsekas(2001)Nedi{\'c} and Bertsekas]{nedic2001incremental}
Nedi{\'c}, A. and Bertsekas, D.~P.
\newblock Incremental subgradient methods for nondifferentiable optimization.
\newblock \emph{SIAM Journal on Optimization}, 12\penalty0 (1):\penalty0 109--138, 2001.

\bibitem[Oikonomou \& Loizou(2025)Oikonomou and Loizou]{oikonomou2025stochastic}
Oikonomou, D. and Loizou, N.
\newblock Stochastic polyak step-sizes and momentum: Convergence guarantees and practical performance.
\newblock \emph{ICLR}, 2025.

\bibitem[Orabona \& D'Orazio(2025)Orabona and D'Orazio]{orabona2025new}
Orabona, F. and D'Orazio, R.
\newblock New perspectives on the polyak stepsize: Surrogate functions and negative results.
\newblock In \emph{NeurIPS}, 2025.

\bibitem[Orvieto \& Xiao(2024)Orvieto and Xiao]{orvieto2024adaptive}
Orvieto, A. and Xiao, L.
\newblock An adaptive stochastic gradient method with non-negative gauss-newton stepsizes.
\newblock \emph{arXiv preprint arXiv:2407.04358}, 2024.

\bibitem[Orvieto et~al.(2022)Orvieto, Lacoste-Julien, and Loizou]{orvieto2022dynamics}
Orvieto, A., Lacoste-Julien, S., and Loizou, N.
\newblock Dynamics of {SGD} with stochastic polyak stepsizes: Truly adaptive variants and convergence to exact solution.
\newblock In \emph{{NeurIPS}}, 2022.

\bibitem[Peebles \& Xie(2023)Peebles and Xie]{peebles2023scalable}
Peebles, W. and Xie, S.
\newblock Scalable diffusion models with transformers.
\newblock In \emph{Proceedings of the IEEE/CVF international conference on computer vision}, pp.\  4195--4205, 2023.

\bibitem[Polyak(1964)]{polyak1964some}
Polyak, B.~T.
\newblock Some methods of speeding up the convergence of iteration methods.
\newblock \emph{{USSR} Computational Mathematics and Mathematical Physics}, 4\penalty0 (5):\penalty0 1--17, 1964.

\bibitem[Polyak(1987)]{polyak1987introduction}
Polyak, B.~T.
\newblock \emph{Introduction to Optimization}.
\newblock New York, Optimization Software, 1987.

\bibitem[Qiao \& Maros(2024)Qiao and Maros]{qiao2024sparse}
Qiao, T. and Maros, M.
\newblock Sparse polyak: an adaptive step size rule for high-dimensional m-estimation.
\newblock In \emph{NeurIPS}, 2024.

\bibitem[Robbins \& Monro(1951)Robbins and Monro]{robbins1951stochastic}
Robbins, H. and Monro, S.
\newblock A stochastic approximation method.
\newblock \emph{The Annals of Mathematical Statistics}, pp.\  400--407, 1951.

\bibitem[Schaipp et~al.(2023)Schaipp, Gower, and Ulbrich]{schaipp2023stochastic}
Schaipp, F., Gower, R.~M., and Ulbrich, M.
\newblock A stochastic proximal polyak step size.
\newblock \emph{Transactions on Machine Learning Research}, 2023.

\bibitem[Schaipp et~al.(2024)Schaipp, Ohana, Eickenberg, Defazio, and Gower]{schaipp2024momo}
Schaipp, F., Ohana, R., Eickenberg, M., Defazio, A., and Gower, R.~M.
\newblock Momo: Momentum models for adaptive learning rates.
\newblock In \emph{ICML}, 2024.

\bibitem[Schaipp et~al.(2025)Schaipp, H{\"a}gele, Taylor, Simsekli, and Bach]{schaipp2025surprising}
Schaipp, F., H{\"a}gele, A., Taylor, A., Simsekli, U., and Bach, F.
\newblock The surprising agreement between convex optimization theory and learning-rate scheduling for large model training.
\newblock In \emph{{ICML}}, 2025.

\bibitem[Sebbouh et~al.(2021)Sebbouh, Gower, and Defazio]{sebbouh2021almost}
Sebbouh, O., Gower, R.~M., and Defazio, A.
\newblock Almost sure convergence rates for stochastic gradient descent and stochastic heavy ball.
\newblock In \emph{{COLT}}, 2021.

\bibitem[Shi et~al.(2023)Shi, Sadiev, Loizou, Richt{\'a}rik, and Tak{\'a}{\v{c}}]{shi2023ai}
Shi, Z., Sadiev, A., Loizou, N., Richt{\'a}rik, P., and Tak{\'a}{\v{c}}, M.
\newblock Ai-sarah: Adaptive and implicit stochastic recursive gradient methods.
\newblock \emph{Transactions on Machine Learning Research}, 2023.

\bibitem[Shor(1985)]{shor1985minimization}
Shor, N.~Z.
\newblock \emph{Minimization Methods for Non-differentiable Functions}.
\newblock Springer Series in Optimization and Its Applications. Springer, Berlin, 1985.

\bibitem[Takezawa et~al.(2024)Takezawa, Bao, Sato, Niwa, and Yamada]{takezawa2024parameter}
Takezawa, Y., Bao, H., Sato, R., Niwa, K., and Yamada, M.
\newblock Parameter-free clipped gradient descent meets polyak.
\newblock In \emph{NeurIPS}, 2024.

\bibitem[Tan et~al.(2016)Tan, Ma, Dai, and Qian]{tan2016barzilai}
Tan, C., Ma, S., Dai, Y.-H., and Qian, Y.
\newblock Barzilai-borwein step size for stochastic gradient descent.
\newblock In \emph{{NeurIPS}}, 2016.

\bibitem[Vaswani et~al.(2017)Vaswani, Shazeer, Parmar, Uszkoreit, Jones, Gomez, Kaiser, and Polosukhin]{vaswani2017attention}
Vaswani, A., Shazeer, N., Parmar, N., Uszkoreit, J., Jones, L., Gomez, A.~N., Kaiser, L., and Polosukhin, I.
\newblock Attention is all you need.
\newblock \emph{Advances in neural information processing systems}, 30, 2017.

\bibitem[Vaswani et~al.(2019)Vaswani, Mishkin, Laradji, Schmidt, Gidel, and Lacoste-Julien]{vaswani2019painless}
Vaswani, S., Mishkin, A., Laradji, I., Schmidt, M., Gidel, G., and Lacoste-Julien, S.
\newblock Painless stochastic gradient: Interpolation, line-search, and convergence rates.
\newblock In \emph{{NeurIPS}}, 2019.

\bibitem[Wang et~al.(2023)Wang, Johansson, and Zhang]{wang2023generalized}
Wang, X., Johansson, M., and Zhang, T.
\newblock Generalized polyak step size for first order optimization with momentum.
\newblock In \emph{{ICML}}, 2023.

\bibitem[Zamani \& Glineur(2024)Zamani and Glineur]{zamani2024exact}
Zamani, M. and Glineur, F.
\newblock Exact convergence rate of the subgradient method by using polyak step size.
\newblock \emph{arXiv preprint arXiv:2407.15195}, 2024.

\bibitem[Zhang et~al.(2021)Zhang, Bengio, Hardt, Recht, and Vinyals]{zhang2021understanding}
Zhang, C., Bengio, S., Hardt, M., Recht, B., and Vinyals, O.
\newblock Understanding deep learning (still) requires rethinking generalization.
\newblock \emph{Communications of the ACM}, 64\penalty0 (3):\penalty0 107--115, 2021.

\bibitem[Zhang et~al.(2020)Zhang, He, Sra, and Jadbabaie]{zhanggradient}
Zhang, J., He, T., Sra, S., and Jadbabaie, A.
\newblock Why gradient clipping accelerates training: A theoretical justification for adaptivity.
\newblock In \emph{ICLR}, 2020.

\bibitem[Zhang et~al.(2025)Zhang, Jin, and Gu]{zhang2025adaptive}
Zhang, J., Jin, C., and Gu, Y.
\newblock Adaptive polyak step-size for momentum accelerated stochastic gradient descent with general convergence guarantee.
\newblock \emph{IEEE Transactions on Signal Processing}, 2025.

\end{thebibliography}
\bibliographystyle{icml2026}

%%%%%%%%%%%%%%%%%%%%%%%%%%%%%%%%%%%%%%%%%%%%%%%%%%%%%%%%%%%%%%%%%%%%%%%%%%%%%%%
%%%%%%%%%%%%%%%%%%%%%%%%%%%%%%%%%%%%%%%%%%%%%%%%%%%%%%%%%%%%%%%%%%%%%%%%%%%%%%%
% APPENDIX
%%%%%%%%%%%%%%%%%%%%%%%%%%%%%%%%%%%%%%%%%%%%%%%%%%%%%%%%%%%%%%%%%%%%%%%%%%%%%%%
%%%%%%%%%%%%%%%%%%%%%%%%%%%%%%%%%%%%%%%%%%%%%%%%%%%%%%%%%%%%%%%%%%%%%%%%%%%%%%%
\newpage
\appendix
\onecolumn
\part*{Supplementary Material}

The Supplementary Material is organized as follows: In \Cref{sec:further-related}, we have more details on adaptive Polyak-type step sizes. \Cref{sec:proofs}, presents the proofs of the theoretical guarantees from the main paper. In \Cref{sec:more-exps}, we provide additional experiments. In \Cref{sec:extra_sens} we present more sensitivity analysis plots for DL and in \Cref{sec:smoothing_trick} we present a practical, smoothed way of calculating the safeguard $M$, together with convergence guarantees for time-varying safeguards.

\section{Further Related Work}
\label{sec:further-related}

Recent advances in Polyak-type step sizes have extended their theory and applicability in both smooth and non-smooth settings. For example, \citet{horvath2022adaptive} introduced StoPS and GraDS, stochastic adaptive step size schemes based on Polyak's rule (and gradient "diversity"), and proved that these methods achieve near-deterministic convergence rates for strongly convex problems. \citet{gower2022cutting} recast SPS in an online learning framework, showing that SPS and its variants can be viewed as passive-aggressive algorithms, and proposed a single slack variable technique to stabilize the step size in non-interpolated models. This slack-based SPS variant comes with convergence guarantees and improved robustness in practice. To handle regularization, \citet{schaipp2023stochastic} developed ProxSPS, a proximal Polyak step size that only requires a lower bound on the loss (not the entire objective), making it easier to tune and more stable under weight decay. Empirically, ProxSPS performs on par with well-tuned optimizers like AdamW, while requiring minimal tuning. In the non-smooth regime, \citet{zamani2024exact} established a exact last-iterate convergence rate of the subgradient method with a Polyak step size, showing an $O(N^{-1/4})$ rate that is tight, and proposed an adaptive Polyak step size variant that attains the optimal $O(N^{-1/2})$ rate for convex problems. 

Beyond the works discussed above, several recent papers further extend stochastic Polyak step sizes. \citet{liu2025stochastic} combine the Polyak rule with both variance reduction and momentum, obtaining a stochastic Polyak stepsize for SGD with variance reduction and momentum that improves robustness on ill-conditioned problems. Building on SPS in structured settings, stochastic Polyak steps have been integrated into sparse recovery and high-dimensional statistics: a stochastic IHT method with Polyak step sizes has been proposed for sparse signal recovery, see \citet{li2024stochastic}, while the "Sparse Polyak" rule, \citet{qiao2024sparse}, tailors an adaptive Polyak-type step to high-dimensional M-estimation with sparsity constraints. Finally, preconditioned Polyak step sizes have been studied in the context of SGD, where the SPS denominator is modified by diagonal or adaptive preconditioners, bridging Polyak-type updates with AdaGrad/Adam-style geometry, see \citet{abdukhakimov2024stochastic}. 

Complementary lines of work exploit loss-landscape structure and Gauss–Newton-type constructions to design related adaptive stepsizes. In \citet{islamov2024loss}, the authors analyze neural-network objectives beyond the heavily over-parameterized regime and identify geometric conditions (in the spirit of Polyak–Lojasiewicz-type properties) under which gradient methods enjoy fast convergence, thereby providing a landscape-based justification for adaptive step rules closely related to Polyak’s idea. \citet{shi2023ai} provides an adaptive SARAH-type algorithm that explores and adapts to the local geometry. In a different direction, \citet{orvieto2024adaptive} proposes an SGD variant whose effective learning rate is determined by a non-negative Gauss–Newton scaling of the loss; the resulting method exhibits an automatic warm-up and decay behaviour and is analyzed for smooth convex and non-convex problems, with explicit comparisons to Polyak-style and SPS-type stepsizes.

\newpage
\section{Proofs}
\label{sec:proofs}

In this section, we present the proofs of the main theoretical results presented in the main paper, i.e. \Cref{prop:clip-equiv}  and the convergence guarantees of \ref{eq:sgd-sps-M} and \ref{eq:ima-sps-M}. We restate the main theorems here for completeness.

\subsection{Proof of \Cref{prop:clip-equiv}}

\begin{proposition}
    \ref{eq:ssm} with \ref{eq:sgd-sps-M} and $M=c^2$ is algebraically equivalent to \ref{eq:clipped-ssm} with the \emph{adaptive} step size $\tilde{\g}_t=\frac{f_i(x^t)-\ell_i^*}{c\max\{c,\|g_i^t\|\}}$. 
\end{proposition}

\begin{proof}
    We have
    \begin{align*}
        x^{t+1}
        &=x^t-\g_tg_i^t=x^t-\frac{f_i(x^t)-\ell_i^*}{\max\{c^2,\|g_i^t\|^2\}}g_i^t\\
        &=x^t-\frac{f_i(x^t)-\ell_i^*}{\min\left\{1,\frac{c}{\|g_i^t\|}\right\}\max\{c^2,\|g_i^t\|^2\}}\min\left\{1,\frac{c}{\|g_t\|}\right\}g_i^t\\
        &=x^t-\frac{f_i(x^t)-\ell_i^*}{\min\left\{1,\frac{c}{\|g_i^t\|}\right\}\max\{c^2,\|g_i^t\|^2\}}\tn{clip}_c\left(g_i^t\right)\\
        &=x^t-\frac{f_i(x^t)-\ell_i^*}{c\max\{c,\|g_i^t\|\}}\tn{clip}_c\left(g_i^t\right),
    \end{align*}
    where the last equality follows by discriminating cases:
    \begin{itemize}
        \item If $\|g_i^t\|\leq c$, then $\max\{c,\|g_i^t\|\}=c$, so:
        \begin{align*}
            &\min\left\{1,\frac{c}{\|g_i^t\|}\right\}\max\{c^2,\|g_i^t\|^2\}=1\cdot c^2=c^2\qquad\text{and}\\
            &c\max\{c,\|g_i^t\|\}=c\cdot c=c^2.
        \end{align*}

        \item If $\|g_i^t\|>c$, then $\max\{c,\|g_i^t\|\}=\|g_i^t\|$, so:
        \begin{align*}
            &\min\left\{1,\frac{c}{\|g_i^t\|}\right\}\max\{c^2,\|g_i^t\|^2\}=\frac{c}{\|g_i^t\|}\cdot\|g_i^t\|^2=c\|g_i^t\|\qquad\text{and}\\
            &c\max\{c,\|g_i^t\|\}=c\|g_i^t\|.
        \end{align*}
    \end{itemize}
    This completes the proof. 
\end{proof}

\subsection{Proof of \Cref{thm:sgd-M-lip}}

\begin{theorem}
    Consider the iterates of \ref{eq:ssm} with the step size (\ref{eq:sgd-sps-M}). Then 
    \begin{align*}
        \E[f(\overline{x}^T)-f(x^*)]\leq\frac{\sqrt{\max\{G^2,M\}}\|x^0-x^*\|}{\sqrt{T}}+\sqrt{\frac{\max\{G^2,M\}}{M}}\s^2,
    \end{align*}
    where $\overline{x}^T=\frac{1}{T}\sum_{t=0}^{T-1}x^t$. 
\end{theorem}

\begin{proof}[Proof of \Cref{thm:sgd-M-lip}]
    We have 
    \begin{align*}
        \label{eq:main-expansion}
        \q\|x^{t+1}-x^*\|^2-\|x^t-x^*\|^2\overset{\ref{eq:ssm}}&{=}-2\g_t\langle g_i^t, x^t-x^*\rangle+\g_t^2\|g_i^t\|^2\\
        \overset{\text{convexity}}&{\leq}-2\g_t[f_i(x^t)-f_i(x^*)]+\g_t^2\|g_i^t\|^2\\
        \overset{SPS_{safe}}&{=}-\frac{2[f_i(x^t)-\ell_i^*][f_i(x^t)-f_i(x^*)]}{\max\{\|g_i^t\|^2,M\}}+\frac{[f_i(x^t)-\ell_i^*]^2}{(\max\{\|g_i^t\|^2,M\})^2}\|g_i^t\|^2\\
        &=-\frac{2[f_i(x^t)-\ell_i^*][f_i(x^t)-f_i(x^*)]}{\max\{\|g_i^t\|^2,M\}}+\frac{[f_i(x^t)-\ell_i^*]^2}{\max\{\|g_i^t\|^2,M\}}\frac{\|g_i^t\|^2}{\max\{\|g_i^t\|^2,M\}}\\
        \overset{(\star)}&{\leq}-\frac{2[f_i(x^t)-\ell_i^*][f_i(x^t)-f_i(x^*)]}{\max\{\|g_i^t\|^2,M\}}+\frac{[f_i(x^t)-\ell_i^*]^2}{\max\{\|g_i^t\|^2,M\}}\\
        &=\frac{-2[f_i(x^t)-\ell_i^*][f_i(x^t)-f_i(x^*)]+[f_i(x^t)-\ell_i^*]^2}{\max\{\|g_i^t\|^2,M\}}\\
        \overset{(\star\star)}&{=}\frac{(f_i(x^*)-\ell_i^*)^2-(f_i(x^t)-f_i(x^*))^2}{\max\{\|g_i^t\|^2,M\}}\\
        &=-\frac{(f_i(x^t)-f_i(x^*))^2}{\max\{\|g_i^t\|^2,M\}}+\frac{(f_i(x^*)-\ell_i^*)^2}{\max\{\|g_i^t\|^2,M\}}\\
        \overset{(\star\star\star)}&{\leq}-\frac{(f_i(x^t)-f_i(x^*))^2}{\max\{G^2,M\}}+\frac{(f_i(x^*)-\ell_i^*)^2}{M}.\numberthis
    \end{align*}
    The inequality $(\star)$ follows from the fact that $\max\{\|g_i^t\|^2,M\}\geq\|g_i^t\|^2$, the equality $(\star\star)$ follows from the identity $-2xy+y^2=(x-y)^2-x^2$ with $x=f_i(x^t)-f_i(x^*)$ and $y=f_i(x^t)-\ell_i^*$. Finally the inequality $(\ref{eq:main-expansion})$ follows from $\max\{\|g_i^t\|^2,M\}\geq M$ and $\|g_i^t\|\leq G$. Taking expectation conditional on $x^t$ at $(\ast)$ we get 
    \begin{align*}
        \E[\|x^{t+1}-x^*\|^2|x^t]-\|x^t-x^*\|^2
        &\leq-\frac{\E\left([f_i(x^t)-f_i(x^*)]\right)^2}{\max\{G^2,M\}}+\frac{\s^4}{M}\\
        &\leq-\frac{(\E[f_i(x^t)-f_i(x^*)])^2}{\max\{G^2,M\}}+\frac{\s^4}{M}\\
        &=-\frac{[f(x^t)-f(x^*)]^2}{\max\{G^2,M\}}+\frac{\s^4}{M},
    \end{align*}
    where the second inequality follows by Jensen's inequality. Taking expectation 
    again and using the tower property we have 
    \begin{align*}
        \E\|x^{t+1}-x^*\|^2\leq\E\|x^t-x^*\|^2-\frac{\E[f(x^t)-f(x^*)]^2}
        {\max\{G^2,M\}}+\frac{\s^4}{M},
    \end{align*}
    or equivalently, after rearranging: 
    \begin{align}
        \label{eq:before-tele}
        \E[f(x^t)-f(x^*)]^2\leq\max\{G^2,M\}\E\|x^t-x^*\|^2-\max\{G^2,M\}\E\|x^{t+1}-x^*\|^2+\frac{\max\{G^2,M\}\s^4}{M}.
    \end{align}
    Now summing up $(\ref{eq:before-tele})$ for $t=0,\dots,T-1$ and telescoping we have 
    \begin{align*}
        \label{eq:final}
        \sum_{t=0}^{T-1}\E[f(x^t)-f(x^*)]^2
        &\leq\sum_{t=0}^{T-1}\left[\max\{G^2,M\}\E\|x^t-x^*\|^2-\max\{G^2,M\}\E\|x^{t+1}-x^*\|^2+\frac{\max\{G^2,M\}\s^4}{M}\right]\\
        &=\max\{G^2,M\}\|x^0-x^*\|^2-\max\{G^2,M\}\E\|x^T-x^*\|^2+T\frac{\max\{G^2,M\}\s^4}{M}\\
        &\leq\max\{G^2,M\}\|x^0-x^*\|^2+T\frac{\max\{G^2,M\}\s^4}{M}.\numberthis
    \end{align*}
    Now by Jensen's inequality (twice) we get 
    \begin{align*}
        \E[f(\overline{x}^T)-f(x^*)]
        \overset{\textnormal{Jensen}}&{\leq}\frac{1}{T}\sum_{t=0}^{T-1}\E[f(x^t)-f(x^*)]\\
        \overset{\textnormal{Jensen}}&{\leq}\sqrt{\frac{1}{T}\sum_{t=0}^{T-1}\E[f(x^t)-f(x^*)]^2}\\
        \overset{(\ref{eq:final})}&{\leq}\sqrt{\frac{\max\{G^2,M\}\|x^0-x^*\|^2}{T}+\frac{\max\{G^2,M\}\s^4}{M}}\\
        &\leq\frac{\sqrt{\max\{G^2,M\}}\|x^0-x^*\|^2}{\sqrt{T}}+\sqrt{\frac{\max\{G^2,M\}}{M}}\s^2,
    \end{align*}
    because $\sqrt{x+y}\leq\sqrt{x}+\sqrt{y}$. This completes the proof.
\end{proof}

\subsection{Proofs of \Cref{thm:ima-M-lip-last,thm:ima-M-lip}}

\subsubsection{Preliminary Lemmas}
Here we provide the two auxiliary lemmas we will use in the following proofs. 

\begin{lemma}[\citep{gower2025analysis}: Lem. B.3]
    \label{lem:titu}
    For any random variable $X$ and positive-valued random variable $Y$, it holds 
    \begin{align*}
        \E\left[\frac{(X)_+^2}{Y}\right]\geq\frac{(\E X)_+^2}{\E Y}.
    \end{align*}
\end{lemma}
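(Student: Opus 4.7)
The plan is to reduce the claim to the familiar Cauchy--Schwarz (or Engel / Titu) inequality $\E[Z^2/Y]\ge(\E Z)^2/\E Y$ applied to the non-negative random variable $Z=(X)_+$, and then use monotonicity of the positive-part map to transfer the bound from $\E Z$ to $(\E X)_+$. This two-step structure cleanly separates the analytic content (Cauchy--Schwarz) from the algebraic content (handling the truncation).

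First I would set $Z:=(X)_+\ge 0$ and apply Cauchy--Schwarz to the pair $Z/\sqrt{Y}$ and $\sqrt{Y}$, which is valid since $Y>0$ almost surely: this yields
\begin{align*}
    (\E Z)^2=\left(\E\left[\frac{Z}{\sqrt{Y}}\sqrt{Y}\right]\right)^2\le\E\left[\frac{Z^2}{Y}\right]\E[Y],
\end{align*}
so that $\E[(X)_+^2/Y]\ge(\E[(X)_+])^2/\E[Y]$. This step is standard and requires only $Y>0$ and integrability of the quantities involved.

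Second, I would argue that $\E[(X)_+]\ge(\E X)_+$. Pointwise one has $(X)_+\ge X$ and $(X)_+\ge 0$, so taking expectations gives $\E[(X)_+]\ge\E X$ and $\E[(X)_+]\ge 0$, which together force $\E[(X)_+]\ge\max\{\E X,0\}=(\E X)_+$. Since both sides are non-negative I may square without flipping the inequality to obtain $(\E[(X)_+])^2\ge((\E X)_+)^2$, and chaining with the bound from the first step finishes the proof.

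The only subtle point, and the one I would be most careful about, is precisely the direction of the comparison in the second step: Jensen gives an inequality for convex functions of $\E X$ but here we want to compare a convex function of the expectation to the expectation of the convex function evaluated elsewhere, so I want to avoid accidentally invoking Jensen in the wrong direction. Handling it via the two pointwise bounds $(X)_+\ge X$ and $(X)_+\ge 0$ as above sidesteps that trap entirely and keeps the argument elementary.
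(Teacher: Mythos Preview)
Your proof is correct. The Cauchy--Schwarz step with the splitting $Z/\sqrt{Y}\cdot\sqrt{Y}$ is exactly Titu's (Engel) form, and your handling of the positive part via the two pointwise bounds $(X)_+\ge X$ and $(X)_+\ge 0$ is clean and avoids any risk of running Jensen the wrong way.

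As for comparison: the paper does not actually prove this lemma. It is stated as a preliminary result and cited directly from \citet{gower2025analysis} (Lemma~B.3 there), so there is no in-paper argument to compare against. The label \texttt{lem:titu} in the source strongly suggests the intended proof is precisely the Titu/Engel route you took, so your approach is almost certainly the one the cited reference uses as well.
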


\begin{lemma}[\citep{gower2025analysis}: Lem. C.3]
    \label{lem:bergman}
    For any $x^{t-1},x^t,x^*\in\R^d$, any $\l_t\geq0$, and any subgradient $g^t\in\partial f(x^t)$, it holds
    \begin{align*}
        f(x^t)-f(x^*)&+\l_t\langle g^t,x^t-x^{t-1}\rangle\\
        &=(1+\l_t)[f(x^t)-f(x^*)]-\l_t[f(x^{t-1})-f(x^*)]+\l_tB_f(x^{t-1},x^t),
    \end{align*}
    where $B_f(x^{t-1},x^t)=f(x^{t-1})-f(x^t)-\langle g^t,x^{t-1}-x^t\rangle$ is the Bregman divergence with respect to $g^t$.
\end{lemma}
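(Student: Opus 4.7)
}

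The identity is purely algebraic, so my plan is to start from the right-hand side, expand the Bregman divergence term using the defining formula $B_f(x^{t-1},x^t)=f(x^{t-1})-f(x^t)-\langle \partial f(x^t),x^{t-1}-x^t\rangle$, and collect like terms until the left-hand side drops out. There is no analysis involved, no convexity, and no probabilistic step; the only ingredient is the definition of $B_f$ given in the statement.

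Concretely, the first step is to substitute $\l_t B_f(x^{t-1},x^t)=\l_t f(x^{t-1})-\l_t f(x^t)+\l_t\langle \partial f(x^t),x^t-x^{t-1}\rangle$ (flipping the sign inside the inner product for later convenience) into the RHS. After this substitution the terms involving $f(x^{t-1})$ cancel: the expression $-\l_t[f(x^{t-1})-f(x^*)]$ contributes $-\l_t f(x^{t-1})+\l_t f(x^*)$, and this is precisely cancelled by the $+\l_t f(x^{t-1})$ coming from the expansion of $B_f$, while the $+\l_t f(x^*)$ pairs with the $-\l_t f(x^*)$ inside the $(1+\l_t)[f(x^t)-f(x^*)]$ term to cancel as well.

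In the second step I simply collect the remaining $f(x^t)$ contributions. The $(1+\l_t)f(x^t)$ coming from the first group combines with the $-\l_t f(x^t)$ coming from the Bregman expansion to yield exactly $f(x^t)$. All that is left is $f(x^t)-f(x^*)+\l_t\langle \partial f(x^t),x^t-x^{t-1}\rangle$, which matches the LHS. There is no real obstacle here; the only place where one has to be slightly careful is sign bookkeeping inside the inner product when rewriting $\langle \partial f(x^t),x^{t-1}-x^t\rangle=-\langle \partial f(x^t),x^t-x^{t-1}\rangle$, so that the final inner product has the sign required on the LHS.

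Because the identity holds for arbitrary $x^{t-1},x^t,x^*$ and arbitrary $\l_t\geq 0$ (in fact for any real $\l_t$), no assumption on convexity of $f$ or on a particular choice of subgradient is needed beyond the fact that $\partial f(x^t)$ appears in both sides consistently. The whole argument is a one-shot expansion, and in a written proof I would present it as a short chain of equalities starting from the RHS and ending at the LHS.
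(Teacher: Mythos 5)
Your proof is correct: expanding $\l_tB_f(x^{t-1},x^t)=\l_t f(x^{t-1})-\l_t f(x^t)+\l_t\langle\partial f(x^t),x^t-x^{t-1}\rangle$ and collecting terms does reduce the right-hand side exactly to $f(x^t)-f(x^*)+\l_t\langle\partial f(x^t),x^t-x^{t-1}\rangle$, and you are right that neither convexity nor the sign restriction $\l_t\geq 0$ is actually used. The paper itself gives no proof of this statement — it imports it verbatim as Lemma C.3 of \citet{gower2025analysis} — so your one-shot algebraic verification is precisely the argument that reference (and any self-contained write-up) would supply.
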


\subsubsection{Proof of \Cref{thm:ima-M-lip}}

\begin{theorem}
    Consider the iterates of \ref{eq:ima} with the step size (\ref{eq:ima-sps-M}) and let $(\l_t)_{t>0}$ be a non-increasing sequence of nonnegative reals. Then
    \begin{align*}
        \E[f(\overline{x}^T)-f(x^*)]+\sum_{t=0}^{T-1}\frac{\l_t}{T}\E[B_f(x^{t-1},x^t)]
        \leq\frac{\sqrt{\max\{G^2,M\}}\|x^0-x^*\|}{\sqrt{T}}+
        \sqrt{\frac{\max\{G^2,M\}}{M}}\s^2,
    \end{align*}
    where $\overline{x}^T=\frac{1}{T}\sum_{t=0}^{T-1}x^t$ and $B_f$ is the Bregman divergence defined in \Cref{rem:subgradient}.
\end{theorem}

\begin{proof}
    We have 
    \begin{align*}
        \|z^{t+1}-x^*\|^2-\|z^t-x^*\|^2
        \overset{(\ref{eq:ima}}&{=}-2\h_t\langle g_i^t,z^t-z^*\rangle+\h_t^2\|g_i^t\|^2\\
        &=-2\h_t\langle g_i^t,x^t-x^*\rangle-2\h_t\l_t\langle g_i^t,x^t-x^{t-1}\rangle+\h_t^2\|g_i^t\|^2\\
        \overset{\text{convexity}}&{\leq}-2\h_t[f_i(x^t)-f_i(x^*)+\l_t\langle g_i^t,x^t-x^{t-1}\rangle]+\h_t^2\|g_i^t\|^2\\
        &=-\frac{2[f_i(x^t)-\ell_i^*+\l_t\langle g_i^t,x^t-x^{t-1}\rangle]_+
        [f_i(x^t)-f_i(x^*)+\l_t\langle g_i^t,x^t-x^{t-1}\rangle]}{\max\{\|
        g_i^t\|^2,M\}}\\
        &\q+\frac{[f_i(x^t)-\ell_i^*+\l_t\langle g_i^t,x^t-x^{t-1}\rangle]_+^2}
        {(\max\{\|g_i^t\|^2,M\})^2}\|g_i^t\|^2.
    \end{align*}
    Now for easier notation we set $q=f_i(x^t)+\l_t\langle g_i^t,x^t-x^{t-1}\rangle$, thus we continue:
    \begin{align*}
        \|z^{t+1}-x^*\|^2&-\|z^t-x^*\|^2\\
        &=\frac{-2(q-\ell_i^*)_+\cdot(q-f_i(x^*))}{\max\{\|g_i^t\|^2,M\}}
        +\frac{(q-\ell_i^*)_+^2}{\max\{\|g_i^t\|^2,M\}}\cdot
        \frac{\|g_i^t\|^2}{\max\{\|g_i^t\|^2,M\}}\\
        &\leq\frac{-2(q-\ell_i^*)_+\cdot(q-f_i(x^*))}{\max\{\|g_i^t\|^2,M\}}
        +\frac{(q-\ell_i^*)_+^2}{\max\{\|g_i^t\|^2,M\}}\\
        &=\frac{-2(q-\ell_i^*)_+\cdot(q-f_i(x^*))+(q-\ell_i^*)_+^2}
        {\max\{\|g_i^t\|^2,M\}}\\
        \overset{(\star)}&{\leq}\frac{(f_i(x^*)-\ell_i^*)^2-(q-f_i(x^*))_+^2}
        {\max\{\|g_i^t\|^2,M\}}\\
        &=-\frac{(f_i(x^t)-f_i(x^*)+\l_t\langle g_i^t,x^t-x^{t-1}\rangle)_+^2}
        {\max\{\|g_i^t\|^2,M\}}+\frac{[f_i(x^*)-\ell_i^*]^2}{\max\{\|
        g_i^t\|^2,M\}}\\
        \overset{(\star\star)}&{\leq}-\frac{[f_i(x^t)-f_i(x^*)+\l_t\langle g_i^t,x^t-x^{t-1}\rangle]_+^2}{\max\{G^2,M\}}+\frac{[f_i(x^*)-\ell_i^*]^2}{M}.
    \end{align*}
    Let's explain inequality $(\star)$, which is: 
    \begin{align*}
        -2(q-\ell_i^*)_+\cdot(q-f_i(x^*))+(q-\ell_i^*)_+^2\leq(f_i(x^*)-\ell_i^*)^2-(q-f_i(x^*))_+^2\q\q(\star)
    \end{align*}
    Note that $\ell_i^*\leq f_i(x^*)$ so 
    $q-\ell_i^*\geq q-f_i(x^*)$. Hence if $q-\ell_i^*\leq0$ inequality $(\star)$ 
    reduces to the obvious $0\leq[f_i(x^t)-\ell_i^*]^2$. Now assume that 
    $q-\ell_i^*>0$. Then 
    \begin{align*}
        -2(q-f_i^*)_+\cdot(q-f_i(x^*))+(q-f_i^*)_+^2
        &=-2(q-f_i^*)(q-f_i(x^*))+(q-f_i^*)^2\\
        &=(q-f_i^*-(q-f_i(x^*)))^2-(q-f_i(x^*))^2\\
        &=(f_i(x^*)-f_i^*)^2-(q-f_i(x^*))^2\\
        &\leq(f_i(x^*)-f_i^*)^2-(q-f_i(x^*))_+^2,
    \end{align*}
    as wanted. Now, inequality $(\star\star)$ follows from $\max\{\|g_i^t\|^2,M\}\geq M$ and $\max\{\|g_i^t\|^2,M\}\leq\max\{G^2,M\}$ because $f_i$ is 
    $G$-Lipschitz. 

    Now taking expectation, so that the cross term becomes $\E_i[\l_t\langle g_i^t,x^t-x^{t-1}\rangle\mid x^t]=\l_t\langle g^t,x^t-x^{t-1}\rangle$ with $g^t:=\E_i[g_i^t\mid x^t]\in\partial f(x^t)$ as in \Cref{rem:subgradient}, and using \Cref{lem:titu,lem:bergman}, we get
    \begin{align*}
        \E\|z^{t+1}-x^*\|^2
        &\leq\E\|z^t-x^*\|^2-\frac{\E[f(x^t)-f(x^*)+\l_t\langle g^t,x^t-x^{t-1}\rangle]_+^2}{\max\{G^2,M\}}+\frac{\s^4}{M}\\
        &=\E\|z^t-x^*\|^2-\frac{\E[(1+\l_t)[f(x^t)-f(x^*)]-
        \l_t[f(x^{t-1})-f(x^*)]+\l_tB_f(x^{t-1},x^t)]_+^2}{\max\{G^2,M\}}
        +\frac{\s^4}{M},
    \end{align*}
    so 
    \begin{align*}
        &\E[(1+\l_t)[f(x^t)-f(x^*)]-\l_t[f(x^{t-1})-f(x^*)]+\l_t
        B_f(x^{t-1},x^t)]_+^2\\
        &\leq \max\{G^2,M\}\E\|z^t-x^*\|^2-\max\{G^2,M\}\E\|z^{t+1}-x^*\|^2+
        \frac{\max\{G^2,M\}}{M}\s^4.
    \end{align*}
    Now let $\D_t=(1+\l_t)[f(x^t)-f(x^*)]-\l_t[f(x^{t-1})-f(x^*)]+
    \l_tB_f(x^{t-1},x^t)$, sum for $t=0,\dots,T-1$ and use Jensen to get
    \begin{align*}
        &\q\frac{\max\{G^2,M\}\|x^0-x^*\|^2}{T}+\frac{\max\{G^2,M\}}{M}\s^4\\
        &\geq\frac{1}{T}\sum_{t=0}^{T-1}\E[\D_t]_+^2\\
        &\geq\left(\frac{1}{T}\sum_{t=0}^{T-1}\E[\D_t]\right)_+^2,
    \end{align*}
    which means that 
    \begin{align*}
        \label{eq:ima-proof-1}
        \left(\frac{1}{T}\sum_{t=0}^{T-1}\E[\D_t]\right)_+
        &\leq\sqrt{\frac{G^2\|x^0-x^*\|^2}{T}+\frac{\max\{G^2,M\}}{M}\s^4}\\
        &\leq\frac{\sqrt{\max\{G^2,M\}}\|x^0-x^*\|}{\sqrt{T}}+\sqrt{\frac{\max
        \{G^2,M\}}{M}}\s^2.\numberthis
    \end{align*}
    Now 
    \begin{align*}
        \label{eq:ima-proof-2}
        \sum_{t=0}^{T-1}\E[\D_t]
        &=\sum_{t=0}^{T-1}(1+\l_t)\E[f(x^t)-f(x^*)]-\l_t\E[f(x^{t-1})-f(x^*)]+
        \l_t\E[B_f(x^{t-1},x^t)]\\
        &=\sum_{t=0}^{T-1}\l_t\E[B_f(x^{t-1},x^t)]+\sum_{t=0}^{T-1}\E[f(x^t)-
        f(x^*)]+\sum_{t=0}^{T-2}(\l_t-\l_{t+1})\E[f(x^t)-f(x^*)]\\
        &\q+\l_{T-1}\E[f(x^{T-1})-f(x^*)].\numberthis
    \end{align*}
    Since $(\l_t)_{t>0}$ is non-increasing we get
    \begin{align*}
        &\sum_{t=0}^{T-1}\l_t\E[B_f(x^{t-1},x^t)]+\sum_{t=0}^{T-1}\E[f(x^t)-
        f(x^*)]+\sum_{t=0}^{T-2}(\l_t-\l_{t+1})\E[f(x^t)-f(x^*)]\\
        &\q+\l_{T-1}\E[f(x^{T-1})-f(x^*)]\\
        &\geq\sum_{t=0}^{T-1}\l_t\E[B_f(x^{t-1},x^t)]+\sum_{t=0}^{T-1}\E[f(x^t)-
        f(x^*)]\\
        &\geq\sum_{t=0}^{T-1}\l_t\E[B_f(x^{t-1},x^t)]+T\cdot\E[f(\overline{x}^T)
        -f(x^*)],
    \end{align*}
    so, dividing by $T$, we get
    \begin{align*}
        \E[f(\overline{x}^T)-f(x^*)]+\sum_{t=0}^{T-1}\frac{\l_t}{T}\E[B_f(x^{t-1},
        x^t)]\leq\frac{\sqrt{\max\{G^2,M\}}\|x^0-x^*\|}{\sqrt{T}}+\sqrt{\frac
        {\max\{G^2,M\}}{M}}\s^2.
    \end{align*}
    This completes the proof.
\end{proof}

\subsubsection{Proof of \Cref{thm:ima-M-lip-last}}

\begin{theorem}
    Consider the iterates of \ref{eq:ima} with the step size (\ref{eq:ima-sps-M}) and let $\l_t=t$. Then 
    \begin{align*}
        \textstyle
        \E[f(x^{T-1})-f(x^*)]+\frac{1}{T}\sum_{t=0}^{T-1}t\E[B_f(x^{t-1},x^t)]
        \leq\frac{\sqrt{\max\{G^2,M\}}\|x^0-x^*\|}{\sqrt{T}}+\sqrt{\frac{\max
        \{G^2,M\}}{M}}\s^2.
    \end{align*}
\end{theorem}

\begin{proof}
    The proof is very similar to the proof of \Cref{thm:ima-M-lip}. Indeed by \Cref{eq:ima-proof-1} we have
    \begin{align*}
        \left(\frac{1}{T}\sum_{t=0}^{T-1}\E[\D_t]\right)_+\leq\frac{\sqrt{\max\{G^2,M\}}\|x^0-x^*\|}{\sqrt{T}}+\sqrt{\frac{\max\{G^2,M\}}{M}}\s^2,
    \end{align*}
    and by \Cref{eq:ima-proof-2} we get 
    \begin{align*}
        \sum_{t=0}^{T-1}\E[\D_t]
        &=\sum_{t=0}^{T-1}\l_t\E[B_f(x^{t-1},x^t)]+\sum_{t=0}^{T-1}\E[f(x^t)-
        f(x^*)]+\sum_{t=0}^{T-2}(\l_t-\l_{t+1})\E[f(x^t)-f(x^*)]\\
        &\q+\l_{T-1}\E[f(x^{T-1})-f(x^*)].
    \end{align*}
    Now since $\l_t=t$ we have 
    \begin{align*}
        &\q\sum_{t=0}^{T-1}\l_t\E[B_f(x^{t-1},x^t)]+\sum_{t=0}^{T-1}\E[f(x^t)-
        f(x^*)]+\sum_{t=0}^{T-2}(\l_t-\l_{t+1})\E[f(x^t)-f(x^*)]\\
        &\q+\l_{T-1}\E[f(x^{T-1})-f(x^*)]\\
        &=\sum_{t=0}^{T-1}t\E[B_f(x^{t-1},x^t)]+T\cdot\E[f(x^{T-1})-f(x^*)]\geq0,
    \end{align*}
    so we get 
    \begin{align*}
        \E[f(x^{T-1})-f(x^*)]+\sum_{t=0}^{T-1}\frac{t}{T}\E[B_f(x^{t-1},x^t)]
        \leq\frac{\sqrt{\max\{G^2,M\}}\|x^0-x^*\|}{\sqrt{T}}+\sqrt{\frac{\max
        \{G^2,M\}}{M}}\s^2.
    \end{align*}
    This completes the proof. 
\end{proof}

\newpage
\section{Further Numerical Experiments}
\label{sec:more-exps}

\subsection{More deep learning experiments and parameter settings}

In this section, we list the parameters, architectures and hardware that we used for the deep learning experiments. The information is collected in \Cref{tab:dl-params}. We also include some extra experiments (ResNet20 in CIFAR-100 and ResNet32 in CIFAR-10/100) in \Cref{fig:20_100,fig:32_10,fig:32_100}. 

\begin{table}[H]
    \centering
    \begin{tabular}{ll}
        \toprule
        Hyper-parameter & Value \\
        \midrule
        Datasets & CIFAR-10/100 \citep{krizhevsky2009learning} \\
        Architecture & ResNet 20/32 \citep{he2016deep}\\
        GPUs & 1x Nvidia RTX 6000 Ada Generation\\
        Batch-size & 128 \\
        Epochs & 100 \\
        Weight Decay & 0.0 \\
        \bottomrule
    \end{tabular}
    \caption{Experimental details}
    \label{tab:dl-params}
\end{table}

\begin{figure}[H]
	\centering
    \begin{subfigure}{0.37\columnwidth}
        \includegraphics[width=\textwidth]{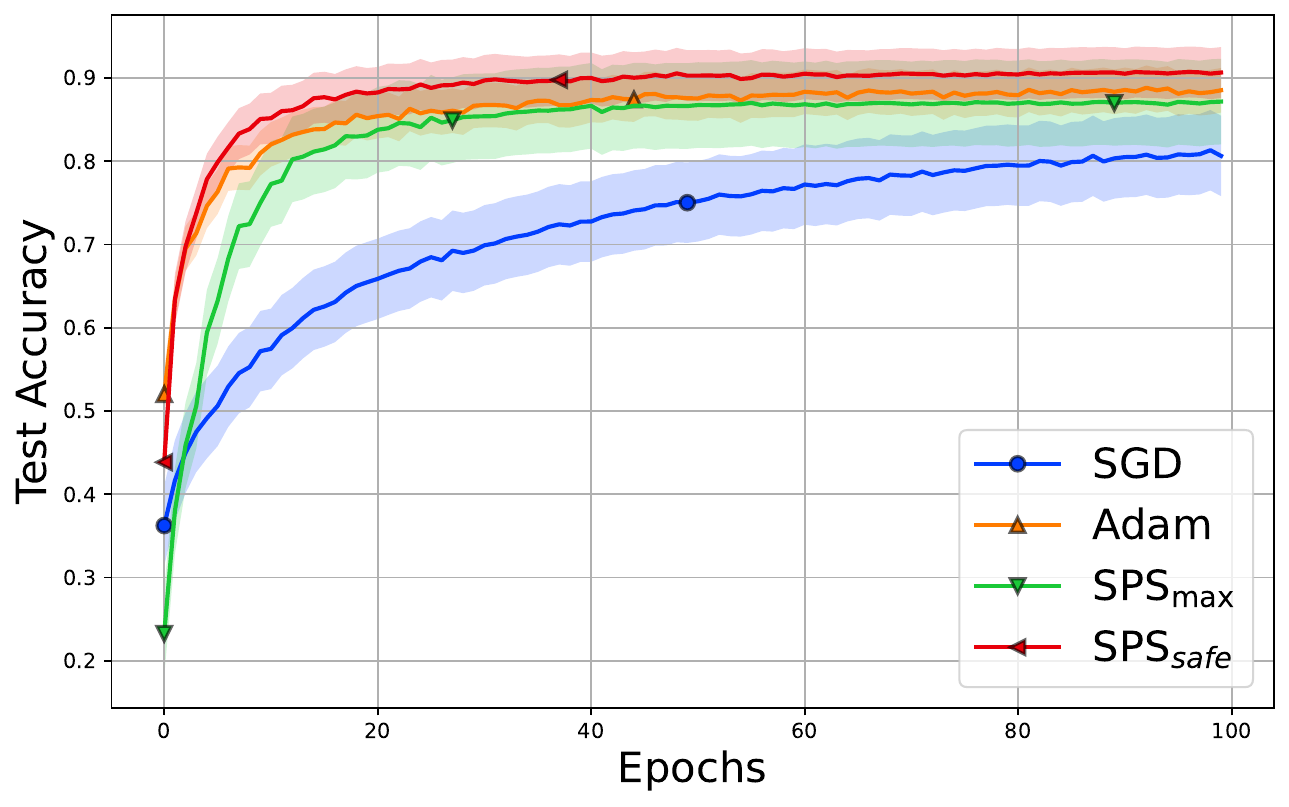}
    \end{subfigure}
    ~
    \begin{subfigure}{0.37\columnwidth}
        \includegraphics[width=\textwidth]{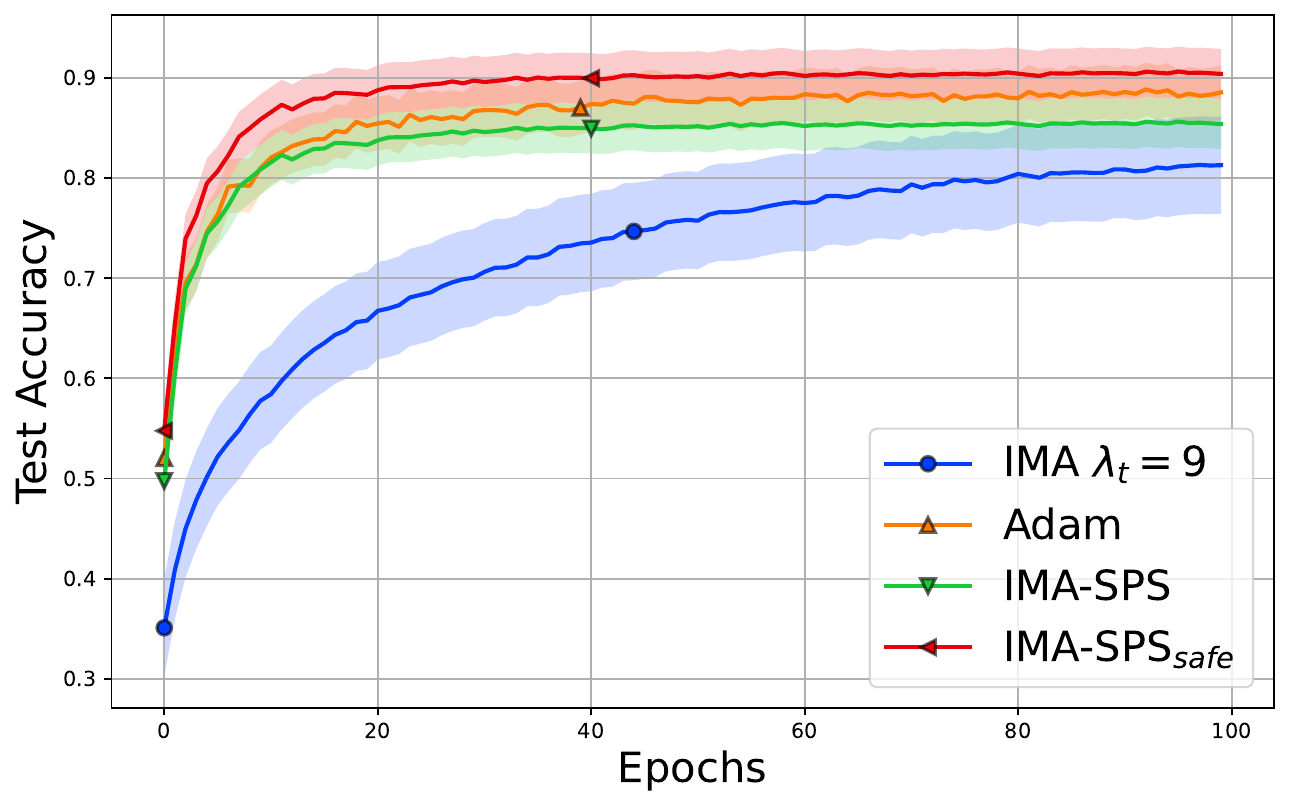}
    \end{subfigure}
    \caption{Test accuracy of ResNet20 on CIFAR-100. \textbf{Left:} \ref{eq:ssm}-based methods. \textbf{Right:} \ref{eq:ima}-based methods. }
    \label{fig:20_100}
\end{figure}

\begin{figure}[H]
	\centering
    \begin{subfigure}{0.37\columnwidth}
        \includegraphics[width=\textwidth]{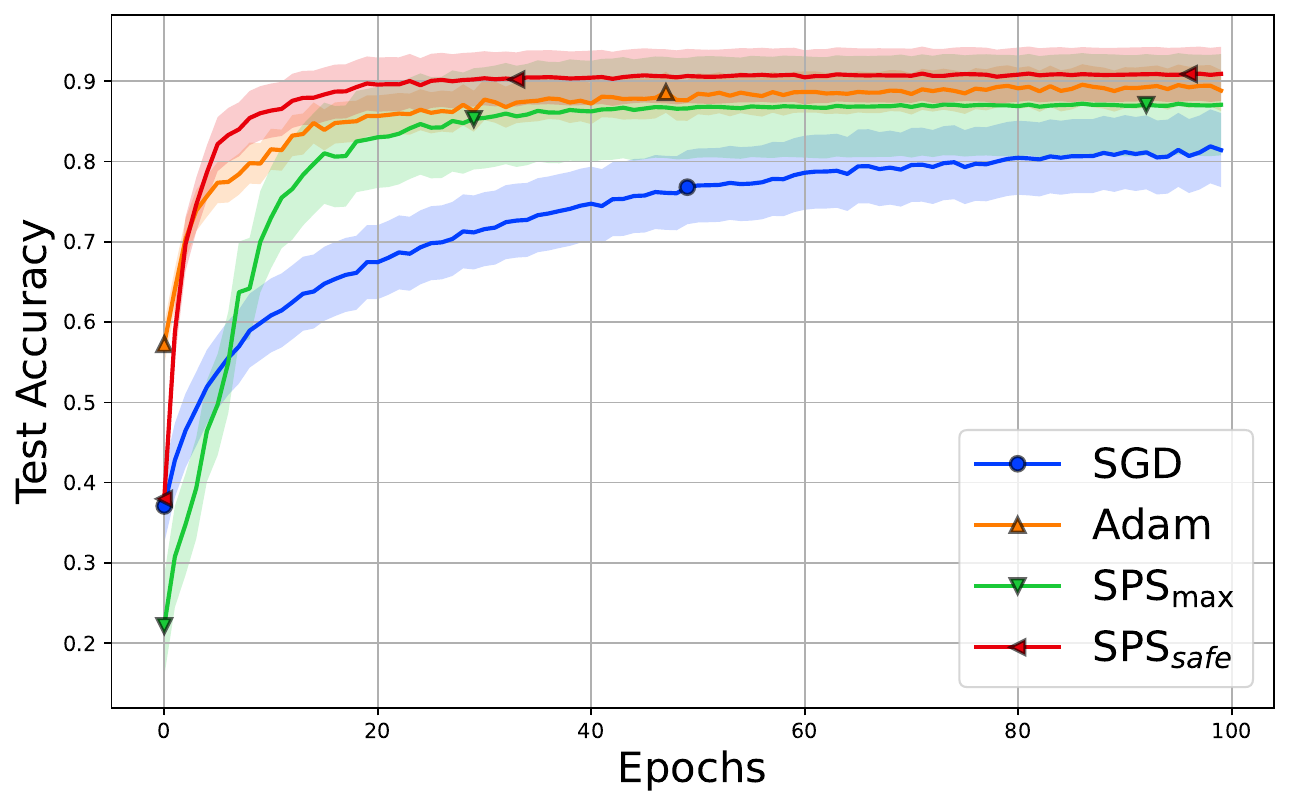}
    \end{subfigure}
    ~
    \begin{subfigure}{0.37\columnwidth}
        \includegraphics[width=\textwidth]{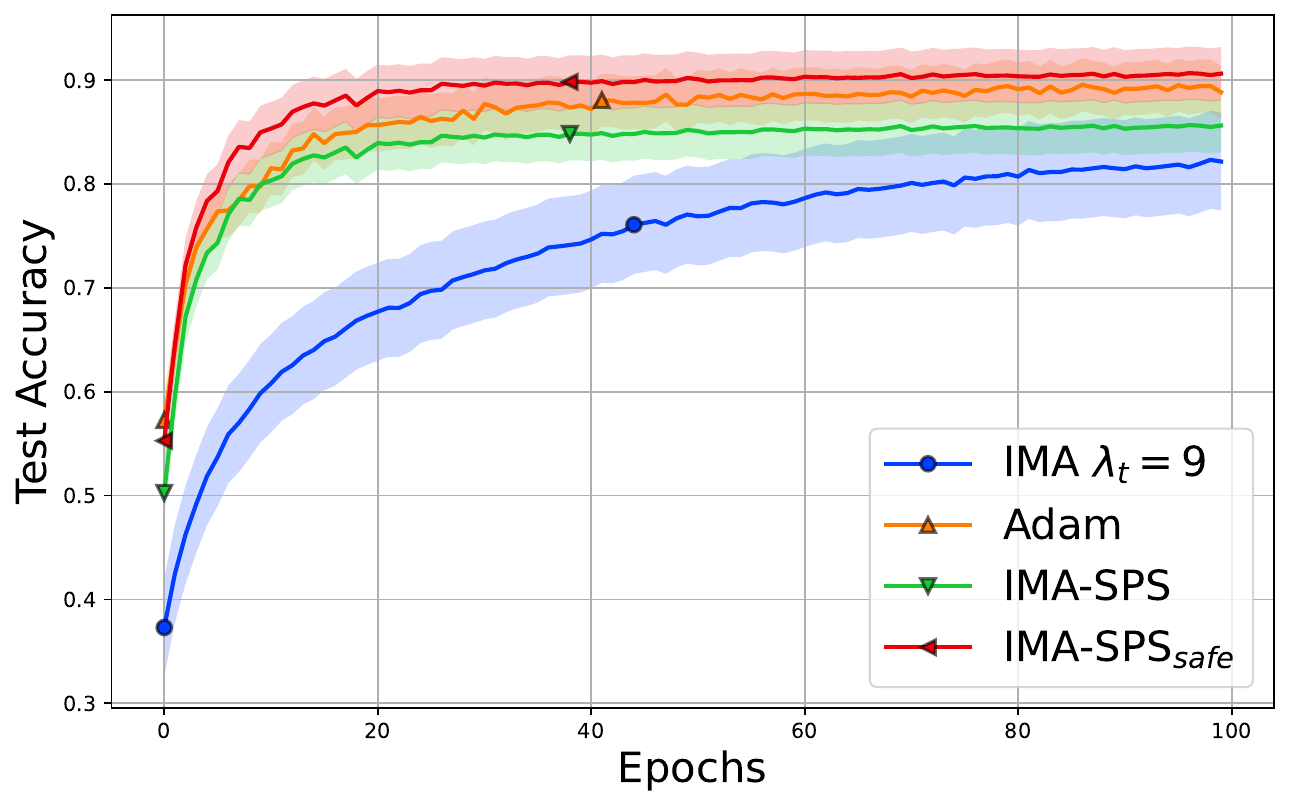}
    \end{subfigure}
    \caption{Test accuracy of ResNet32 on CIFAR-10. \textbf{Left:} \ref{eq:ssm}-based methods. \textbf{Right:} \ref{eq:ima}-based methods. }
    \label{fig:32_10}
\end{figure}

\begin{figure}[H]
	\centering
    \begin{subfigure}{0.37\columnwidth}
        \includegraphics[width=\textwidth]{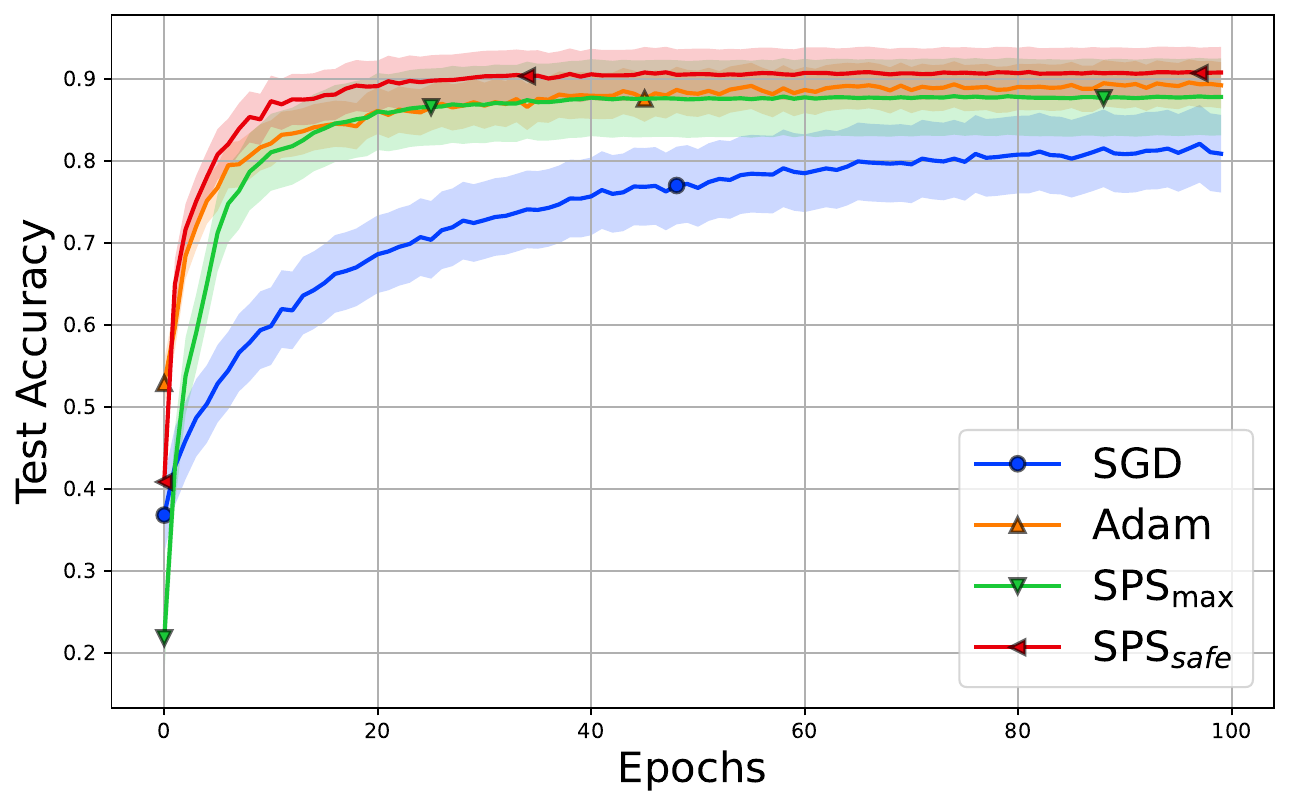}
    \end{subfigure}
    ~
    \begin{subfigure}{0.37\columnwidth}
        \includegraphics[width=\textwidth]{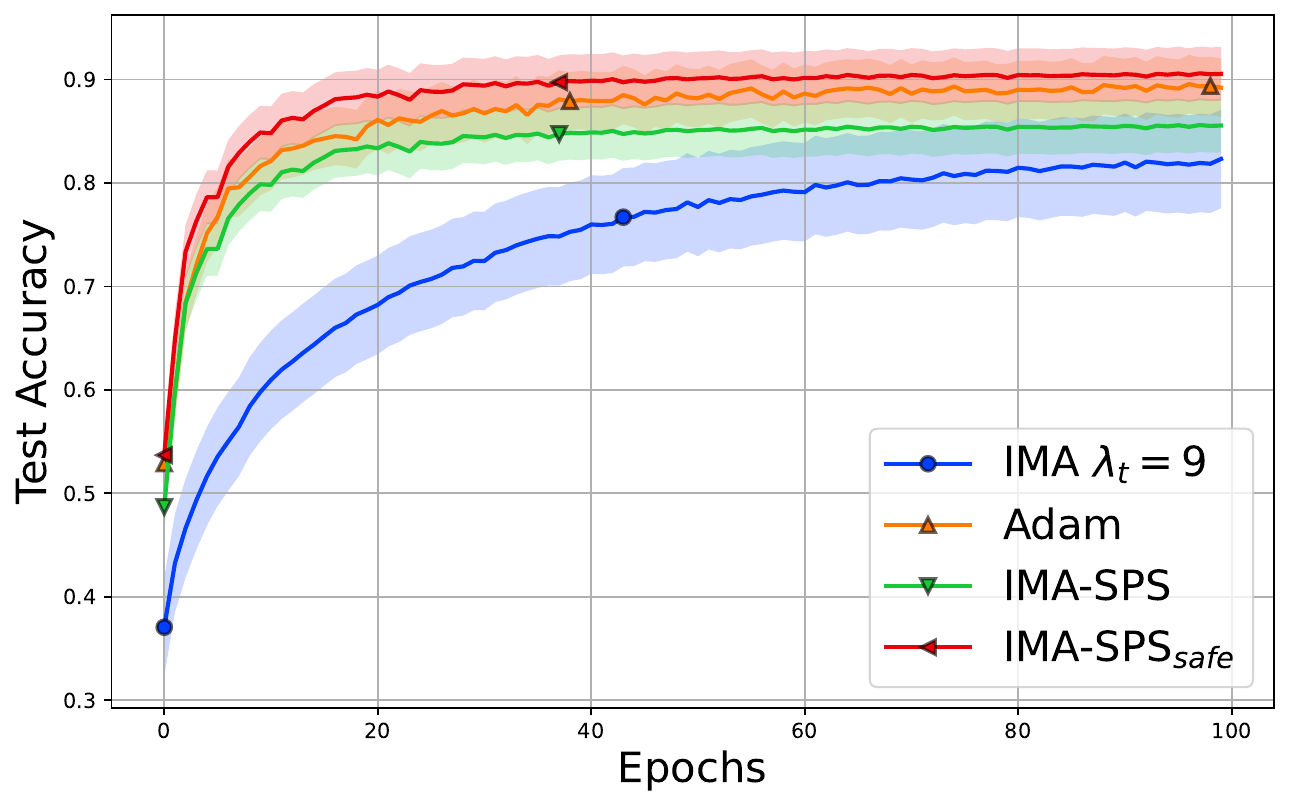}
    \end{subfigure}
    \caption{Test accuracy of ResNet32 on CIFAR-100. \textbf{Left:} \ref{eq:ssm}-based methods. \textbf{Right:} \ref{eq:ima}-based methods. }
    \label{fig:32_100}
\end{figure}

\subsubsection{Comparison of the gradient norm}

\begin{figure}[H]
	\centering
    \begin{subfigure}{0.37\columnwidth}
        \includegraphics[width=\textwidth]{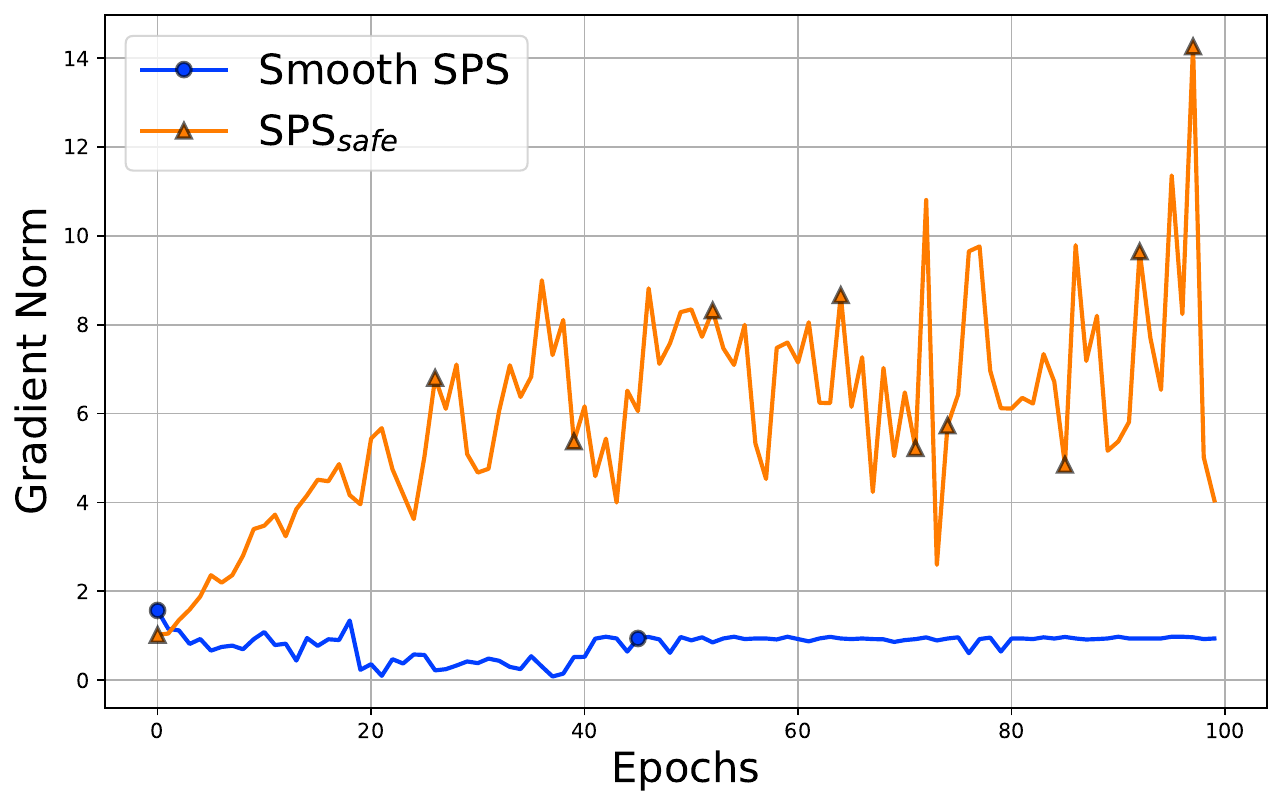}
    \end{subfigure}
    ~
    \begin{subfigure}{0.37\columnwidth}
        \includegraphics[width=\textwidth]{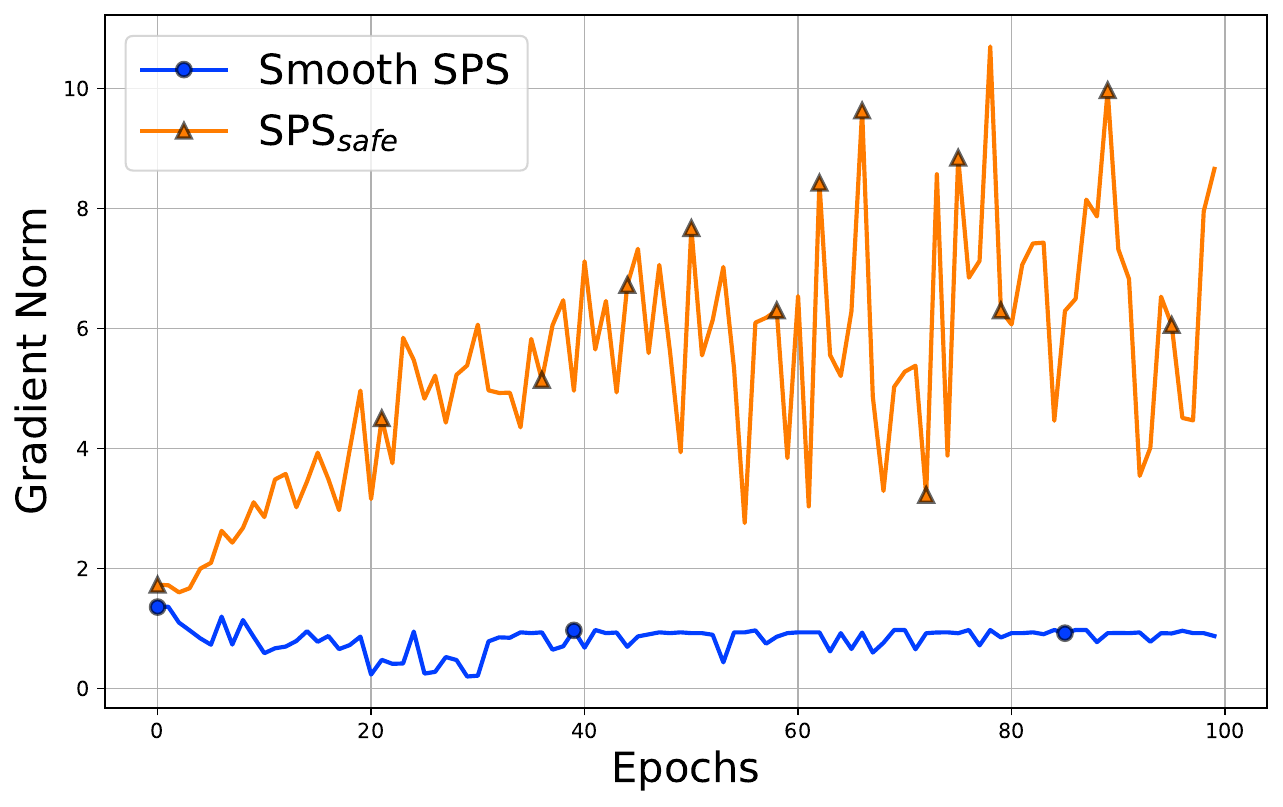}
    \end{subfigure}
    \caption{Gradient Norms during training of ResNet20. \textbf{Left:} Trained on CIFAR-10. \textbf{Right:} Trained on CIFAR-100. }
    \label{fig:grad_32}
\end{figure}

\newpage
\section{Extra Sensitivity Analysis}
\label{sec:extra_sens}

In this appendix we complement the main sensitivity study for the safeguard $M$ by providing additional experiments on both convex and deep-learning benchmarks. We systematically vary $M$ over a wide range and report the generalization performance vs the value of $M$ for SSM and IMA variants. These plots illustrate that choosing $M=1.0$ works well in practice. 

\subsection{Convex}

\begin{figure}[H]
	\centering
    \begin{subfigure}{0.37\columnwidth}
        \includegraphics[width=\textwidth]{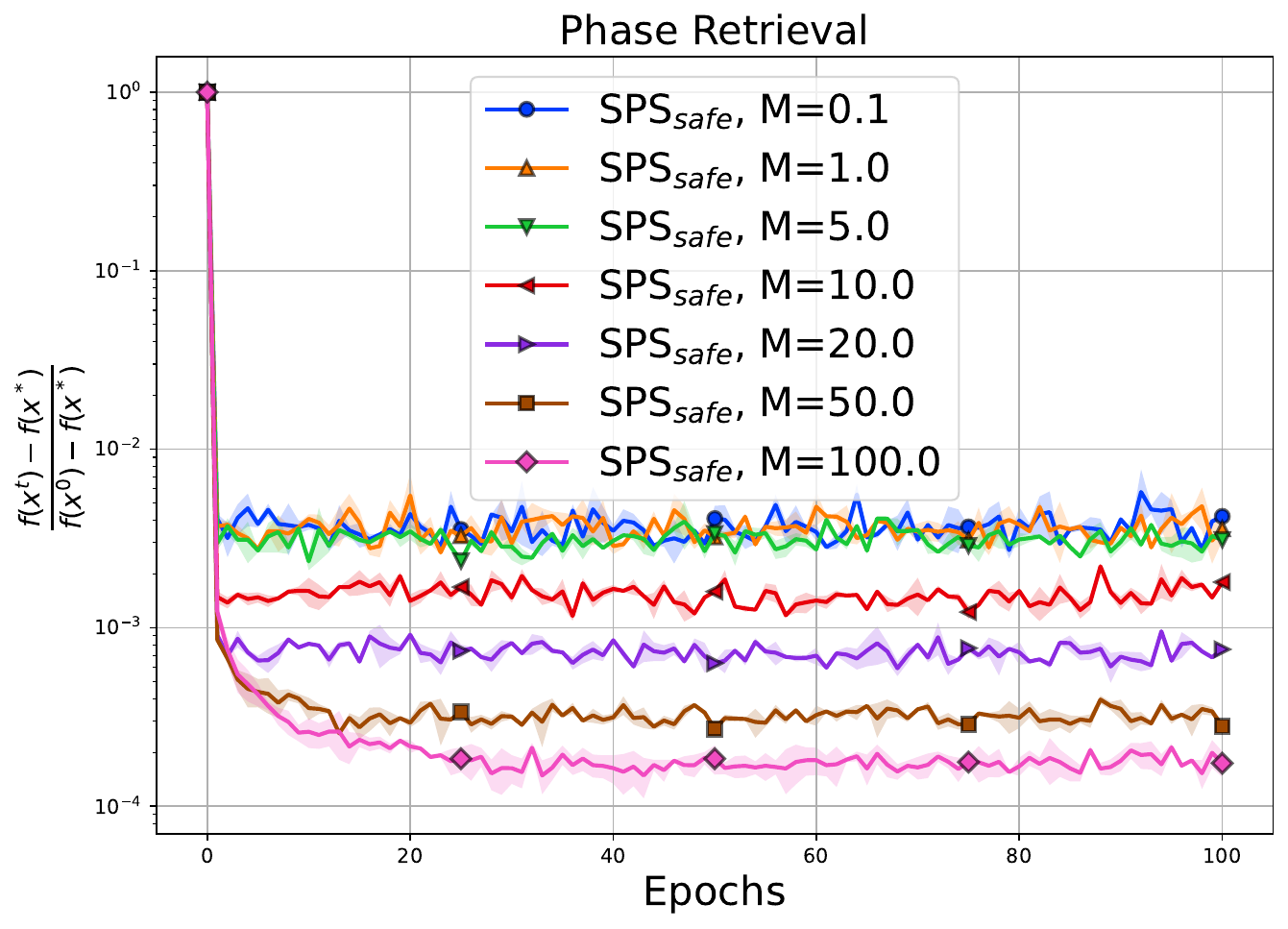}
    \end{subfigure}
    ~
    \begin{subfigure}{0.37\columnwidth}
        \includegraphics[width=\textwidth]{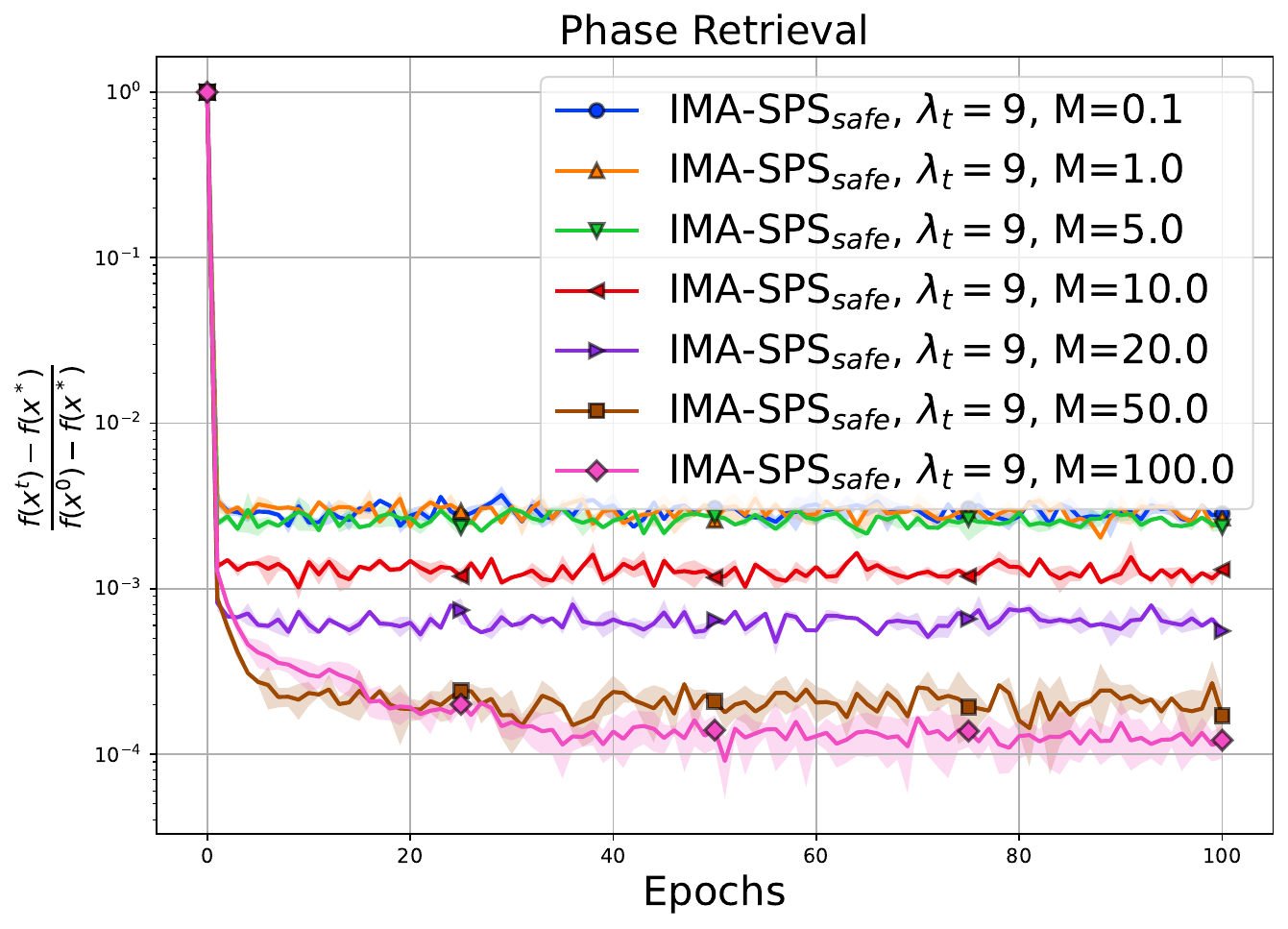}
    \end{subfigure}
    \caption{Sensitivity Analysis for various safeguards $M$ for Phase Retrieval. \textbf{Left:} SSM. \textbf{Right:} IMA}
\end{figure}

\subsection{Deep Learning}
\subsubsection{SSM}
\begin{figure}[H]
	\centering
    \begin{subfigure}{0.37\columnwidth}
        \includegraphics[width=\textwidth]{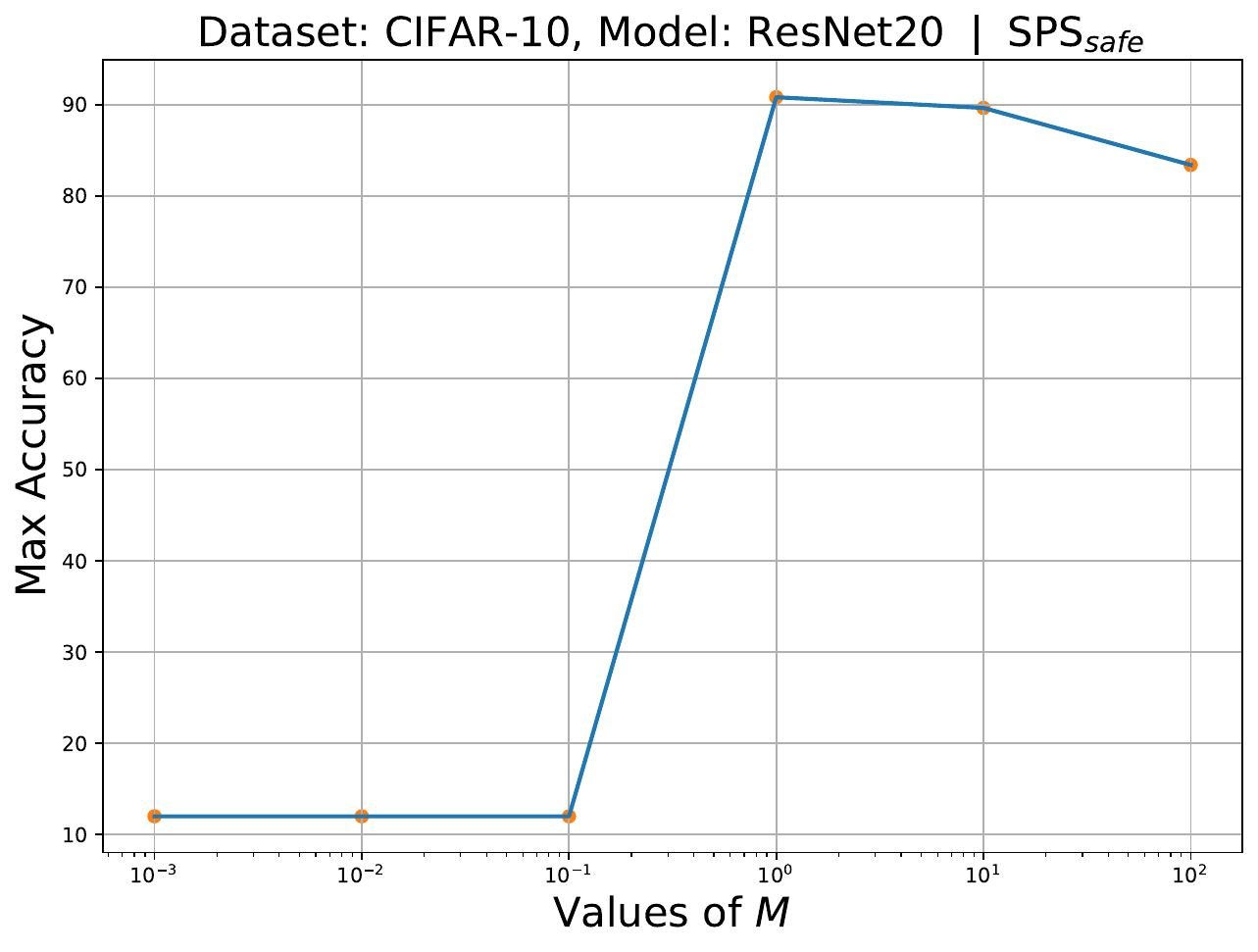}
    \end{subfigure}
    ~
    \begin{subfigure}{0.37\columnwidth}
        \includegraphics[width=\textwidth]{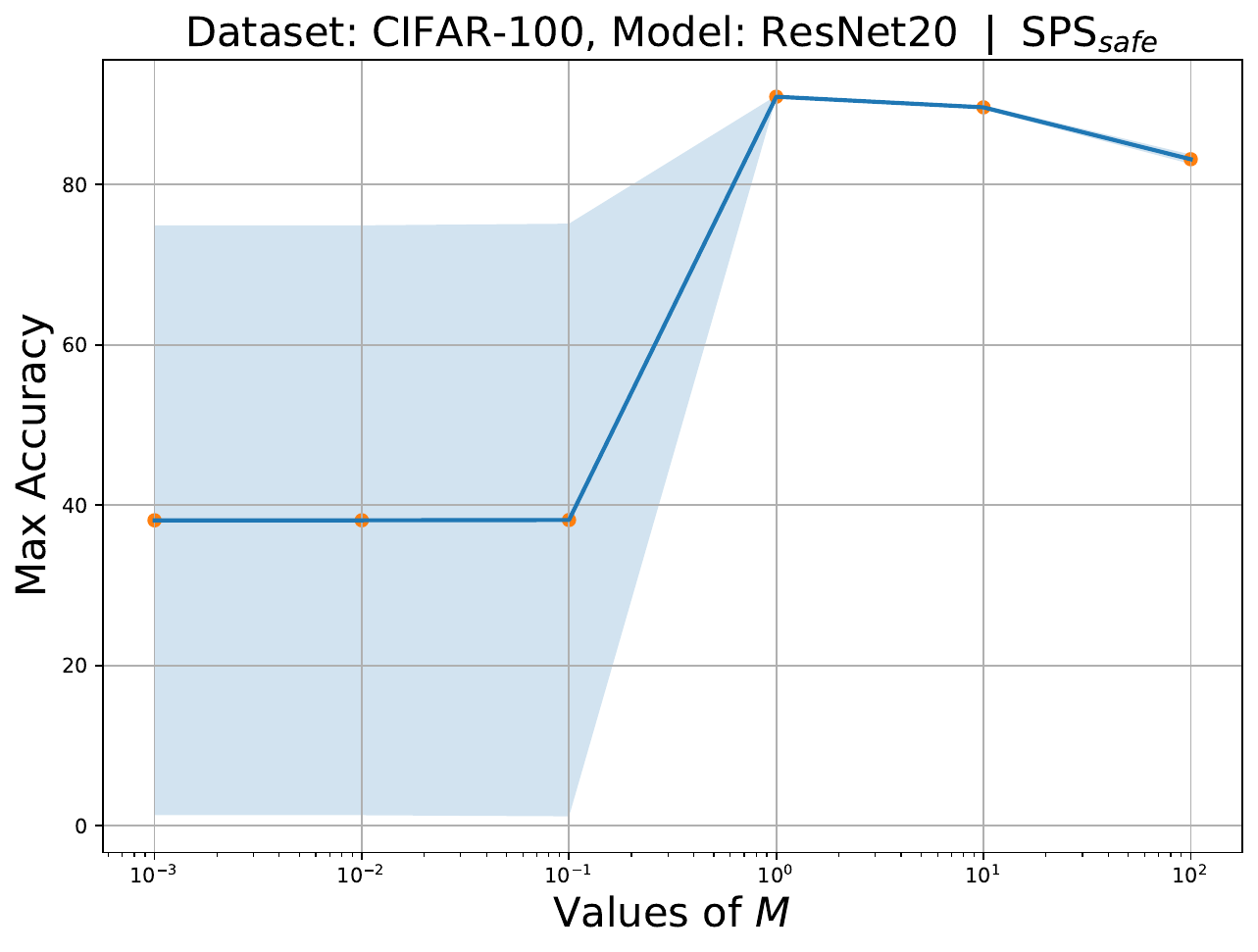}
    \end{subfigure}
    \caption{Sensitivity Analysis for various safeguards $M$ for ResNet20. \textbf{Left:} Trained on CIFAR-10. \textbf{Right:} Trained on CIFAR-100. }
\end{figure}

\begin{figure}[H]
	\centering
    \begin{subfigure}{0.37\columnwidth}
        \includegraphics[width=\textwidth]{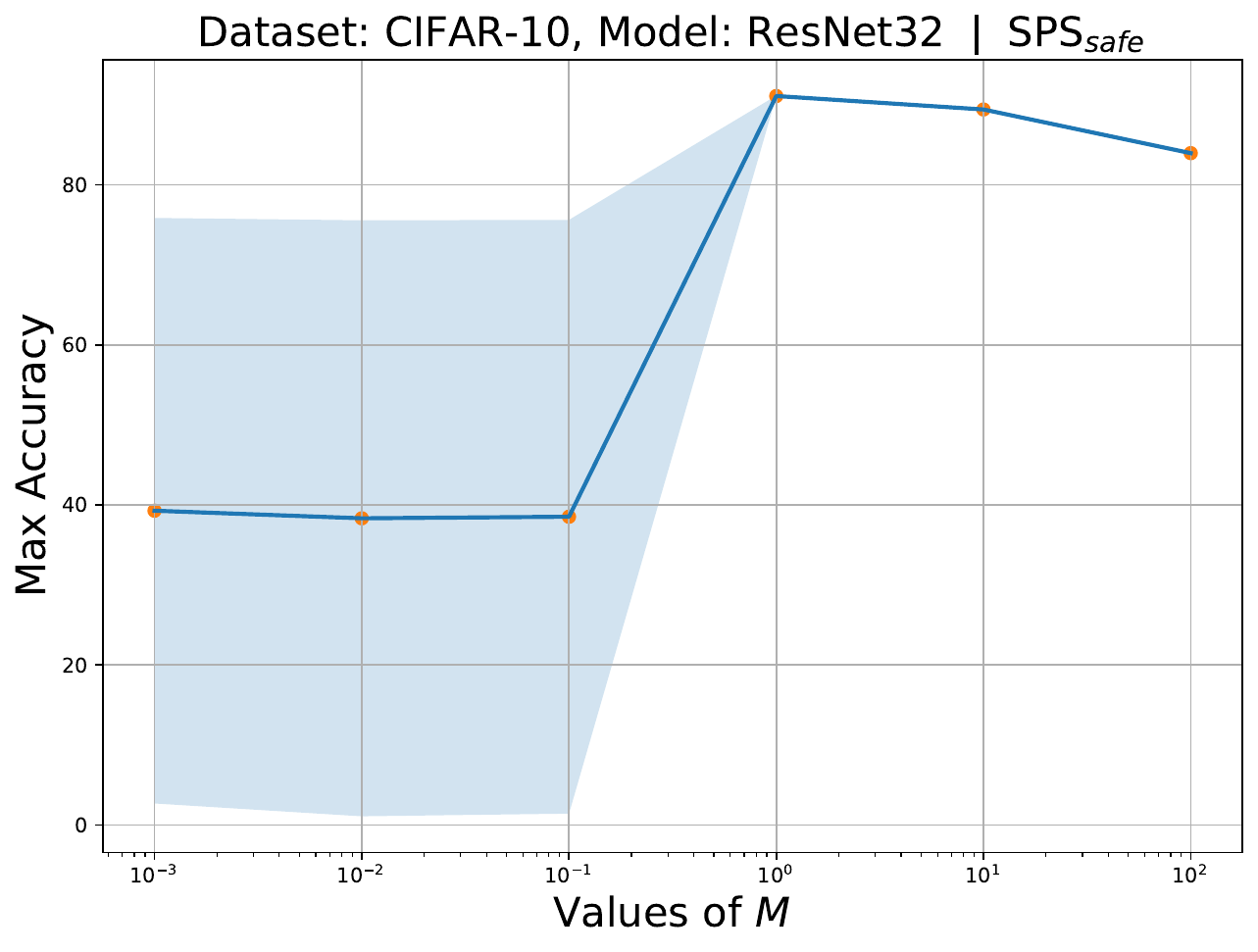}
    \end{subfigure}
    ~
    \begin{subfigure}{0.37\columnwidth}
        \includegraphics[width=\textwidth]{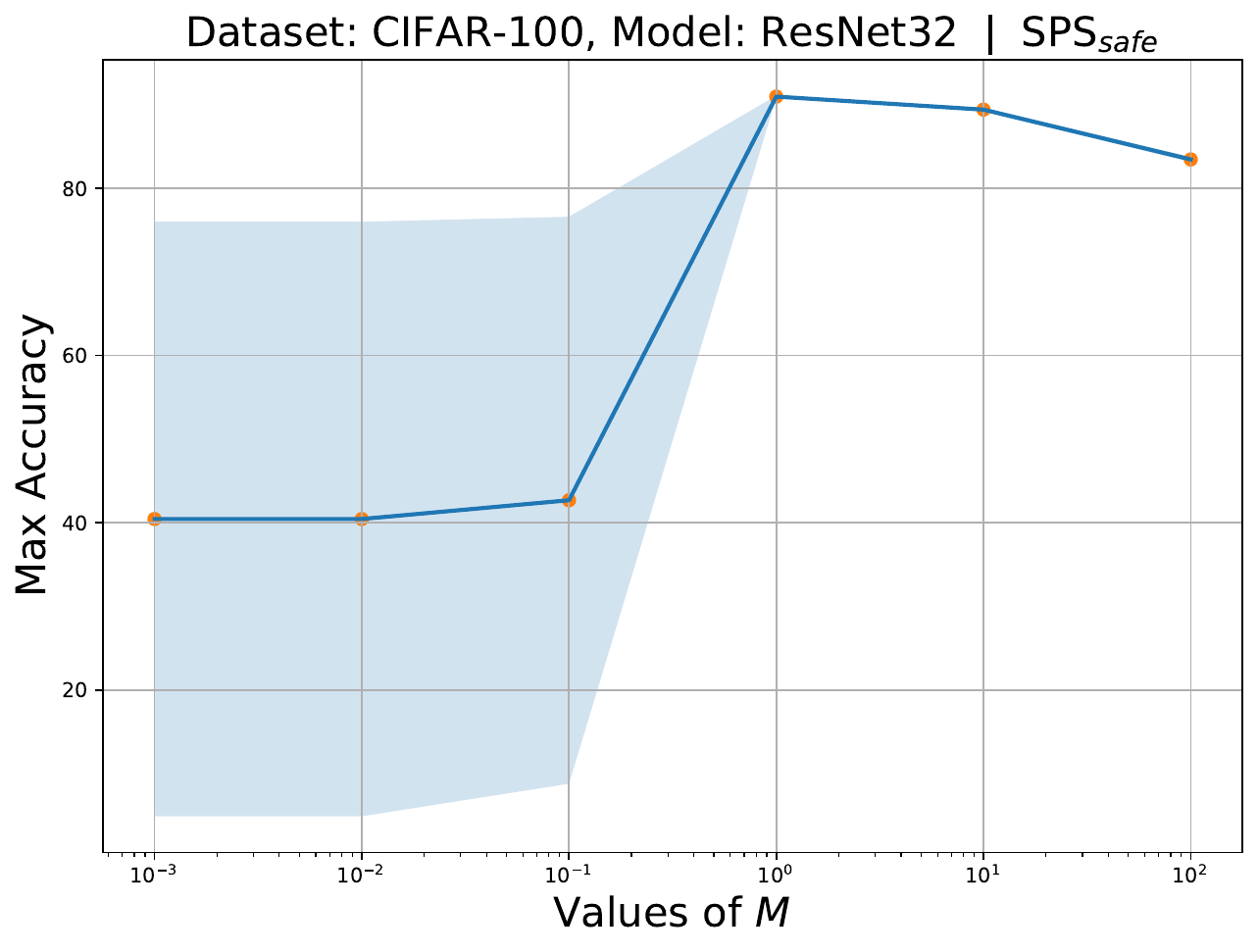}
    \end{subfigure}
    \caption{Sensitivity Analysis for various safeguards $M$ for ResNet32. \textbf{Left:} Trained on CIFAR-10. \textbf{Right:} Trained on CIFAR-100. }
\end{figure}

\begin{figure}[H]
	\centering
    \begin{subfigure}{0.37\columnwidth}
        \includegraphics[width=\textwidth]{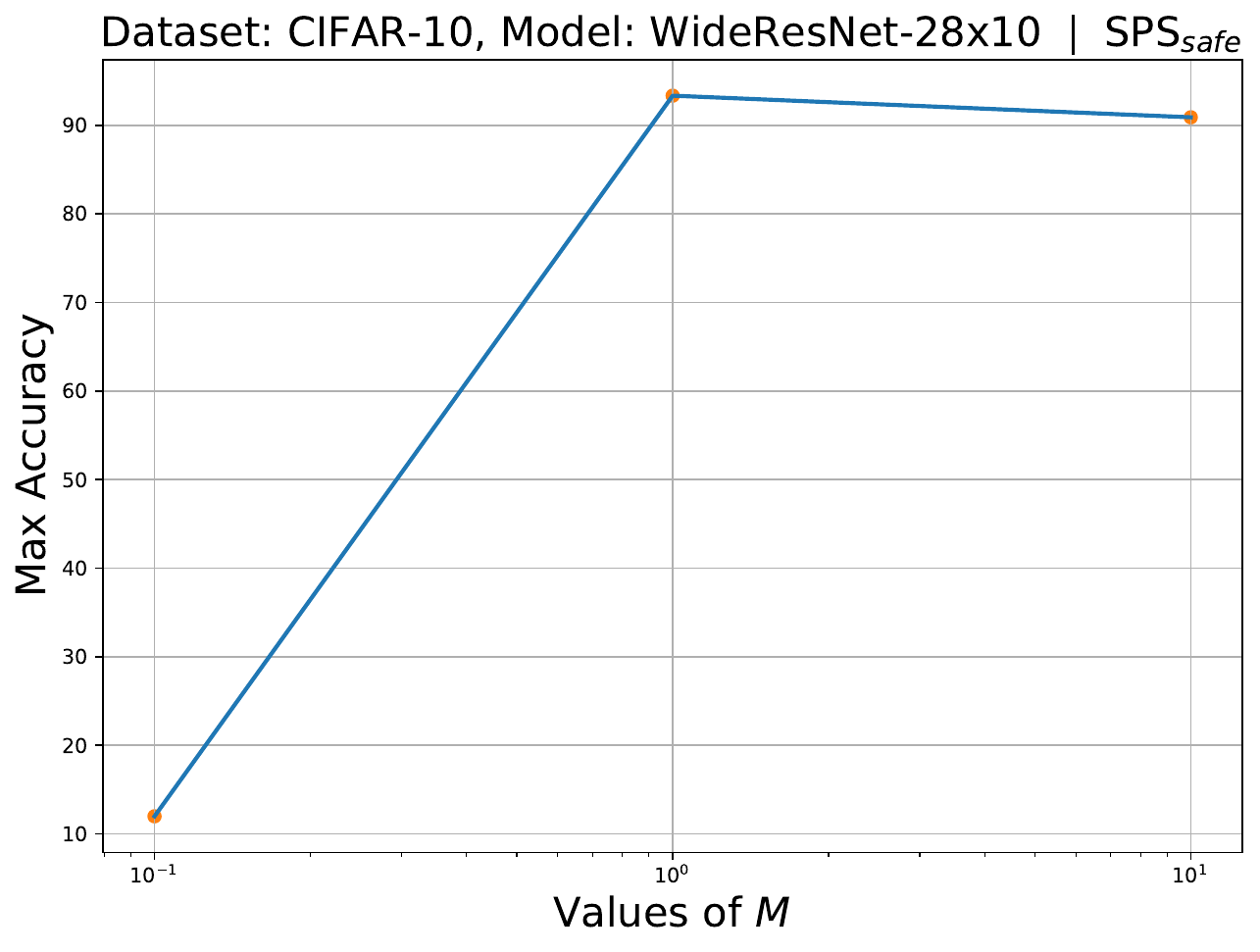}
    \end{subfigure}
    ~
    \begin{subfigure}{0.37\columnwidth}
        \includegraphics[width=\textwidth]{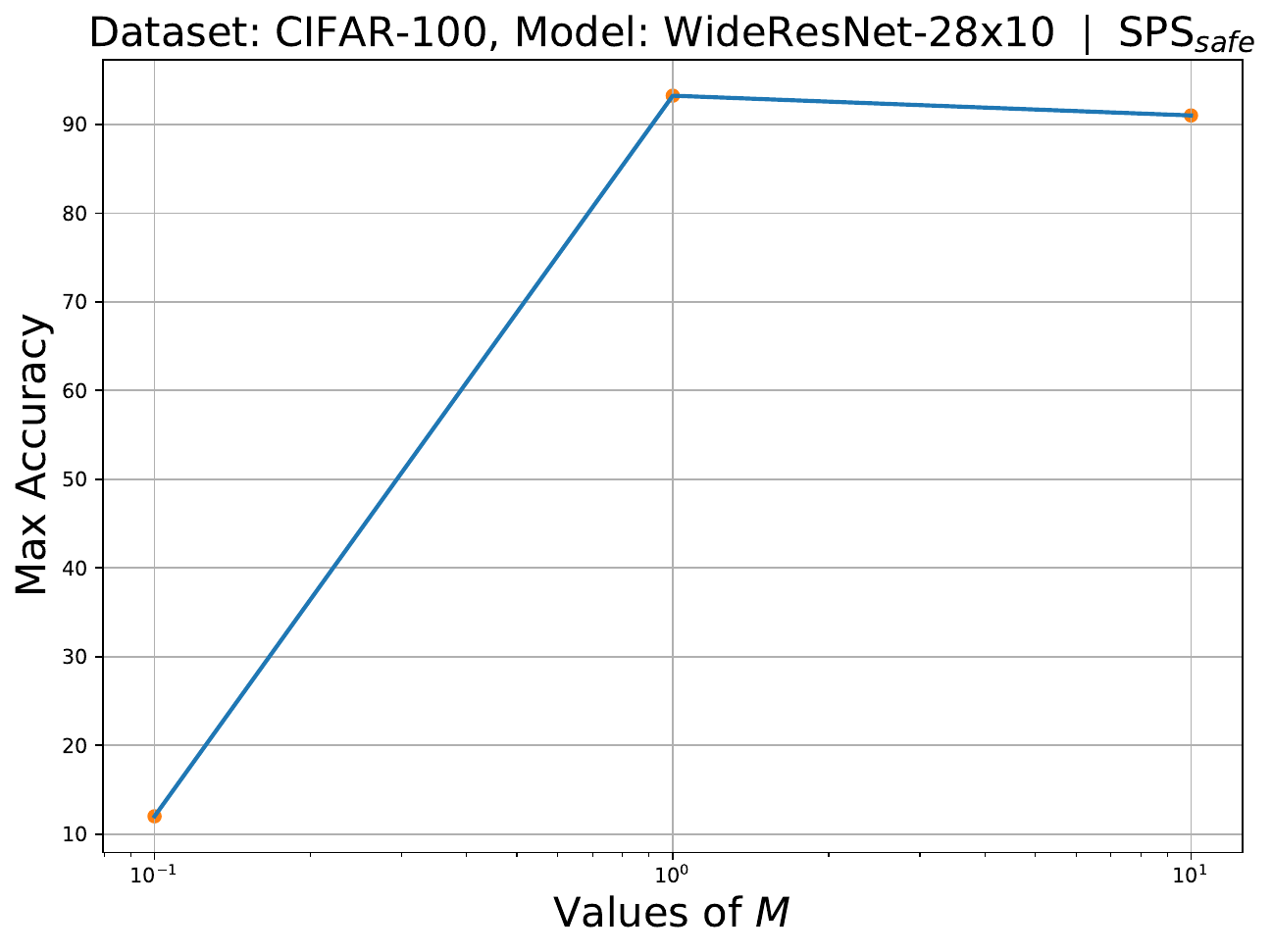}
    \end{subfigure}
    \caption{Sensitivity Analysis for various safeguards $M$ for WideResNet 28x10. \textbf{Left:} Trained on CIFAR-10. \textbf{Right:} Trained on CIFAR-100. }
\end{figure}

\subsubsection{IMA}
\begin{figure}[H]
	\centering
    \begin{subfigure}{0.37\columnwidth}
        \includegraphics[width=\textwidth]{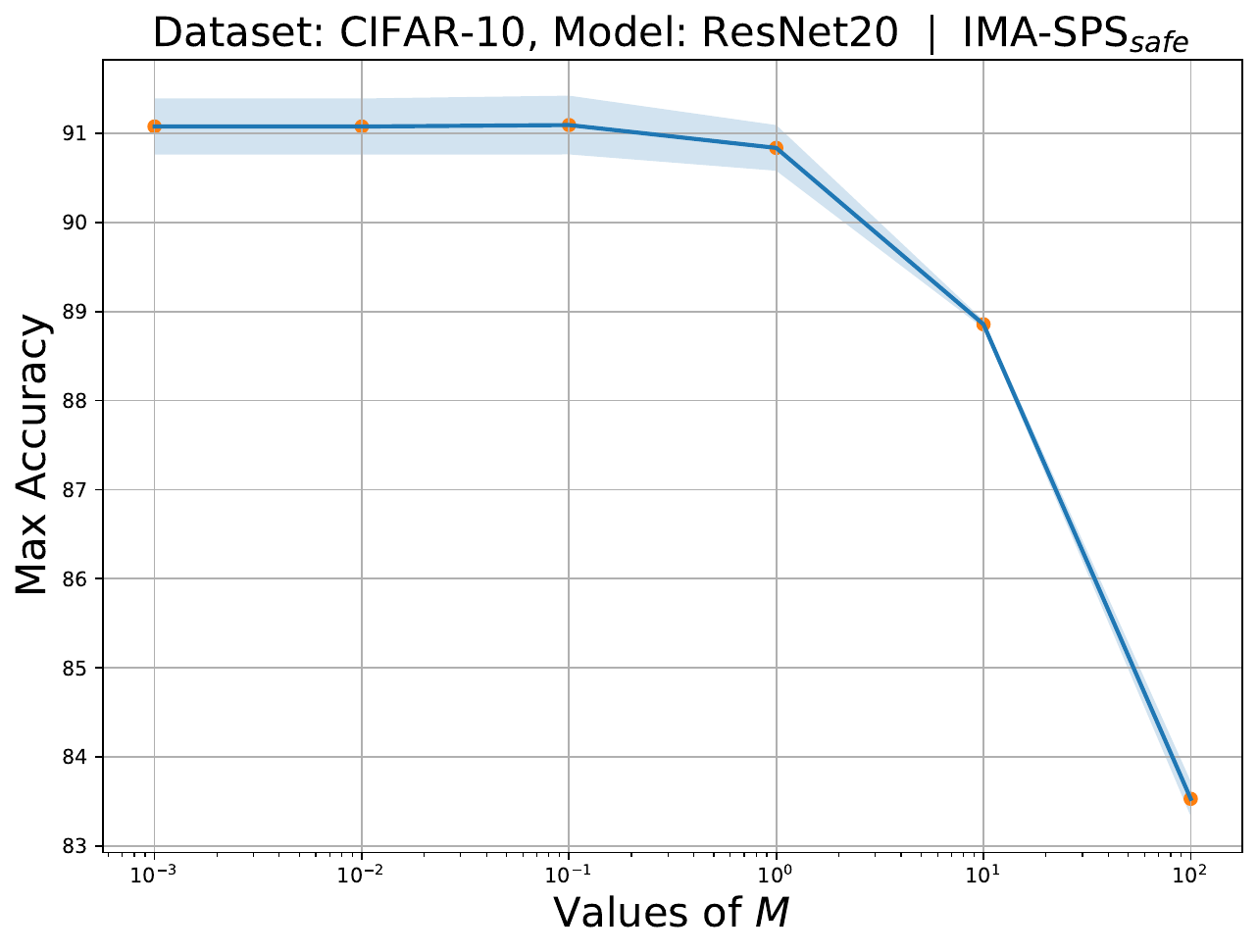}
    \end{subfigure}
    ~
    \begin{subfigure}{0.37\columnwidth}
        \includegraphics[width=\textwidth]{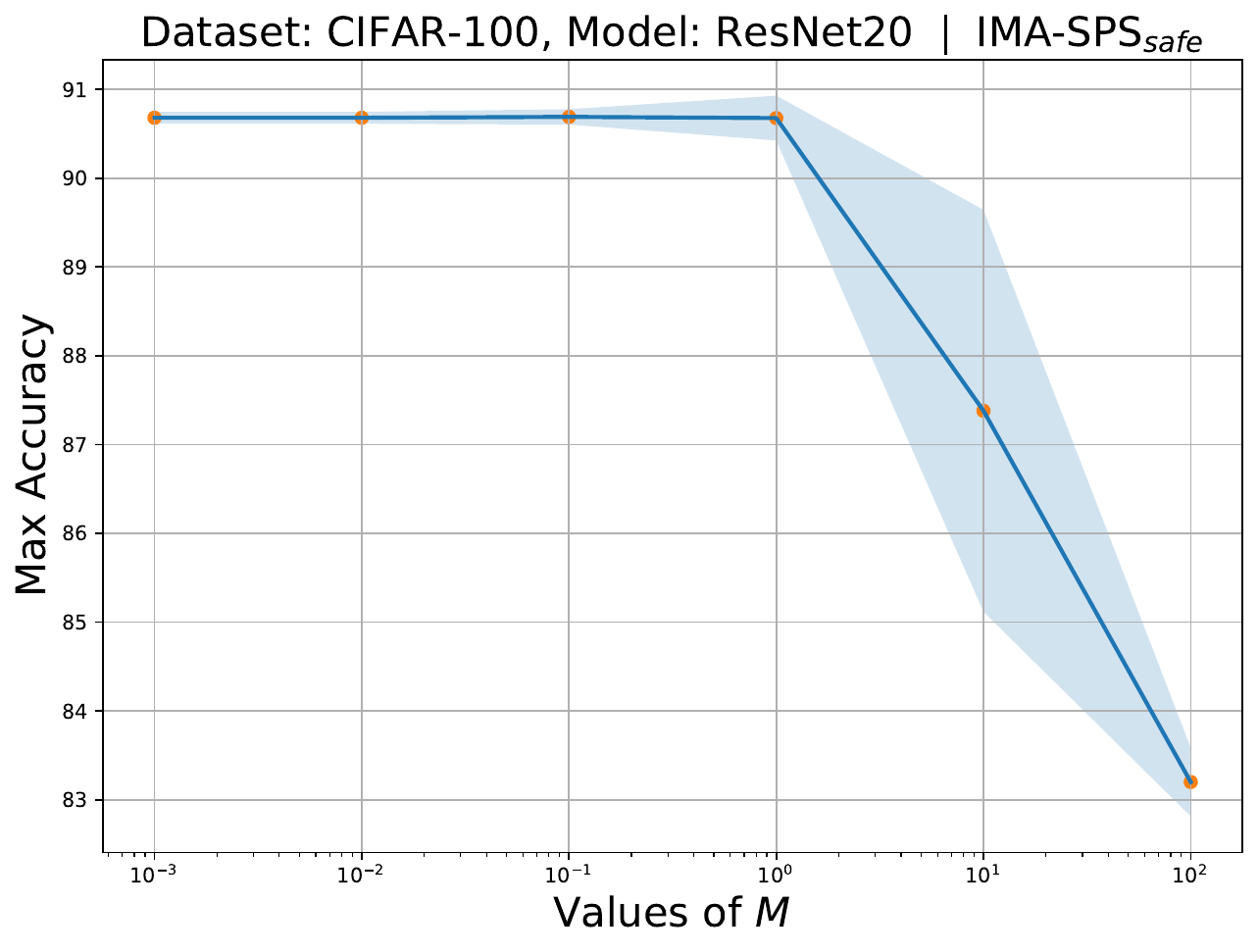}
    \end{subfigure}
    \caption{Sensitivity Analysis for various safeguards $M$ for ResNet20. \textbf{Left:} Trained on CIFAR-10. \textbf{Right:} Trained on CIFAR-100. }
\end{figure}

\begin{figure}[H]
	\centering
    \begin{subfigure}{0.37\columnwidth}
        \includegraphics[width=\textwidth]{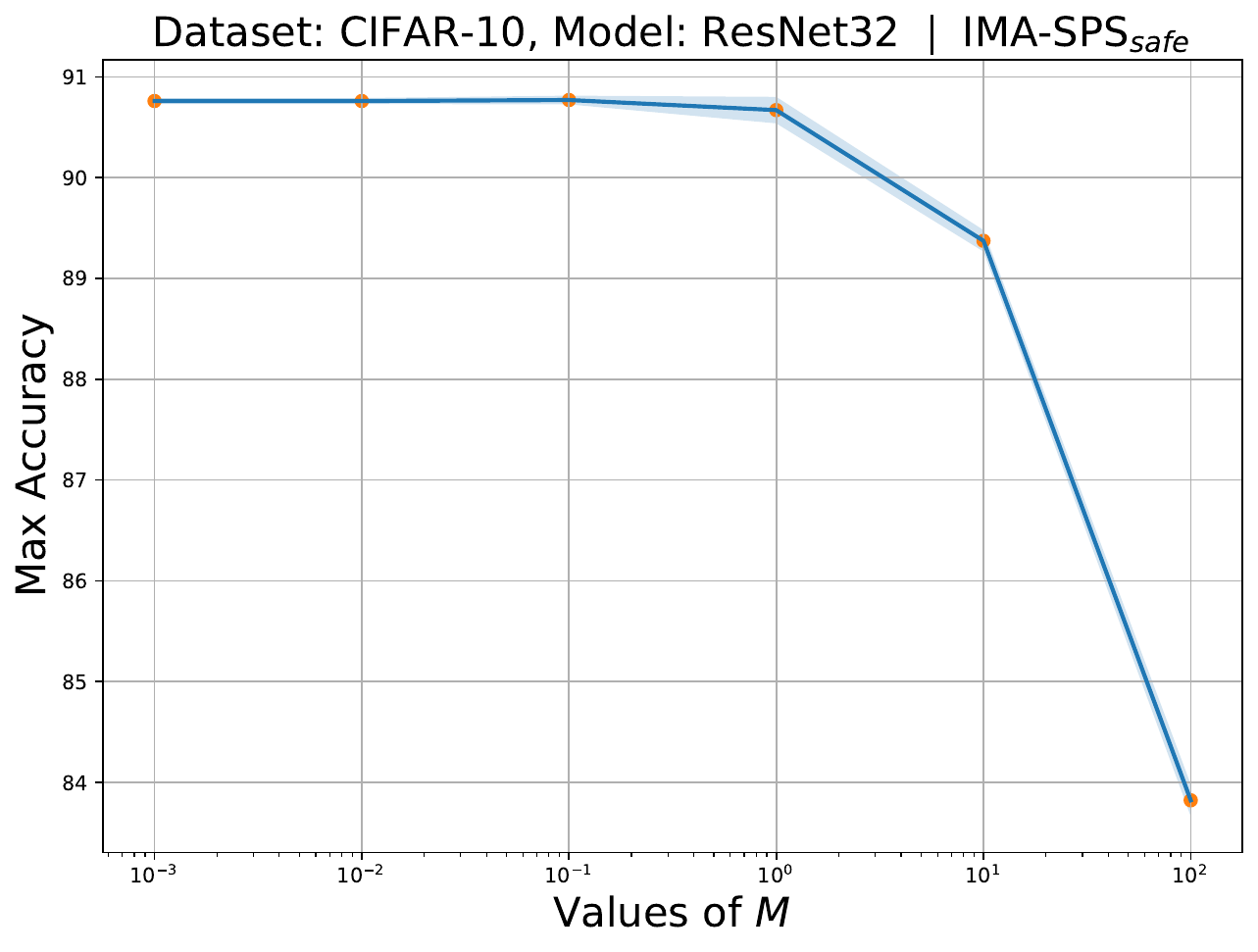}
    \end{subfigure}
    ~
    \begin{subfigure}{0.37\columnwidth}
        \includegraphics[width=\textwidth]{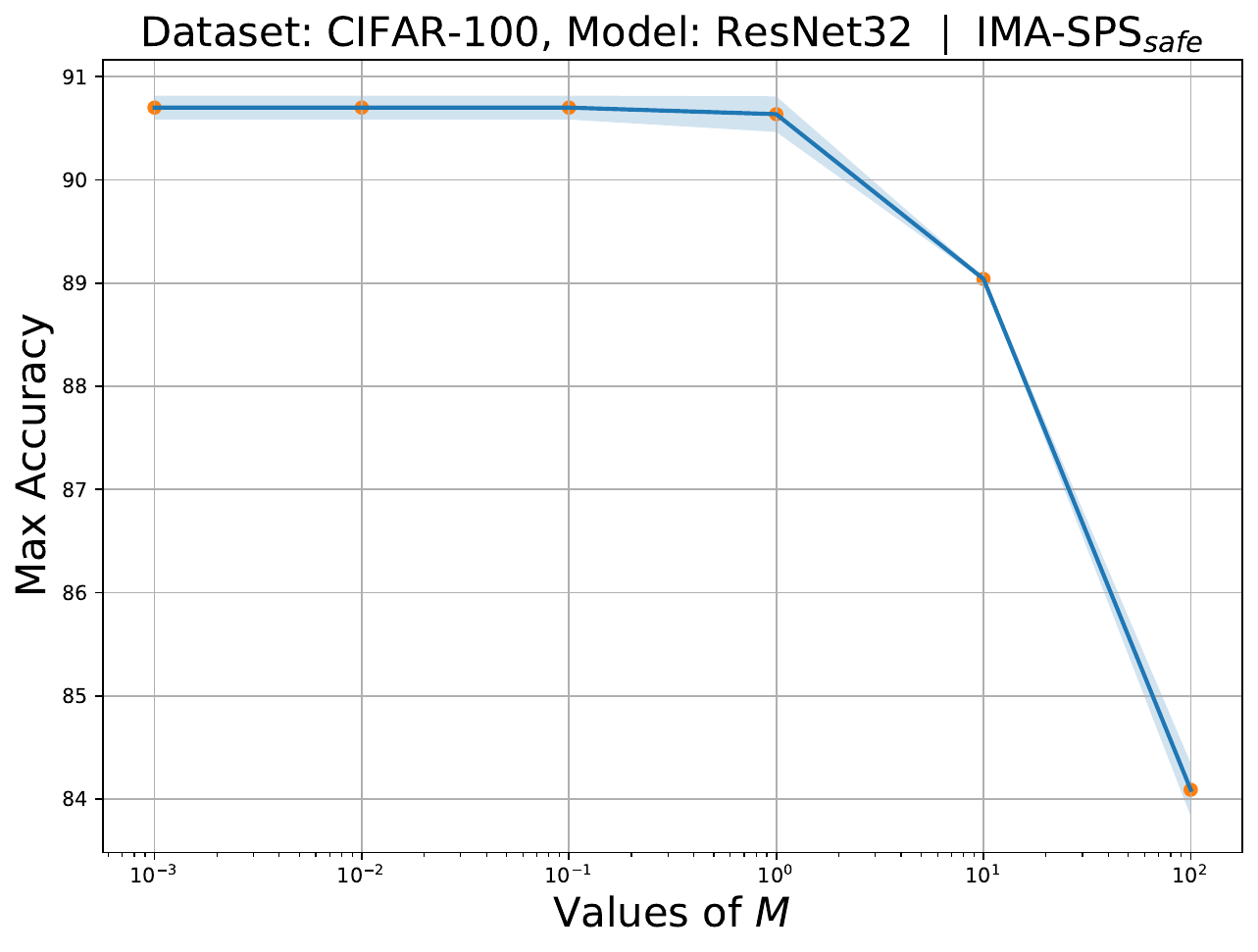}
    \end{subfigure}
    \caption{Sensitivity Analysis for various safeguards $M$ for ResNet32. \textbf{Left:} Trained on CIFAR-10. \textbf{Right:} Trained on CIFAR-100. }
\end{figure}

\begin{figure}[H]
	\centering
    \begin{subfigure}{0.37\columnwidth}
        \includegraphics[width=\textwidth]{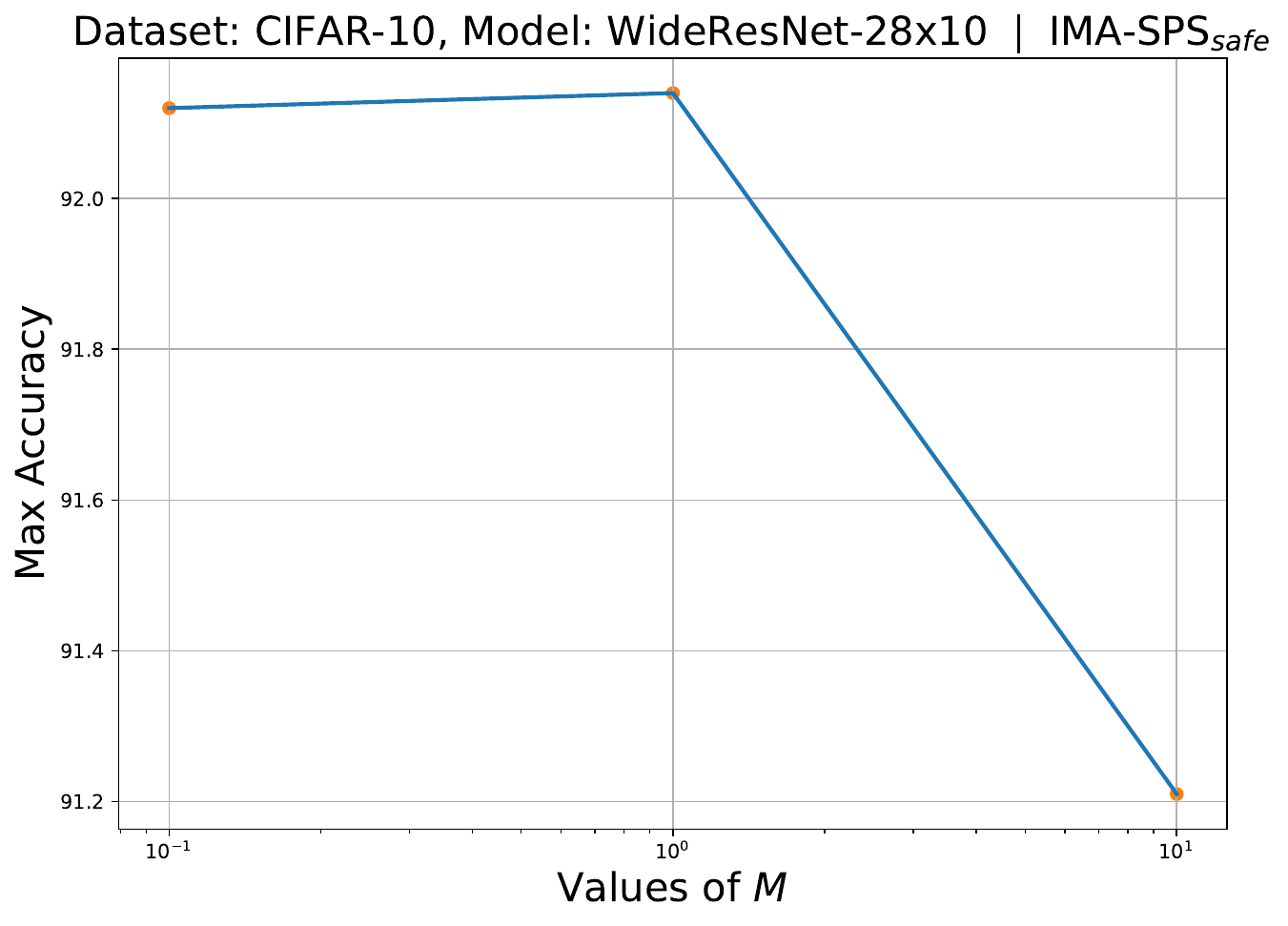}
    \end{subfigure}
    ~
    \begin{subfigure}{0.37\columnwidth}
        \includegraphics[width=\textwidth]{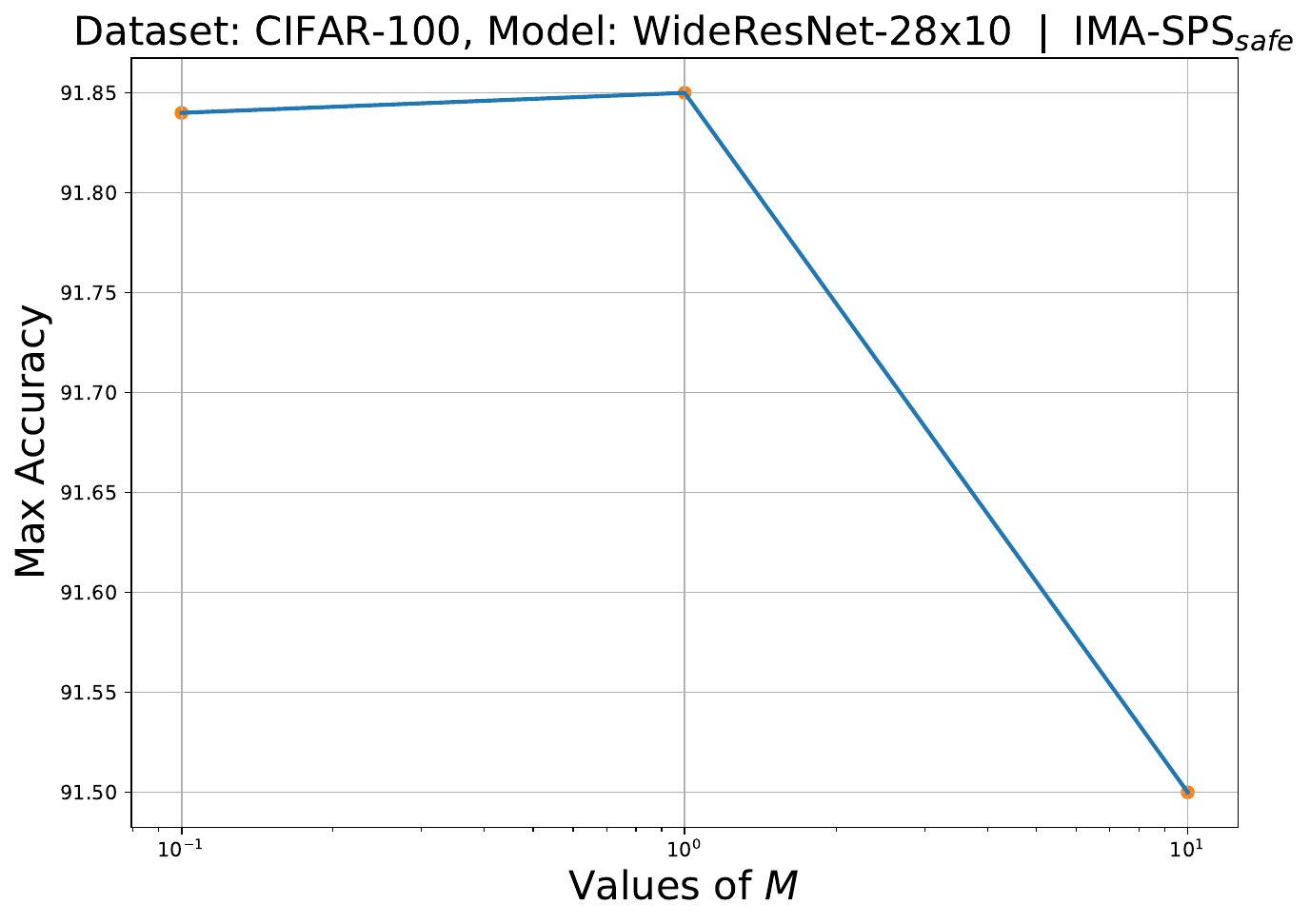}
    \end{subfigure}
    \caption{Sensitivity Analysis for various safeguards $M$ for WideResNet 28x10. \textbf{Left:} Trained on CIFAR-10. \textbf{Right:} Trained on CIFAR-100. }
\end{figure}

\newpage
\section{Smoothing trick for $M$}
\label{sec:smoothing_trick}

Motivated by the sensitivity analyses above, in this appendix, we investigate a simple smoothing strategy that replaces the fixed safeguard $M$ with an exponential moving average $M_t$ of past squared gradients. This adaptive rule is designed to reduce manual tuning while preserving the stabilizing effect of the safeguard. We present the corresponding update, provide a practical recommendation for the smoothing parameter $\beta$, and compare the resulting $M_t$ against the best-tuned fixed $M$ on CIFAR-10/100 and several architectures.

The \ref{eq:sgd-sps-M} takes the following form:
\begin{align*}
    \g_t&=\frac{f_i(x^t)-\ell_i^*}{\max\{\|g_i^t\|^2,M_t\}}\\
    M_t&=\b M_{t-1}+(1-\b)\|g_i^t\|^2,
\end{align*}
with $M_0=\|g_i^0\|^2$. For a good practical performance we recommend $\b=0.9$.

\begin{table}[H]
    \caption{Comparison of test accuracy of tuned $M$ vs Smooth $M_t$ for various model on CIFAR10.}
    \centering
    \begin{tabular}{c|cc}
        \textbf{Model} & \textbf{Best} $M$ & \textbf{Smooth} $M_t$ ($\b=0.9$) \\
        \hline
        ResNet20 & $90.84_{\pm 0.17}$ & $\boldsymbol{90.97}_{\pm 0.14}$ \\
        ResNet32 & $90.80_{\pm 0.04}$ & $\boldsymbol{90.94}_{\pm 0.13}$ \\
        WideResNet-28x10 & $\boldsymbol{93.03}$ & $92.99$ \\
    \end{tabular}
\end{table}

\begin{table}[H]
    \caption{Comparison of test accuracy of tuned $M$ vs Smooth $M_t$ for various model on CIFAR100.}
    \centering
    \begin{tabular}{c|cc}
        \textbf{Model} & \textbf{Best} $M$ & \textbf{Smooth} $M_t$ ($\b=0.9$) \\
        \hline
        ResNet20 & $90.86_{\pm 0.12}$ & $\boldsymbol{90.93}_{\pm 0.16}$ \\
        ResNet32 & $90.97_{\pm 0.11}$ & $\boldsymbol{91.11}_{\pm 0.28}$ \\
        WideResNet-28x10 & $\boldsymbol{93.24}$ & $93.22$ \\
    \end{tabular}
\end{table}

\subsection{Convergence guarantees for a time-varying safeguard}
\label{sec:adaptive_safeguard}

The convergence analysis of \Cref{thm:sgd-M-lip} extends to a \emph{time-varying} safeguard $M_t$ without changing the proof structure, provided $M_t$ stays uniformly bounded away from zero and from above. This endows smoothed safeguards, such as the exponential moving average above, with convergence guarantees, and is a first step towards a theory for adaptive safeguards.

\begin{theorem}[Time-varying safeguarded SPS]
\label{thm:sgd-variable-M}
Consider the iterates of \ref{eq:ssm} with step size
\begin{align*}
\g_t = \frac{f_i(x^t)-\ell_i^*}{\max\{\|g_i^t\|^2,M_t\}},
\end{align*}
where $g_i^t\in \partial f_i(x^t)$, and where $M_t$ may be random and may depend on the current sample. Assume there exist deterministic constants $0<m\leq M_{\max}$ such that
\begin{align*}
m\leq M_t\leq M_{\max}\qquad\text{almost surely for all }t.
\end{align*}
Then, for $\overline{x}^T = \frac{1}{T}\sum_{t=0}^{T-1}x^t$,
\begin{align*}
\E[f(\overline{x}^T)-f(x^*)]
\leq
\frac{\sqrt{\max\{G^2,M_{\max}\}}\|x^0-x^*\|}{\sqrt{T}}
+
\sqrt{\frac{\max\{G^2,M_{\max}\}}{m}}\,\s^2.
\end{align*}
\end{theorem}

\begin{proof}
Using the update $x^{t+1}=x^t-\g_t g_i^t$, convexity of $f_i$, and the definition of $\g_t$, we obtain
\begin{align*}
\|x^{t+1}-x^*\|^2-\|x^t-x^*\|^2
&=-2\g_t\langle g_i^t,x^t-x^*\rangle+\g_t^2\|g_i^t\|^2\\
&\leq -2\g_t\bigl(f_i(x^t)-f_i(x^*)\bigr)+\g_t^2\|g_i^t\|^2\\
&=
-\frac{2(f_i(x^t)-\ell_i^*)(f_i(x^t)-f_i(x^*))}{\max\{\|g_i^t\|^2,M_t\}}
+\frac{(f_i(x^t)-\ell_i^*)^2}{(\max\{\|g_i^t\|^2,M_t\})^2}\|g_i^t\|^2\\
&\leq
-\frac{2(f_i(x^t)-\ell_i^*)(f_i(x^t)-f_i(x^*))}{\max\{\|g_i^t\|^2,M_t\}}
+\frac{(f_i(x^t)-\ell_i^*)^2}{\max\{\|g_i^t\|^2,M_t\}},
\end{align*}
where the last step uses $\|g_i^t\|^2\leq \max\{\|g_i^t\|^2,M_t\}$. Now use the identity $-2ab+a^2=(a-b)^2-b^2$ with $a=f_i(x^t)-\ell_i^*$ and $b=f_i(x^t)-f_i(x^*)$. Since $a-b=f_i(x^*)-\ell_i^*$, the previous inequality becomes
\begin{align*}
\|x^{t+1}-x^*\|^2-\|x^t-x^*\|^2
\leq
\frac{(f_i(x^*)-\ell_i^*)^2-(f_i(x^t)-f_i(x^*))^2}{\max\{\|g_i^t\|^2,M_t\}}.
\end{align*}
Because $\max\{\|g_i^t\|^2,M_t\}\geq m$ and $\max\{\|g_i^t\|^2,M_t\}\leq \max\{G^2,M_{\max}\}$ (using $\|g_i^t\|\leq G$ and $M_t\leq M_{\max}$), we further get
\begin{align*}
\|x^{t+1}-x^*\|^2-\|x^t-x^*\|^2
\leq
-\frac{(f_i(x^t)-f_i(x^*))^2}{\max\{G^2,M_{\max}\}}
+\frac{(f_i(x^*)-\ell_i^*)^2}{m}.
\end{align*}
Taking expectation $\E_i$ conditional on $x^t$, and using Jensen's inequality $\E_i[(f_i(x^t)-f_i(x^*))^2\mid x^t]\geq(f(x^t)-f(x^*))^2$, gives
\begin{align*}
\E_i[\|x^{t+1}-x^*\|^2\mid x^t]-\|x^t-x^*\|^2
\leq
-\frac{(f(x^t)-f(x^*))^2}{\max\{G^2,M_{\max}\}}
+\frac{\s^4}{m}.
\end{align*}
Taking total expectation and rearranging yields
\begin{align*}
\E[f(x^t)-f(x^*)]^2
\leq
\max\{G^2,M_{\max}\}\,\E\|x^t-x^*\|^2
-\max\{G^2,M_{\max}\}\,\E\|x^{t+1}-x^*\|^2
+\frac{\max\{G^2,M_{\max}\}}{m}\s^4.
\end{align*}
Summing from $t=0$ to $T-1$ and telescoping gives
\begin{align*}
\frac{1}{T}\sum_{t=0}^{T-1}\E[f(x^t)-f(x^*)]^2
\leq
\frac{\max\{G^2,M_{\max}\}\|x^0-x^*\|^2}{T}
+\frac{\max\{G^2,M_{\max}\}}{m}\s^4.
\end{align*}
Finally, convexity of $f$ and Jensen's inequality imply
\begin{align*}
\E[f(\overline{x}^T)-f(x^*)]
\leq
\frac{1}{T}\sum_{t=0}^{T-1}\E[f(x^t)-f(x^*)]
\leq
\sqrt{\frac{1}{T}\sum_{t=0}^{T-1}\E[f(x^t)-f(x^*)]^2}.
\end{align*}
Combining the last two displays and using $\sqrt{a+b}\leq \sqrt{a}+\sqrt{b}$ proves the result.
\end{proof}

\Cref{thm:sgd-variable-M} shows that the $\mathcal{O}(T^{-1/2})$ rate of \Cref{thm:sgd-M-lip} is preserved for \emph{any} safeguard sequence $M_t$ that is confined to a fixed interval $[m,M_{\max}]$, with the neighborhood now controlled by the floor $m$ and the ceiling $M_{\max}$. In particular, it applies to smoothed safeguards once a positive floor is enforced.

\begin{corollary}[Exponential moving average safeguard with a floor]
\label{cor:ema-floor}
Assume the safeguard is updated according to
\begin{align*}
M_t = \max\left\{m,\b M_{t-1}+(1-\b)\|g_i^t\|^2\right\},
\qquad 0\leq \b<1,
\end{align*}
with $M_0\geq m>0$. Then, almost surely, $m\leq M_t\leq \max\{M_0,G^2\}$ for all $t$, and hence
\begin{align*}
\E[f(\overline{x}^T)-f(x^*)]
\leq
\frac{\sqrt{\max\{G^2,M_0\}}\|x^0-x^*\|}{\sqrt{T}}
+
\sqrt{\frac{\max\{G^2,M_0\}}{m}}\,\s^2.
\end{align*}
\end{corollary}

\begin{proof}
The lower bound $M_t\geq m$ is immediate from the definition. For the upper bound, set $U:=\max\{M_0,G^2\}$. We prove by induction that $M_t\leq U$ for all $t$. The claim holds at $t=0$ since $M_0\leq U$. If $M_{t-1}\leq U$, then using $\|g_i^t\|\leq G$ we have
\begin{align*}
\b M_{t-1}+(1-\b)\|g_i^t\|^2
\leq
\b U+(1-\b)G^2
\leq
U.
\end{align*}
Taking the maximum with $m$ does not increase the quantity above $U$, since $m\leq U$. Hence $M_t\leq U$, and the claim follows by induction. \Cref{thm:sgd-variable-M} then applies with $M_{\max}=U$.
\end{proof}

The exponential moving average used in practice at the beginning of this appendix corresponds to $\b=0.9$ \emph{without} an explicit floor. The induction in the proof above shows that it is still provably upper bounded by $\max\{M_0,G^2\}$; adding any positive floor $m$ (for instance a small constant, or $m=M_0$) makes \Cref{cor:ema-floor} directly applicable and yields a convergence guarantee for the smoothed safeguard. Establishing exact convergence (rather than convergence to a neighborhood) for a fully adaptive, floor-free safeguard remains an interesting open question.

\end{document}